\documentclass[11pt]{article}

\usepackage[T1]{fontenc}
\usepackage{mathtools, amssymb, amsthm}
\usepackage[margin=2.5cm]{geometry}
\usepackage[british]{babel}
\usepackage{microtype}
\usepackage{enumitem}
\usepackage{xcolor}
\usepackage[hidelinks]{hyperref}

\title{Entropic optimal transport need not select a zero-temperature limit}
\author{Maja Gw\'o\'zd\'z\\
ETH Z\"{u}rich\\
\texttt{mgwozdz@ethz.ch}}
\date{}

\theoremstyle{plain}
\newtheorem{theorem}{Theorem}[section]
\newtheorem{lemma}[theorem]{Lemma}
\newtheorem{proposition}[theorem]{Proposition}
\newtheorem{corollary}[theorem]{Corollary}
\newtheorem{claim}[theorem]{Claim}

\theoremstyle{definition}
\newtheorem{definition}[theorem]{Definition}
\theoremstyle{remark}
\newtheorem{remark}[theorem]{Remark}
\numberwithin{equation}{section}

\newcommand{\N}{\mathbb{N}}
\newcommand{\R}{\mathbb{R}}
\newcommand{\Id}{\mathrm{Id}}
\newcommand{\pr}{\mathrm{pr}}
\newcommand{\dd}{\,\mathrm{d}}

\DeclareMathOperator{\Clust}{Clust}

\providecommand{\onevec}{\mathbf{1}}
\providecommand{\ind}[1]{\onevec_{\{#1\}}}
\providecommand{\diag}{\operatorname{diag}}
\renewcommand{\epsilon}{\ensuremath\varepsilon}
\renewcommand{\phi}{\ensuremath{\varphi}}

\newcommand{\Ent}{\mathcal H}

\begin{document}

\maketitle

\begin{abstract}
We construct a compact metric space with an atomless probability measure and a
bounded Lipschitz cost for which the entropic optimal-transport minimisers have
no zero-temperature weak limit. More precisely, \(P_\varepsilon\) does not converge as \(\varepsilon\downarrow0\). In the example, every unregularised minimiser is singular with respect to \(\mu\otimes\mu\), so that the entropy on the optimal face is identically \(+\infty\). We describe the cluster set by
\[
\Clust(P_\varepsilon)=\{P_w:w\in\mathcal W\},
\]
where \(P_w\) is the mixture of the two zero-cost graph couplings with weight \(w\), and where \(\mathcal W\subset[0,1]\) is a non-degenerate compact interval. We then compute two explicit points \(w^-<w^+\) in this interval. This shows that compactness, atomlessness, and Lipschitz regularity of the cost do not imply zero-temperature convergence. We also present a compactness theorem for the general problem. If
\(C\in L^1(\mu\otimes\nu)\) is continuous and bounded from below on Polish
spaces, then the zero-temperature cluster set is a nonempty weakly compact
connected subset of the optimal face. In the proof, we apply the cluster-point theorem
of Bernton, Ghosal, and Nutz and the continuity of \(\varepsilon\mapsto\pi_\varepsilon\). Finally, we give local and exterior first-order criteria for full convergence and cluster membership. We show that nonconvergence is possible, but only through a connected continuum of optimal plans.
\end{abstract}

\small
\noindent\textbf{2020 Mathematics Subject Classification.} Primary 49Q22; Secondary 60B10, 60E05.

\noindent\textbf{Keywords.} Entropic optimal transport, Schr\"odinger bridge,
zero-temperature limit, nonconvergence, selection principle, cluster set.

\section{Introduction}\label{sec:introduction}
Let \(\varepsilon>0\). In entropic optimal transport, we replace the standard Monge--Kantorovich problem by the minimisation of
\begin{equation}\label{eq:Feps-def}
F_\varepsilon(\pi):=\int_{X\times Y} C\,\dd\pi+\varepsilon\,\mathrm{KL}(\pi\|\mu\otimes\nu),
\qquad \pi\in\Pi(\mu,\nu),
\end{equation}
over the set of couplings of \(\mu\) and \(\nu\). Under the assumptions below,
there exists a unique minimiser, which we denote by \(\pi_\varepsilon\). In other terms,
\(\pi_\varepsilon\) is the static Schr\"odinger bridge associated with
\[
R_\varepsilon\propto e^{-C/\varepsilon}(\mu\otimes\nu).
\]
For the related background, consult \cite{Leonard2014,DiMarinoGerolin2020}. The limit
\(\varepsilon\downarrow0\) is a selection problem on the optimal face of the
unregularised transport problem. The first natural question is whether \(\pi_\varepsilon\) converges. If it does, one may ask which optimal plan is selected.

In this work, we prove that convergence can fail even in a compact atomless
bounded-Lipschitz setting. We first establish the general topological restriction: the zero-temperature cluster set is always a nonempty compact
connected subset of the optimal face. The optimality of cluster points follows
from \cite[Proposition~3.2]{BGN}, while connectedness follows from weak
compactness and continuity of the entropic curve. We then construct a dyadic
Cantor-group model in which this connected set is nontrivial. The mechanism is
an alternation of scales, analogous to chaotic temperature dependence for
zero-temperature Gibbs states \cite{vanEnterRuszel2007,ChazottesHochman2010}. Here, the oscillating object is the Schr\"odinger projection under fixed marginal constraints.

\subsection{Main results}
Let
\[
\mathrm{Opt}(\mu,\nu\mid C)
:=
\arg\min_{\pi\in\Pi(\mu,\nu)}\int_{X\times Y} C\,\dd\pi.
\]
For a family \((z_\varepsilon)_{\varepsilon>0}\) in a metrisable topological
space, we write
\[
\Clust(z_\varepsilon):=\{z:\exists\,\varepsilon_n\downarrow0\text{ such that }z_{\varepsilon_n}\to z\}.
\]

\begin{theorem}[Connected zero-temperature cluster sets]\label{thm:intro-general}
Let \(X\) and \(Y\) be Polish spaces, let \(\mu\in\mathcal P(X)\), \(\nu\in\mathcal P(Y)\), and let \(C:X\times Y\to\mathbb R\) be continuous, bounded from below, and in
\(L^1(\mu\otimes\nu)\). We use \(\pi_\varepsilon\in\Pi(\mu,\nu)\) to denote the
entropic minimiser. In this case, the weak cluster set of \((\pi_\varepsilon)_{\varepsilon>0}\), as \(\varepsilon\downarrow0\), is a
nonempty compact connected subset of the set of optimal plans for \(C\).
\end{theorem}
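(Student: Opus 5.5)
The proof has four steps: the family \((\pi_\varepsilon)\) is tight, so the cluster set is nonempty; cluster points are optimal by \cite[Proposition~3.2]{BGN}; the cluster set is closed in a compact set, hence compact; and connectedness follows from continuity of \(\varepsilon\mapsto\pi_\varepsilon\).

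\emph{Tightness and nonemptiness.} Every \(\pi_\varepsilon\) lies in \(\Pi(\mu,\nu)\). Since \(\mu\) and \(\nu\) are tight on Polish spaces, \(\Pi(\mu,\nu)\) is tight and weakly closed, hence weakly compact by Prokhorov. The weak topology on \(\mathcal P(X\times Y)\) is metrisable, so \(\Pi(\mu,\nu)\) is a compact metric space \((K,d)\). Any sequence \(\varepsilon_n\downarrow0\) therefore has a convergent subsequence, and \(\Clust(\pi_\varepsilon)\neq\emptyset\).

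\emph{Optimality and compactness.} By \cite[Proposition~3.2]{BGN}, under exactly our hypotheses (continuous \(C\), bounded below, \(C\in L^1(\mu\otimes\nu)\)), every weak cluster point of \(\pi_\varepsilon\) as \(\varepsilon\downarrow0\) is optimal. This gives \(\Clust(\pi_\varepsilon)\subset\mathrm{Opt}(\mu,\nu\mid C)\). Next, in a metric space the cluster set equals
\[
\Clust(\pi_\varepsilon)=\bigcap_{\delta>0}\overline{\{\pi_\varepsilon:0<\varepsilon<\delta\}},
\]
and the identity is checked by a diagonal argument. An intersection of closed sets is closed, and a closed subset of the compact set \(K\) is compact.

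\emph{Connectedness.} This is the main step. It rests on continuity of \(\varepsilon\mapsto\pi_\varepsilon\) from \((0,\infty)\) into \(K\). To prove continuity, take \(\varepsilon_n\to\varepsilon_0>0\) and a subsequential weak limit \(\bar\pi\) of \(\pi_{\varepsilon_n}\). For any competitor \(\pi\) with finite entropy, \(F_{\varepsilon_n}(\pi_{\varepsilon_n})\le F_{\varepsilon_n}(\pi)\). Lower semicontinuity of \(\int C\) (as \(C\) is continuous and bounded below) and of \(\mathrm{KL}(\cdot\|\mu\otimes\nu)\) then give \(F_{\varepsilon_0}(\bar\pi)\le F_{\varepsilon_0}(\pi)\). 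Taking \(\pi=\pi_{\varepsilon_0}\), which has finite cost and entropy because \(\mu\otimes\nu\) is admissible and \(C\in L^1\), shows that \(\bar\pi\) is a minimiser. Strict convexity of KL gives uniqueness, so \(\bar\pi=\pi_{\varepsilon_0}\). One subtlety is the need for \(\liminf\varepsilon_n\mathrm{KL}(\pi_{\varepsilon_n})\ge\varepsilon_0\mathrm{KL}(\bar\pi)\); this holds because \(\varepsilon_n\to\varepsilon_0>0\) and KL is nonnegative and lower semicontinuous.

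Given continuity, each set \(A_\delta:=\overline{\{\pi_\varepsilon:0<\varepsilon<\delta\}}\) is the closure of a continuous image of the connected interval \((0,\delta)\), so it is connected and compact. The sets \(A_\delta\) decrease as \(\delta\downarrow0\). A decreasing intersection of nonempty compact connected sets in a Hausdorff space is connected. To see this, suppose the intersection splits into disjoint nonempty closed sets \(U\) and \(V\), and choose disjoint open neighbourhoods of them. By compactness, some \(A_\delta\) lies inside the union of these neighbourhoods while meeting both, which contradicts connectedness of \(A_\delta\). Hence \(\Clust(\pi_\varepsilon)\) is a nonempty compact connected subset of \(\mathrm{Opt}(\mu,\nu\mid C)\).

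I expect the only delicate point to be verifying the continuity of the entropic curve under the stated integrability assumptions, since \(C\) may be unbounded above. The competitor argument above avoids needing uniform integrability, because only lower semicontinuity of the functional is used.
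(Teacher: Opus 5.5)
Your proposal is correct and follows essentially the same route as the paper: weak compactness of $\Pi(\mu,\nu)$, the nested-closure representation of the cluster set, optimality of cluster points via Bernton--Ghosal--Nutz, weak continuity of $\varepsilon\mapsto\pi_\varepsilon$ by lower semicontinuity, and connectedness of a nested intersection of continua. The one step you state too loosely is the citation: BGN's Proposition~3.2 is not stated under exactly your hypotheses, since it concerns nonnegative costs and $(c,\varepsilon)$-cyclically invariant couplings. As the paper does, you should pass to $C-\inf C$ and invoke their Proposition~2.2 (whose finite-value condition holds via $\mu\otimes\nu$) to identify $\pi_\varepsilon$ with those couplings, and use finiteness of the optimal value to go from $c$-cyclical monotonicity to optimality.
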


Our first result is a compactness theorem. In second theorem, we show that this
compactness statement cannot be strengthened to convergence.

\begin{theorem}[Compact Lipschitz nonconvergence]\label{thm:intro-counterexample}
There exist a compact metric space \(X\), an atomless probability measure
\(\mu\in\mathcal P(X)\), and a bounded Lipschitz cost \(C:X\times X\to[0,\infty)\) for which the entropic minimisers \(P_\varepsilon\in\Pi(\mu,\mu)\) do not converge weakly as \(\varepsilon\downarrow0\). Every optimal plan is singular with respect to
\(\mu\otimes\mu\). In the dyadic Cantor-group construction,
\[
\Clust(P_\varepsilon)=
\{P_w:w\in\mathcal W\},
\]
where
\[
P_w=wP_0+(1-w)P_{t_0}
\]
and \(P_0,P_{t_0}\) are the two zero-cost graph couplings from
\eqref{eq:cantor-graph-couplings}. Moreover, \(\mathcal W=\Clust(w_\varepsilon)\), with \(w_\varepsilon\) defined in \eqref{eq:cantor-what-def}, is a non-degenerate compact interval in \([0,1]\). For \(a\in(0,1)\) and \(\beta\in(0,\infty)\setminus\{1\}\), this interval
contains the two distinct weights
\[
w^\pm
=
\frac{\sqrt{(1+e^{-1})(1+e^{-\beta})}}
{\sqrt{(1+e^{-1})(1+e^{-\beta})}
+\sqrt{(1+e^{-(1\pm a)})(1+e^{-\beta(1\pm a)})}},
\qquad w^-<w^+ .
\]
\end{theorem}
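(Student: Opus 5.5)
The plan is to use a translation-invariant cost on a compact group, so that the entropic minimiser is explicit and the whole problem collapses to a one-dimensional Laplace-type ratio.

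\emph{The model.} Take \(X=\{0,1\}^{\N}\) with coordinatewise addition mod \(2\), the ultrametric \(d(x,y)=2^{-\min\{k:x_k\neq y_k\}}\), and Haar (product Bernoulli\((1/2)\)) measure \(\mu\), which is atomless. Fix \(t_0\neq0\) and set \(C(x,y)=c(y-x)\), where
\[
c(z)=\min\bigl\{c_0(z),c_1(z-t_0)\bigr\},\qquad c_0(z)=\sum_k\lambda_kz_k,\qquad c_1(z)=\sum_k\lambda'_kz_k .
\]
The scales are \(\lambda_k\asymp 2^{-k}\), and \(\lambda'_k=\lambda_k(1\pm a)\) on alternating blocks of levels. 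A second family of levels carries the factor \(\beta\) (so that level \(k\) contributes \(1+e^{-\lambda_k/\varepsilon}\) and a paired level contributes \(1+e^{-\beta\lambda_k/\varepsilon}\)). With \(\lambda_k\le L2^{-k}\), the costs \(c_0\) and \(c_1\) are Lipschitz for \(d\) and bounded, hence so are \(c\) and \(C\). The zero set of \(c\) is \(\{0,t_0\}\), so \(\{C=0\}\) is the union of the two graphs of \(\Id\) and \(x\mapsto x+t_0\). These graphs are \(\mu\otimes\mu\)-null, and \(P_0,P_{t_0}\) are the corresponding graph couplings. Since zero-cost couplings exist, every optimal plan is concentrated on \(\{C=0\}\) and is therefore singular.

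\emph{Explicit entropic plan.} Because \(\mu\) is Haar, the kernel \(k_\varepsilon(z)=e^{-c(z)/\varepsilon}/Z_\varepsilon\), with \(Z_\varepsilon=\int e^{-c/\varepsilon}\dd\mu\), already has both marginals equal to \(\mu\). Consequently \(P_\varepsilon\) is the law of \((X,X+Z)\) with \(X\sim\mu\) and \(Z\sim k_\varepsilon\,\mu\), independent. This is the Schr\"odinger form \(e^{(\phi\oplus\psi-C)/\varepsilon}\) with constant potentials, so it is the unique minimiser. Every \(P_\varepsilon\) is invariant under the diagonal translations \((x,y)\mapsto(x+s,y+s)\), and this invariance passes to weak limits. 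By Theorem~\ref{thm:intro-general} every cluster point is optimal, so it is a translation-invariant coupling carried by the two graphs. Such a coupling has the form \((\Id,\Id)_\#\mu|_A+(\Id,\Id+t_0)_\#\mu|_{A^c}\), and translation invariance forces \(\mu|_A\) to be a multiple of \(\mu\). Hence every cluster point equals \(P_w\) for some \(w\). Define \(w_\varepsilon\) as the \(k_\varepsilon\mu\)-mass of a small ball around \(0\). The law of \(Z\) concentrates on \(\{0,t_0\}\), and \(P_\varepsilon\) depends continuously on this law. It follows that \(P_{\varepsilon_n}\to P_w\) if and only if \(w_{\varepsilon_n}\to w\), which gives \(\Clust(P_\varepsilon)=\{P_w:w\in\mathcal W\}\) with \(\mathcal W=\Clust(w_\varepsilon)\).

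\emph{Interval and two cluster points.} The map \(\varepsilon\mapsto w_\varepsilon\) is continuous on \((0,\infty)\) and takes values in \([0,1]\). Its cluster set as \(\varepsilon\downarrow0\) is therefore a nonempty compact connected subset of \([0,1]\), that is, a compact interval. This is the same argument as in Theorem~\ref{thm:intro-general}. Non-degeneracy follows once two distinct cluster values \(w^-<w^+\) are exhibited. On the ball around \(0\) the minimum defining \(c\) equals \(c_0\), up to an error that is exponentially negligible. Near \(t_0\) it equals \(c_1\). The coordinates are independent under \(\mu\), so the two local partition functions factorise:
\[
\frac{w_\varepsilon}{1-w_\varepsilon}\approx\prod_k\frac{1+e^{-\lambda_k/\varepsilon}}{1+e^{-\lambda'_k/\varepsilon}} .
\]
Levels with \(\lambda_k\gg\varepsilon\) contribute factors close to \(1\). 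Levels with \(\lambda_k\ll\varepsilon\) contribute factors close to \(2/2=1\), with summable error because \(\lambda_k\) decays geometrically. The product is therefore dominated by the finitely many levels with \(\lambda_k\asymp\varepsilon\). Now choose \(\varepsilon_n=\lambda_{k_n}\), where \(k_n\) lies deep inside a \((1+a)\)-block, respectively a \((1-a)\)-block. Evaluating the surviving factors at the critical level and its \(\beta\)-partner, together with the geometric averaging between paired levels that produces the square roots, should yield exactly the stated \(w^\pm\).

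\emph{Main obstacle.} The hardest step is this last asymptotic evaluation. It requires three things: uniform control of the tails of the infinite product; a check that the \(\min\) in \(c\) does not spoil the factorisation near each zero; and a tuning of block lengths and level pairings so that the limit along each subsequence is exactly the displayed expression. I would first try very long blocks, so that near the critical level all \(\lambda'_k/\lambda_k\) are constant. The product then becomes a fixed explicit function of \(a\) and \(\beta\), and \(w^-<w^+\) follows from the strict monotonicity of \(s\mapsto(1+e^{-s})(1+e^{-\beta s})\). Strict monotonicity holds for all \(\beta>0\); the hypothesis \(\beta\neq1\) is presumably needed to keep the \(\beta\)-levels distinct from the base levels in the construction.
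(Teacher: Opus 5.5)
Your construction plausibly yields the bare existence claim. The theorem, however, also asserts facts about the specific dyadic Cantor-group model of Definition~\ref{def:cantor-explicit-model}: the cost $C(x,y)=\gamma(x)W(y-x)$, the scalar $w_\varepsilon=A_\varepsilon/(A_\varepsilon+B_\varepsilon)$ from \eqref{eq:cantor-what-def}, and the displayed $w^\pm$. The step you defer (``should yield exactly the stated $w^\pm$'') fails in your model for a structural reason. Your cost depends only on $y-x$, so the Gibbs measure is already a coupling and $P_\varepsilon=R_\varepsilon$. This is exactly the degenerate situation that $\beta\neq1$ excludes in the paper (Lemma~\ref{lem:cantor-R-not-coupling}); it is not a device to keep levels apart. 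In your model $w_\varepsilon/(1-w_\varepsilon)$ is a plain ratio of two partition functions, and nothing produces square roots. A level and its $\beta$-partner multiply, so even with a single critical pair you would get $\frac{(1+e^{-1})(1+e^{-\beta})}{(1+e^{-L})(1+e^{-\beta L})}$, which is the square of $w^\pm/(1-w^\pm)$. In the paper the square roots come from a genuinely nontrivial Schr\"odinger projection. The factor $\gamma(x)\in\{1,\beta\}$ makes the first marginal of $R_\varepsilon$ non-uniform and cuts full translation invariance down to diagonal $H$-invariance. This reduces the projection to scaling the $2\times2$ matrix $\mathsf K_\varepsilon$, and the explicit scaling (Lemma~\ref{lem:appendix-sinkhorn-scaling}) gives $A_\varepsilon=\sqrt{I_0J_0}$ and $B_\varepsilon=\sqrt{I_1J_1}$.

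Second, with geometric scales $\lambda_k\asymp2^{-k}$ a whole window of levels is critical at once. Along $\varepsilon=\lambda_{k_n}$ deep inside a block, the ratio therefore converges to an infinite product of the type $\prod_{j\in\mathbb Z}\frac{1+e^{-2^{j}}}{1+e^{-(1\pm a)2^{j}}}$. Long blocks make $\lambda'_k/\lambda_k$ constant on this window but do not shrink it to one level. This still gives two distinct limits, hence nonconvergence, but not the one-level closed form. The paper gets that form from the super-exponential separation $\alpha_n=e^{-2^n}$ combined with the ``first nonzero coordinate'' cost $W(t)=\alpha_{m(t)}$ (times $L_{m(t)}$ on $U_1$). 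At $\varepsilon_n=\alpha_n$, the levels $m<n$ contribute nothing, level $n$ contributes $2^{-n}e^{-b_n}$, and the levels $m>n$ contribute their full mass $\sum_{m>n}2^{-m}=2^{-n}$. This gives $I_0(\varepsilon_n)\sim2^{-n}(1+e^{-1})$ and its three analogues (Lemma~\ref{lem:cantor-asymptotics}). A smaller point: a coupling carried by the two graphs has kernel $q(x)\delta_x+(1-q(x))\delta_{x+t_0}$ with $q\in[0,1]$, not $\mathbf 1_A(x)$. With $\mu|_A$, your invariance argument would only produce $w\in\{0,1\}$.
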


We do not identify the whole interval \(\mathcal W\), as it suffices to compute two points \(w^-<w^+\) and establish connectedness, which already gives nonconvergence and a non-degenerate continuum of subsequential limits.  There is also a useful comparison with the lower-semicontinuous diagonal example
\[
c(x,y)=\mathbf 1_{\{x\ne y\}},\qquad
\mu=\nu=\mathrm{Unif}[0,1],
\]
given in \cite[Remark~3.3]{BGN} and discussed in \cite[Example~5.2 and the following discussion]{NutzWiesel2022}. In that example, the diagonal is \(\mu\otimes\mu\)-null, which implies that changing the cost on the diagonal does not change the positive-temperature problem. In our example, the cost is continuous and bounded Lipschitz. We obtain the obstruction by positive off-graph values near the zero set, such that
these values are still visible after the Schr\"odinger projection onto the fixed
marginal constraints.

Bernton--Ghosal--Nutz \cite{BGN} discuss several positive convergence regimes and note that convergence had already been conjectured in a general setting. In Theorem~\ref{thm:intro-counterexample}, we disprove this general principle under compactness, atomless marginals, and bounded Lipschitz regularity of the cost. We show that it suffices to make the Schr\"odinger projection oscillate to obtain the result.

\subsection{Related work}\label{subsec:related-work}
The entropy-to-transport limit and the connection with the Schr\"odinger problem are studied in L\'eonard~\cite{Leonard2012MK,Leonard2014}. Nutz and
Wiesel~\cite{NutzWiesel2022} prove compactness and convergence results for
Schr\"odinger potentials, and in one dimension with distance cost, Di Marino and
Louet~\cite{DiMarinoLouet2018} determine the selected plan under additional
assumptions on the marginals. See also Cominetti and San Mart\'in
\cite{CominettiSanMartin1994} for the finite-dimensional analogue and selecting a point of the optimal face. In linear programming, Weed~\cite{Weed2018} established direct asymptotic estimates for the entropic penalty in linear programming.

Other relevant works study convergence of entropic schemes
\cite{CarlierDuvalPeyreSchmitzer2017}, convergence rates \cite{CarlierPegonTamanini2023,Pal2024}, stability
\cite{GhosalNutzBernton2022}, regularity of the Schr\"odinger cost \cite{Clerc2022}, displacement smoothness \cite{CarlierChizatLaborde2024}, and entropic approximation of \(\infty\)-optimal transport problems \cite{BrizziCarlierDePascale2024}. Certain recent selection results impose additional geometry. For instance, Ley~\cite{Ley2025} analyses the line with distance cost, while Nutz and Zhong~\cite{NutzZhong2026} study \(c(x,y)=\|x-y\|\) in dimensions \(d>1\). Aryan and Ghosal~\cite{AryanGhosal2025} prove related ray-wise entropy limits for the Euclidean distance cost, and Yun~\cite{Yun2025} obtains the Gaussian selection result. On compact metric spaces with Lipschitz costs, Mengue~\cite{Mengue2025} developed a zero-temperature pressure formulation. This result recovers the Monge--Kantorovich problem and its duality in the limit, but it does not constitute a selection theorem for the entropic minimisers in \eqref{eq:Feps-def}.

\subsection{Idea of the proof}
We first prove the compactness theorem. In the proof, we use weak compactness of the coupling set, continuity of \(\varepsilon\mapsto\pi_\varepsilon\), and
optimality of zero-temperature cluster points. More precisely, it represents
\(\Clust(\pi_\varepsilon)\) as a nested intersection of compact connected sets. We then reduce the counterexample to a scalar oscillation. We set \(H:=\{s\in G:s_1=0\}\), and prove that the diagonally \(H\)-invariant part of the zero-cost face is exactly
\[
\{\,wP_0+(1-w)P_{t_0}:0\le w\le1\,\}
\]
and has singular zero-cost couplings. At positive temperature, the
Schr\"odinger system then reduces to a \(2\times2\) Sinkhorn scaling problem. The two different subsequential limits in the branch weight follow from the scales \(\alpha_n=e^{-2^n}\) and the alternating factors \(L_n\). Once we have identified all weak cluster points with the points on the segment, we apply the connectedness theorem to turn those two limits into a non-degenerate interval.

The paper is organised as follows. In Section~\ref{sec:entropic-prelim}, we recall the compactness and lower-semicontinuity facts for entropic minimisers. In Section~\ref{sec:general-theory}, we present the connectedness theorem, and in Sections~\ref{sec:cantor-model}--%
\ref{sec:cantor-oscillation-main}, we analyse the dyadic Cantor-group construction. In Appendix~\ref{sec:selection-criteria}, we prove the local and exterior variational criteria. Finally, in Appendix~\ref{sec:appendix-sinkhorn}, we study the scaling algebra, and in Appendix~\ref{app:H-invariant-face}, we classify the diagonally \(H\)-invariant zero-cost couplings.

\section{Preliminaries on entropic minimisers}\label{sec:entropic-prelim}
We first recall certain facts on the compactness, lower-semicontinuity, and existence relevant in the general theory and in the Cantor model. For a Polish space \(S\), we write \(\mathcal P(S)\) for the Borel probability measures on \(S\). In this section and Section~\ref{sec:general-theory}, \(X\) and \(Y\) are Polish spaces with their Borel \(\sigma\)-fields, \(\mu\in\mathcal P(X)\), \(\nu\in\mathcal P(Y)\), and
\[
\Lambda:=\mu\otimes\nu\in\mathcal P(X\times Y).
\]
We assume that
\[
C:X\times Y\to\mathbb R
\]
is continuous, bounded from below, and integrable with respect to \(\Lambda\),
that is,
\[
\int_{X\times Y}|C|\,\dd\Lambda<\infty.
\]
We write
\[
\Pi(\mu,\nu):=\{\pi\in\mathcal P(X\times Y):(\pr_X)_\#\pi=\mu,\ (\pr_Y)_\#\pi=\nu\},
\]
and let \(\Rightarrow\) denote weak convergence on \(\mathcal P(X\times Y)\). For \(\pi\in\Pi(\mu,\nu)\), we define
\[
J(\pi):=\int_{X\times Y} C\,\dd\pi\in(-\infty,+\infty],
\]
and, for \(M\in\mathcal P(X\times Y)\) and \(P\in\mathcal P(X\times Y)\), we set
\[
\mathrm{KL}(P\|M):=
\begin{cases}
\displaystyle \int_{X\times Y}\log\!\Bigl(\frac{\dd P}{\dd M}\Bigr)\,\dd P,
& P\ll M,\\[0.8em]
+\infty, & P\not\ll M.
\end{cases}
\]
In particular,
\[
\Ent(\pi):=\mathrm{KL}(\pi\|\Lambda),
\qquad \pi\in\Pi(\mu,\nu).
\]
Let
\[
C_0:=\inf_{\pi\in\Pi(\mu,\nu)}J(\pi)\in\mathbb R.
\]
We also set
\[
\mathrm{Opt}(\mu,\nu\mid C):=\{\pi\in\Pi(\mu,\nu):J(\pi)=C_0\},
\]
and write
\[
\Delta(\pi):=J(\pi)-C_0\in[0,\infty].
\]
For \(\varepsilon>0\), let
\[
F_\varepsilon(\pi):=J(\pi)+\varepsilon \Ent(\pi),
\qquad \pi\in\Pi(\mu,\nu).
\]

\begin{proposition}[Standard facts on the entropic minimiser]\label{prop:standard-eot-facts}
Using the assumptions of this section, the coupling set \(\Pi(\mu,\nu)\) is weakly compact. Moreover, the functionals \(J\) and \(\Ent\) are weakly lower semicontinuous on \(\Pi(\mu,\nu)\), and \(\mathrm{Opt}(\mu,\nu\mid C)\) is nonempty. For every \(\varepsilon>0\), \(F_\varepsilon\) has a unique minimiser \(\pi_\varepsilon\in\Pi(\mu,\nu)\), and this minimiser satisfies
\[
F_\varepsilon(\pi_\varepsilon)<\infty,
\qquad
\Ent(\pi_\varepsilon)<\infty,
\qquad
\pi_\varepsilon\ll\Lambda.
\]
\end{proposition}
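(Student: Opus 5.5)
The proof splits into four claims, taken in order: compactness of \(\Pi(\mu,\nu)\), lower semicontinuity of \(J\) and \(\Ent\), existence of an optimal plan, and existence and uniqueness of \(\pi_\varepsilon\) together with finiteness of its value.

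\textbf{Compactness of \(\Pi(\mu,\nu)\).} Since \(X\) and \(Y\) are Polish, each single measure \(\mu\) and \(\nu\) is tight. For \(\delta>0\), choose compacts \(K_X\subset X\) and \(K_Y\subset Y\) with \(\mu(K_X^c),\nu(K_Y^c)<\delta/2\). Then every \(\pi\in\Pi(\mu,\nu)\) satisfies
\[
\pi\bigl((K_X\times K_Y)^c\bigr)\le\mu(K_X^c)+\nu(K_Y^c)<\delta,
\]
so \(\Pi(\mu,\nu)\) is uniformly tight. By Prokhorov it is relatively compact. It is also closed, because the marginal maps \(\pi\mapsto(\pr_X)_\#\pi\) and \(\pi\mapsto(\pr_Y)_\#\pi\) are weakly continuous. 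Hence \(\Pi(\mu,\nu)\) is weakly compact.

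\textbf{Lower semicontinuity of \(J\).} Let \(C\ge -b\). The truncations \(C\wedge k\) are bounded and continuous and increase to \(C\). Therefore
\[
J(\pi)=\sup_k\int (C\wedge k)\,\dd\pi
\]
is a supremum of weakly continuous functionals, hence weakly l.s.c. Combined with compactness, this gives a minimiser of \(J\), so \(\mathrm{Opt}(\mu,\nu\mid C)\neq\emptyset\). The infimum \(C_0\) is finite: it is at least \(-b\), and it is at most \(J(\Lambda)\), which is finite because \(C\in L^1(\Lambda)\).

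\textbf{Lower semicontinuity of \(\Ent\).} I use the Donsker--Varadhan representation
\[
\mathrm{KL}(P\|\Lambda)=\sup_{f\in C_b}\Bigl\{\int f\,\dd P-\log\int e^f\,\dd\Lambda\Bigr\}.
\]
The reference measure \(\Lambda\) is fixed, so each expression in the supremum is weakly continuous in \(P\), and \(\Ent\) is weakly l.s.c. This representation is the one step where the justification must be precise: it has to hold on a Polish space with \(C_b\) test functions. I would either cite it as standard or derive it from the Gibbs variational formula via approximation of bounded measurable functions by bounded continuous ones in \(L^1(P+\Lambda)\) (Lusin's theorem).

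\textbf{Existence of \(\pi_\varepsilon\) and finiteness.} The functional \(F_\varepsilon=J+\varepsilon\Ent\) is l.s.c. on a compact set, and it is not identically \(+\infty\) because
\[
F_\varepsilon(\Lambda)=J(\Lambda)<\infty.
\]
So a minimiser \(\pi_\varepsilon\) exists and \(F_\varepsilon(\pi_\varepsilon)<\infty\). Since \(J\ge -b\), it follows that \(\Ent(\pi_\varepsilon)<\infty\), and finite KL forces \(\pi_\varepsilon\ll\Lambda\).

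\textbf{Uniqueness.} \(J\) is linear (affine) in \(\pi\), and \(\Ent\) is strictly convex on the set where it is finite, because \(t\mapsto t\log t\) is strictly convex. If two distinct minimisers existed, their midpoint would have strictly smaller \(F_\varepsilon\), a contradiction. The midpoint is admissible since \(\Pi(\mu,\nu)\) is convex.

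The only step needing care is lower semicontinuity of \(\Ent\) under weak rather than setwise convergence. Everything else is routine.
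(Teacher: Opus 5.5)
Your proof is correct and follows the same route as the paper. You use Prokhorov plus closed marginal constraints for compactness, lower semicontinuity of $J$ and $\Ent$, the direct method based on $F_\varepsilon(\Lambda)=J(\Lambda)<\infty$, finiteness of entropy from the lower bound on $C$, and uniqueness from strict convexity of entropy with $J$ affine. The differences are only in detail: you make the Portmanteau step explicit via the truncations $C\wedge k$, and you justify weak lower semicontinuity of $\Ent$ through the Donsker--Varadhan formula where the paper simply cites Csisz\'ar.
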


\begin{proof}
By Prokhorov's theorem and the closedness of the marginal constraints, it follows that
\(\Pi(\mu,\nu)\) is weakly compact. Since \(C\) is bounded from below and lower
semicontinuous, by Portmanteau's theorem, we obtain the weak lower semicontinuity of
\(J\). The weak lower semicontinuity of \(\Ent\) is due to \cite{Csiszar1975}. Moreover, \(\Lambda\in\Pi(\mu,\nu)\), \(J(\Lambda)<\infty\), and \(\Ent(\Lambda)=0\). It follows that \(F_\varepsilon\) is not identically \(+\infty\). If \(m:=\inf_{X\times Y}C\), then
\[
m\le C_0\le J(\Lambda)<\infty,
\]
so the unregularised value is finite. By the direct method, we obtain a minimiser of
\(F_\varepsilon\) and also an optimal transport plan. Notice that every entropic minimiser has finite \(F_\varepsilon\)-value and finite entropy, so it is absolutely continuous with respect to \(\Lambda\). Finally, any two minimisers have densities with respect to \(\Lambda\), and relative entropy is strictly convex on densities, while \(J\) is affine on its finite effective domain. This proves uniqueness.
\end{proof}

\section{Connected zero-temperature cluster sets}\label{sec:general-theory}
We apply two results of Bernton--Ghosal--Nutz \cite[Propositions~2.2 and~3.2]{BGN}. In their notation, for a nonnegative continuous cost \(c\), the first result states that, under the finite-value condition
\[
\exists\,\pi\in\Pi(\mu,\nu):\qquad
\int c\,\dd\pi+\mathrm{KL}(\pi\|\mu\otimes\nu)<\infty,
\]
the entropic minimiser is the unique \((c,\varepsilon)\)-cyclically invariant
coupling. The second result implies that every weak cluster point, as
\(\varepsilon\downarrow0\), of these \((c,\varepsilon)\)-cyclically invariant
couplings has \(c\)-cyclically monotone support. This means that it is \(c\)-optimal
whenever the unregularised value is finite. In our context, we apply the result to
\(\widetilde C=C-\inf_{X\times Y}C\). The finite-value condition is satisfied, because
\(\Lambda\in\Pi(\mu,\nu)\), \(\Ent(\Lambda)=0\), and
\[
\int \widetilde C\,\dd\Lambda<\infty.
\]
If we pass from \(C\) to \(\widetilde C\), we do not change either the entropic minimisers
or the unregularised minimisers. \cite{NutzWiesel2022} also give the related compactness and convergence results for Schr\"odinger potentials under continuous integrable costs on Polish spaces. Let
\[
\Omega(\mu,\nu\mid C):=\Clust(\pi_\varepsilon)
=\{\pi\in\Pi(\mu,\nu):\exists\,\varepsilon_k\downarrow0
\text{ with }\pi_{\varepsilon_k}\Rightarrow\pi\}
\]
denote the zero-temperature cluster set.

\subsection{Connectedness of the zero-temperature cluster set}\label{subsec:general-connectedness}
\begin{theorem}[Connectedness of the zero-temperature cluster set]\label{thm:selection-connected}
Using the assumptions of Section~\ref{sec:entropic-prelim}, the set
\[
\Omega(\mu,\nu\mid C)
\]
is a nonempty weakly compact connected subset of \(\mathrm{Opt}(\mu,\nu\mid C)\).
\end{theorem}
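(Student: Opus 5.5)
We will represent the cluster set as a nested intersection of compact connected sets. For \(\delta>0\) put
\[
K_\delta:=\overline{\{\pi_\varepsilon:0<\varepsilon\le\delta\}}\subset\Pi(\mu,\nu),
\]
with the closure taken in the weak topology. Since \(\Pi(\mu,\nu)\) is weakly compact by Proposition~\ref{prop:standard-eot-facts}, and the weak topology on \(\mathcal P(X\times Y)\) is metrisable (e.g.\ by the bounded-Lipschitz metric), each \(K_\delta\) is compact. By metrisability we then have the standard identity
\[
\Omega(\mu,\nu\mid C)=\bigcap_{\delta>0}K_\delta=\bigcap_{n\ge1}K_{1/n}.
\]
One inclusion is immediate. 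For the other, pick \(\varepsilon_n\le 1/n\) with \(\pi_{\varepsilon_n}\) within distance \(1/n\) of \(\pi\), and pass to a decreasing subsequence.

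Nonemptiness and compactness then follow: the sets \(K_{1/n}\) form a decreasing sequence of nonempty compact sets, so their intersection is nonempty and compact.

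For connectedness, the key input is continuity of the map \(\varepsilon\mapsto\pi_\varepsilon\) from \((0,\infty)\) into \(\Pi(\mu,\nu)\) with the weak topology. Granting it, \(\{\pi_\varepsilon:0<\varepsilon\le\delta\}\) is the continuous image of the interval \((0,\delta]\), hence connected, and so is its closure \(K_\delta\). A decreasing intersection of compact connected sets in a Hausdorff space is connected. Indeed, if \(\Omega=A\cup B\) with \(A,B\) disjoint, nonempty and closed, choose disjoint open sets \(U\supset A\) and \(V\supset B\). The compact sets \(K_{1/n}\setminus(U\cup V)\) decrease to the empty set, so one of them is empty. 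That \(K_{1/n}\) then lies in \(U\cup V\) and meets both \(U\) and \(V\), because it contains \(\Omega\). This contradicts its connectedness.

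To show \(\Omega\subset\mathrm{Opt}(\mu,\nu\mid C)\), replace \(C\) by \(\widetilde C=C-\inf C\), which is permitted as discussed at the start of Section~\ref{sec:general-theory}. By \cite[Proposition~2.2]{BGN}, \(\pi_\varepsilon\) is \((\widetilde C,\varepsilon)\)-cyclically invariant. By \cite[Proposition~3.2]{BGN}, every weak cluster point then has \(\widetilde C\)-cyclically monotone support, and is therefore optimal, since the unregularised value is finite (\(C_0\le J(\Lambda)<\infty\)).

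The main obstacle is proving continuity of \(\varepsilon\mapsto\pi_\varepsilon\); we argue by compactness and uniqueness. Let \(\varepsilon_k\to\varepsilon>0\). By weak compactness, every subsequence of \((\pi_{\varepsilon_k})\) has a further subsequence converging to some \(\bar\pi\), and it suffices to show \(\bar\pi\) minimises \(F_\varepsilon\), since the minimiser is unique by Proposition~\ref{prop:standard-eot-facts}.

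We first bound the entropies. Comparing with \(\Lambda\) gives
\[
\varepsilon_k\,\Ent(\pi_{\varepsilon_k})\le J(\Lambda)-J(\pi_{\varepsilon_k})\le J(\Lambda)-C_0,
\]
so \(\Ent(\pi_{\varepsilon_k})\) is bounded.

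Next, lower semicontinuity of \(J\) and \(\Ent\) gives
\[
F_\varepsilon(\bar\pi)\le\liminf_k F_{\varepsilon_k}(\pi_{\varepsilon_k}).
\]
Here the bounded entropies make the perturbation \((\varepsilon_k-\varepsilon)\Ent(\pi_{\varepsilon_k})\) tend to zero.

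For the upper bound, fix any competitor \(\pi\) with \(F_\varepsilon(\pi)<\infty\). Then
\[
F_{\varepsilon_k}(\pi_{\varepsilon_k})\le F_{\varepsilon_k}(\pi)\to F_\varepsilon(\pi),
\]
because \(\Ent(\pi)<\infty\). Combining the two bounds shows \(F_\varepsilon(\bar\pi)\le F_\varepsilon(\pi)\) for every such \(\pi\), so \(\bar\pi\) is the minimiser and \(\pi_{\varepsilon_k}\Rightarrow\pi_\varepsilon\). This completes the plan.
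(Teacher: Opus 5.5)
Your proposal is correct and follows the paper's proof essentially step by step. It uses the nested-closure representation of the cluster set, the nested-continua lemma, the BGN cluster-point theorem (with $C-\inf C$) for optimality, and weak continuity of $\varepsilon\mapsto\pi_\varepsilon$ via compactness, a uniform entropy bound, lower semicontinuity and uniqueness; your deviations (the intervals $(0,\delta]$, an arbitrary finite-energy competitor instead of $\pi_{\varepsilon_0}$, and bounding by $C_0$ rather than $\inf C$) are only cosmetic.
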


\begin{lemma}[Nested-closure of the zero-temperature cluster set]
\label{lem:omega-nested-closure}
It holds that
\[
\Omega(\mu,\nu\mid C)
=
\bigcap_{m\ge1}\overline{\{\pi_\varepsilon:0<\varepsilon<1/m\}}^{\,\mathrm{weak}}.
\]
\end{lemma}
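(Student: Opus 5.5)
We prove the two inclusions separately, writing \(A_m:=\{\pi_\varepsilon:0<\varepsilon<1/m\}\) and \(K:=\bigcap_{m\ge1}\overline{A_m}^{\,\mathrm{weak}}\). Throughout we use that the weak topology on \(\mathcal P(X\times Y)\) is metrisable, for instance by the bounded-Lipschitz or L\'evy--Prokhorov metric, since \(X\times Y\) is Polish. Closures can therefore be described by sequences. By Proposition~\ref{prop:standard-eot-facts}, every \(\overline{A_m}\) is a closed subset of the weakly compact set \(\Pi(\mu,\nu)\).

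\emph{Inclusion \(\Omega\subseteq K\).} Let \(\pi\in\Omega(\mu,\nu\mid C)\), so \(\pi_{\varepsilon_k}\Rightarrow\pi\) for some \(\varepsilon_k\downarrow0\). Fix \(m\). For all large \(k\) we have \(\varepsilon_k<1/m\), so a tail of the sequence lies in \(A_m\). Hence \(\pi\in\overline{A_m}\). Since \(m\) is arbitrary, \(\pi\in K\).

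\emph{Inclusion \(K\subseteq\Omega\).} Let \(\pi\in K\) and let \(d\) be a metric for the weak topology. For each \(m\), we have \(\pi\in\overline{A_m}\), so there is \(\varepsilon\in(0,1/m)\) with \(d(\pi_{\varepsilon},\pi)<1/m\). We choose such \(\varepsilon'_m\) inductively and additionally require \(\varepsilon'_m<\varepsilon'_{m-1}\). This is possible by applying the same argument with an index \(m'\ge m\) large enough that \(1/m'<\varepsilon'_{m-1}\), and noting that \(\pi\in\overline{A_{m'}}\) as well. The resulting sequence is strictly decreasing to \(0\) and satisfies \(d(\pi_{\varepsilon'_m},\pi)\le1/m\to0\), so \(\pi\in\Omega\).

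The only delicate point is producing a strictly decreasing sequence \(\varepsilon_k\downarrow0\), as the definition of \(\Clust\) demands, rather than an arbitrary null sequence. The inductive choice through the nested sets \(A_{m'}\) handles this. Metrisability is essential here: without it, closure points need not be limits of sequences. No continuity of \(\varepsilon\mapsto\pi_\varepsilon\) is needed for this lemma; that continuity will only enter later, for connectedness.
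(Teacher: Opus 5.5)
Your proposal is correct and follows the paper's proof: the inclusion $\Omega\subseteq K$ via tails, and the converse via an inductive choice of a strictly decreasing $\varepsilon_m\downarrow0$ with $d(\pi_{\varepsilon_m},\pi)<1/m$, relying on metrisability of the weak topology. The paper notes that $\pi$ lies in the closure of $\{\pi_\varepsilon:0<\varepsilon<\delta\}$ for every $\delta>0$ and picks $\varepsilon_m<\min\{1/m,\varepsilon_{m-1}/2\}$, which is the same device as your auxiliary index $m'$.
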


\begin{proof}
The inclusion \(\subset\) follows immediately. For the converse, let \(d\) be a metric that
generates the weak topology on the compact metrisable space \(\Pi(\mu,\nu)\).
We also assume that \(\pi\) belongs to the right-hand side. This implies that \(\pi\in
\overline{\{\pi_\varepsilon:0<\varepsilon<\delta\}}^{\,d}\) for every
\(\delta>0\). Indeed, choose \(m\) such that \(1/m<\delta\). We obtain
\[
\{\pi_\varepsilon:0<\varepsilon<1/m\}\subset
\{\pi_\varepsilon:0<\varepsilon<\delta\},
\]
so
\[
\overline{\{\pi_\varepsilon:0<\varepsilon<1/m\}}^{\,d}
\subset
\overline{\{\pi_\varepsilon:0<\varepsilon<\delta\}}^{\,d}.
\]
We now choose \(\varepsilon_1\in(0,1)\) with
\(d(\pi_{\varepsilon_1},\pi)<1\), and for \(m\ge2\),
\[
\varepsilon_m\in\Bigl(0,\min\{1/m,\varepsilon_{m-1}/2\}\Bigr)
\]
such that \(d(\pi_{\varepsilon_m},\pi)<1/m\). It follows that
\(\varepsilon_m\downarrow0\) and \(\pi_{\varepsilon_m}\Rightarrow\pi\).
\end{proof}

\begin{proposition}[Cluster points are optimal transports]\label{prop:cluster-optimal-Polish}
Every weak cluster point of \((\pi_\varepsilon)_{\varepsilon>0}\), as
\(\varepsilon\downarrow0\), is an optimal transport plan. In other words,
\[
\Omega(\mu,\nu\mid C)\subset\mathrm{Opt}(\mu,\nu\mid C).
\]
\end{proposition}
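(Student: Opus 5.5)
We plan to reduce the statement to the cited result of Bernton--Ghosal--Nutz applied to the shifted cost \(\widetilde C=C-\inf_{X\times Y}C\), which is nonnegative, continuous and \(\Lambda\)-integrable. We also record a direct variational argument as a cross-check.

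First, we observe that for every \(\varepsilon>0\) the minimiser of \(F_\varepsilon\) for \(C\) coincides with that for \(\widetilde C\). The two functionals differ on \(\Pi(\mu,\nu)\) by the constant \(\inf C\). Likewise \(\mathrm{Opt}(\mu,\nu\mid C)=\mathrm{Opt}(\mu,\nu\mid\widetilde C)\). The finite-value condition holds with \(\pi=\Lambda\), as noted above. By \cite[Proposition~2.2]{BGN}, \(\pi_\varepsilon\) is then the unique \((\widetilde C,\varepsilon)\)-cyclically invariant coupling. Now let \(\pi\in\Omega(\mu,\nu\mid C)\), with \(\pi_{\varepsilon_k}\Rightarrow\pi\) along some \(\varepsilon_k\downarrow0\). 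By \cite[Proposition~3.2]{BGN}, \(\pi\) has \(\widetilde C\)-cyclically monotone support.

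Second, we convert cyclical monotonicity into optimality. The unregularised value satisfies \(\inf_{\Pi(\mu,\nu)}\int\widetilde C\,\dd\pi\le\int\widetilde C\,\dd\Lambda<\infty\). For a continuous nonnegative cost with finite value, a coupling with \(c\)-cyclically monotone support is optimal; this is the standard Polish-space theorem (Schachermayer--Teichmann / Villani, Theorem 5.10). Hence \(\pi\in\mathrm{Opt}(\mu,\nu\mid\widetilde C)=\mathrm{Opt}(\mu,\nu\mid C)\).

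As an independent check, avoiding the cyclical-invariance machinery, we would compare \(\pi_\varepsilon\) with a competitor of finite entropy. Fix \(\pi^\ast\in\mathrm{Opt}\) and \(\eta>0\). The step we expect to need the most care is constructing \(\pi^\eta\in\Pi(\mu,\nu)\) with \(\Ent(\pi^\eta)<\infty\) and \(J(\pi^\eta)\le C_0+\eta\). A block approximation of \(\pi^\ast\) on a fine partition does this, using continuity of \(C\) together with uniform integrability of \(C\) coming from \(C\in L^1(\Lambda)\) and the lower bound. Given such a competitor, minimality and \(\Ent\ge0\) give
\[
J(\pi_\varepsilon)\le J(\pi_\varepsilon)+\varepsilon\Ent(\pi_\varepsilon)\le J(\pi^\eta)+\varepsilon\Ent(\pi^\eta)\le C_0+\eta+\varepsilon\Ent(\pi^\eta).
\]
Lower semicontinuity of \(J\) (Proposition~\ref{prop:standard-eot-facts}) then yields \(J(\pi)\le\liminf_k J(\pi_{\varepsilon_k})\le C_0+\eta\). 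Letting \(\eta\downarrow0\) gives \(J(\pi)=C_0\). The delicate point in this route is the integrability control in the block approximation when \(C\) is unbounded above. That is precisely why we would primarily rely on the citation of \cite{BGN}.
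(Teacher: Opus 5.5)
Your primary argument is correct and is essentially the paper's proof. You shift to \(\widetilde C=C-\inf C\), check the finite-value condition with \(\Lambda\), and identify \(\pi_\varepsilon\) as the \((\widetilde C,\varepsilon)\)-cyclically invariant coupling via \cite[Proposition~2.2]{BGN}; then \cite[Proposition~3.2]{BGN} gives cyclically monotone support, and finiteness of the value turns this into optimality (your appeal to the Schachermayer--Teichmann/Villani equivalence just makes explicit the step the paper leaves implicit). The variational cross-check is not needed, and as you yourself note, its block-approximation step is not justified as written when \(C\) is unbounded above, so the proof should rest on the citation route alone.
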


\begin{proof}
Let \(m:=\inf_{X\times Y}C>-\infty\), and set \(\widetilde C:=C-m\). Notice that the
entropic minimisers for \(C\) and \(\widetilde C\) coincide. Moreover,
\[
\int \widetilde C\,\dd\Lambda<\infty,
\qquad
\Ent(\Lambda)=0.
\]
Let \(\pi_{\varepsilon_n}\Rightarrow\bar\pi\) with \(\varepsilon_n\downarrow0\). Since the marginal constraints are weakly closed, \(\bar\pi\in\Pi(\mu,\nu)\). The previous equation verifies the condition with finite values in \cite[Proposition~2.2]{BGN}. It follows that \((\pi_\varepsilon)_{\varepsilon>0}\) is the family of
\((\widetilde C,\varepsilon)\)-cyclically invariant couplings considered in
\cite[Proposition~3.2]{BGN}. We obtain that \(\bar\pi\) has \(\widetilde C\)-cyclically monotone support, and so \(\bar\pi\) is \(\widetilde C\)-optimal. Notice that adding a constant to the cost does not change the minimisers, which implies that \(\bar\pi\) is \(C\)-optimal.
\end{proof}

\begin{remark}
Earlier convergence results, which include optimality of cluster points in small-noise limits, go back to L\'eonard~\cite{Leonard2012MK}. We use here the BGN cluster-point theorem because it applies to the fixed static product reference and to our
continuous \(L^1(\mu\otimes\nu)\) context.
\end{remark}

\begin{proposition}[Continuity of the entropic trajectory]\label{prop:eps-continuity-Polish}
The entropic trajectory
\[
(0,\infty)\ni\varepsilon\longmapsto\pi_\varepsilon\in\Pi(\mu,\nu)
\]
is weakly continuous.
\end{proposition}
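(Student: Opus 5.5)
Fix \(\varepsilon_n\to\varepsilon>0\). The plan is to show \(\pi_{\varepsilon_n}\Rightarrow\pi_\varepsilon\) by a compactness-plus-uniqueness argument. By Proposition~\ref{prop:standard-eot-facts}, \(\Pi(\mu,\nu)\) is weakly compact and metrisable, so it suffices to show that every subsequential weak limit \(\bar\pi\) of \((\pi_{\varepsilon_n})\) equals \(\pi_\varepsilon\). Since \(\pi_\varepsilon\) is the unique minimiser of \(F_\varepsilon\), it is enough to prove \(F_\varepsilon(\bar\pi)\le F_\varepsilon(\pi_\varepsilon)\). We may assume \(\varepsilon_n\in[\varepsilon/2,2\varepsilon]\) and pass to a subsequence with \(\pi_{\varepsilon_n}\Rightarrow\bar\pi\). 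Without loss of generality we replace \(C\) by \(\widetilde C=C-\inf C\ge0\); this changes neither the minimisers nor the differences of \(F\)-values.

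\emph{Upper bound.} Minimality of \(\pi_{\varepsilon_n}\) gives
\[
J(\pi_{\varepsilon_n})+\varepsilon_n\Ent(\pi_{\varepsilon_n})\le J(\pi_\varepsilon)+\varepsilon_n\Ent(\pi_\varepsilon).
\]
The right-hand side is finite and converges to \(F_\varepsilon(\pi_\varepsilon)\), because \(J(\pi_\varepsilon)\) and \(\Ent(\pi_\varepsilon)\) are both finite. Hence
\[
\limsup_n F_{\varepsilon_n}(\pi_{\varepsilon_n})\le F_\varepsilon(\pi_\varepsilon).
\]
Since \(J\ge0\), this also yields a uniform entropy bound \(\sup_n\Ent(\pi_{\varepsilon_n})\le (J(\pi_\varepsilon)+2\varepsilon\Ent(\pi_\varepsilon))/(\varepsilon/2)<\infty\).

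\emph{Lower bound.} We write
\[
F_\varepsilon(\pi_{\varepsilon_n})=F_{\varepsilon_n}(\pi_{\varepsilon_n})+(\varepsilon-\varepsilon_n)\Ent(\pi_{\varepsilon_n}).
\]
The correction term tends to zero by the uniform entropy bound. Weak lower semicontinuity of \(J\) and \(\Ent\) (Proposition~\ref{prop:standard-eot-facts}) gives lower semicontinuity of \(F_\varepsilon\). Therefore
\[
F_\varepsilon(\bar\pi)\le\liminf_n F_\varepsilon(\pi_{\varepsilon_n})=\liminf_n F_{\varepsilon_n}(\pi_{\varepsilon_n})\le F_\varepsilon(\pi_\varepsilon).
\]
Uniqueness then forces \(\bar\pi=\pi_\varepsilon\). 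Since every subsequence has a further subsequence converging to \(\pi_\varepsilon\), the whole sequence converges, which proves the continuity of \(\varepsilon\mapsto\pi_\varepsilon\).

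The only delicate point is the uniform entropy bound that controls \((\varepsilon-\varepsilon_n)\Ent(\pi_{\varepsilon_n})\). It depends on \(J\) being bounded below, which the shift to \(\widetilde C\ge0\) guarantees, and on \(\varepsilon\) being strictly positive, so that \(\varepsilon_n\) stays away from zero. Beyond that, the argument is the standard \(\Gamma\)-convergence-type stability of strictly convex minimisers.
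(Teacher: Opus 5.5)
Your proof is correct and follows essentially the same compactness, lower-semicontinuity and uniqueness argument as the paper, including the same \(\limsup\)/\(\liminf\) sandwich of \(F_{\varepsilon_n}(\pi_{\varepsilon_n})\). The only cosmetic difference is the uniform entropy bound: you compare with \(\pi_\varepsilon\) after shifting \(C\) to be nonnegative, whereas the paper compares with the product coupling \(\Lambda\) to get \(\varepsilon_n\Ent(\pi_{\varepsilon_n})\le J(\Lambda)-\inf C\).
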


\begin{proof}
We fix \(\varepsilon_0>0\), and let \(\varepsilon_n\to\varepsilon_0\). By weak
compactness of \(\Pi(\mu,\nu)\), it is enough to consider a weakly convergent
subsequence of \((\pi_{\varepsilon_n})\) and identify its limit. Let \(\pi_{\varepsilon_{n_k}}\Rightarrow\bar\pi\). Since \(\Lambda\in\Pi(\mu,\nu)\) and \(\Ent(\Lambda)=0\),
\[
F_{\varepsilon_{n_k}}(\pi_{\varepsilon_{n_k}})
\le F_{\varepsilon_{n_k}}(\Lambda)=J(\Lambda)<\infty.
\]
Let \(m:=\inf_{X\times Y}C>-\infty\). We obtain
\[
F_{\varepsilon_{n_k}}(\pi_{\varepsilon_{n_k}})
=
J(\pi_{\varepsilon_{n_k}})+\varepsilon_{n_k}\Ent(\pi_{\varepsilon_{n_k}})
\ge m+\varepsilon_{n_k}\Ent(\pi_{\varepsilon_{n_k}}),
\]
so
\[
\varepsilon_{n_k}\Ent(\pi_{\varepsilon_{n_k}})\le J(\Lambda)-m.
\]
Since \(\varepsilon_{n_k}\to\varepsilon_0>0\), the entropies are uniformly bounded.

By lower semicontinuity of \(J\) and \(\Ent\),
\[
J(\bar\pi)\le \liminf_{k\to\infty}J(\pi_{\varepsilon_{n_k}}),
\qquad
\Ent(\bar\pi)\le \liminf_{k\to\infty}\Ent(\pi_{\varepsilon_{n_k}}).
\]
From \(\sup_k \Ent(\pi_{\varepsilon_{n_k}})<\infty\) and
\(\varepsilon_{n_k}\to\varepsilon_0\), we deduce
\[
\lim_{k\to\infty}\bigl|(\varepsilon_{n_k}-\varepsilon_0)\Ent(\pi_{\varepsilon_{n_k}})\bigr|=0.
\]
Therefore,
\[
\liminf_{k\to\infty}\bigl(J(\pi_{\varepsilon_{n_k}})+\varepsilon_0\Ent(\pi_{\varepsilon_{n_k}})\bigr)
=
\liminf_{k\to\infty}F_{\varepsilon_{n_k}}(\pi_{\varepsilon_{n_k}}),
\]
and lower semicontinuity of \(J\) and \(\Ent\) gives
\[
F_{\varepsilon_0}(\bar\pi)\le \liminf_{k\to\infty}F_{\varepsilon_{n_k}}(\pi_{\varepsilon_{n_k}}).
\]

On the other hand, we notice that \(\pi_{\varepsilon_0}\) minimises \(F_{\varepsilon_0}\), and \(F_{\varepsilon_0}(\Lambda)<\infty\), which implies
\(\Ent(\pi_{\varepsilon_0})<\infty\), and
\[
\varepsilon\longmapsto F_\varepsilon(\pi_{\varepsilon_0})=J(\pi_{\varepsilon_0})+\varepsilon \Ent(\pi_{\varepsilon_0})
\]
is continuous at \(\varepsilon_0\). By minimality of \(\pi_{\varepsilon_{n_k}}\),
\[
\limsup_{k\to\infty}F_{\varepsilon_{n_k}}(\pi_{\varepsilon_{n_k}})
\le
\limsup_{k\to\infty}F_{\varepsilon_{n_k}}(\pi_{\varepsilon_0})
=
F_{\varepsilon_0}(\pi_{\varepsilon_0}).
\]
We now combine these estimates to obtain \(F_{\varepsilon_0}(\bar\pi)\le F_{\varepsilon_0}(\pi_{\varepsilon_0})\). By uniqueness of the minimiser of \(F_{\varepsilon_0}\), this gives \(\bar\pi=\pi_{\varepsilon_0}\).
\end{proof}

\begin{lemma}[Nested intersection of continua]\label{lem:nested-continua}
Let \((K_m)_{m\ge1}\) be a nested sequence of nonempty compact connected
subsets of a Hausdorff space. It follows that \(\bigcap_{m\ge1}K_m\) is nonempty, compact, and connected.
\end{lemma}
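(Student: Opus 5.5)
Write \(K:=\bigcap_{m\ge1}K_m\). The plan is the classical continuum argument, run through the finite intersection property and separation of disjoint compact sets in a Hausdorff space; no metrisability is needed.

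\emph{Nonemptiness and compactness.} Each \(K_m\) is compact in a Hausdorff space, hence closed. Thus \(K\) is a closed subset of the compact set \(K_1\), and so \(K\) is compact. The family \((K_m)\) consists of nonempty closed subsets of \(K_1\). It has the finite intersection property, because by nestedness any finite intersection equals \(K_{m}\) for the largest index \(m\) involved, which is nonempty. Compactness of \(K_1\) then gives \(K\neq\emptyset\).

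\emph{Connectedness.} I argue by contradiction. Suppose \(K=A\cup B\) with \(A,B\) disjoint, nonempty, and relatively closed in \(K\).
\begin{enumerate}[label=(\roman*)]
\item Since \(K\) is closed, \(A\) and \(B\) are closed in the ambient space and compact.
\item Disjoint compact sets in a Hausdorff space can be separated. I take disjoint open sets \(U\supset A\) and \(V\supset B\), proving this by the standard two-step argument: first separate a point from a compact set, then a compact set from a compact set.
\item The sets \(K_m\setminus(U\cup V)\) are closed subsets of \(K_1\) and nested. Their intersection is \(K\setminus(U\cup V)=\emptyset\). By the finite intersection property in the compact \(K_1\), some \(K_{m_0}\setminus(U\cup V)\) is empty, that is, \(K_{m_0}\subset U\cup V\).
\item Then \(K_{m_0}\cap U\) and \(K_{m_0}\cap V\) are disjoint relatively open sets covering \(K_{m_0}\). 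Both are nonempty, since they contain \(A\) and \(B\) respectively.
\end{enumerate}
This contradicts the connectedness of \(K_{m_0}\), so \(K\) is connected.

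The main obstacle is step (iii), the compactness step. One must use that the sets are closed and nested inside the compact \(K_1\), so that an empty intersection forces one stage to be already empty. Everything else is routine point-set topology.
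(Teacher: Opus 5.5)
Your proof is correct and follows essentially the same route as the paper. Both arguments use the finite intersection property in \(K_1\) for nonemptiness and compactness, then separate the two compact pieces of a putative disconnection by disjoint open sets \(U,V\), and finally use compactness to force some \(K_{m_0}\subset U\cup V\), contradicting the connectedness of \(K_{m_0}\); you also make explicit that each \(K_m\) is closed (being compact in a Hausdorff space), a detail the paper leaves implicit.
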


\begin{proof}
Nonemptiness and compactness follow immediately from the finite intersection property in the compact set \(K_1\). We set
\[
K:=\bigcap_{m\ge1}K_m.
\]
Suppose that \(K\) is disconnected. Given that \(K\) is compact and the ambient space
is Hausdorff, the two closed members of a separation of \(K\) have disjoint open
neighbourhoods in the ambient space. This implies the existence of disjoint open sets \(U,V\) such that
\[
K\subset U\cup V,
\qquad
K\cap U\neq\varnothing,
\qquad
K\cap V\neq\varnothing.
\]
Let us set \(F:=(U\cup V)^c\), which is a closed set. Suppose no \(K_m\) is contained in \(U\cup V\), then every \(K_m\cap F\) is nonempty. The sets \(K_m\cap F\) are nested and compact, so their intersection is nonempty. However, this contradicts
\[
K\cap F=\varnothing.
\]
It must hold that some \(K_m\subset U\cup V\). Since \(K_m\) is connected and \(U,V\) are disjoint open sets, either \(K_m\subset U\) or \(K_m\subset V\). This
contradicts the fact that \(K\subset K_m\) meets both \(U\) and \(V\). We conclude that
\(K\) is connected.
\end{proof}

\begin{proof}[Proof of Theorem~\ref{thm:selection-connected}]
For \(m\ge1\), we set
\[
K_m:=\overline{\{\pi_\varepsilon:0<\varepsilon<1/m\}}^{\,\mathrm{weak}}\subset\Pi(\mu,\nu).
\]
Notice that each \(K_m\) is nonempty and weakly compact, because \(\Pi(\mu,\nu)\) is weakly compact. By Proposition~\ref{prop:eps-continuity-Polish}, the map
\[
(0,\infty)\ni\varepsilon\longmapsto\pi_\varepsilon
\]
is weakly continuous. It follows that \(\{\pi_\varepsilon:0<\varepsilon<1/m\}\) is the continuous image of the connected interval \((0,1/m)\), so it is also connected, and its weak closure \(K_m\) is connected as well. The family \((K_m)_{m\ge1}\) is nested,
so by Lemma~\ref{lem:nested-continua},
\[
\bigcap_{m\ge1}K_m
\]
is nonempty, weakly compact, and connected. By Lemma~\ref{lem:omega-nested-closure},
\[
\Omega(\mu,\nu\mid C)=\bigcap_{m\ge1}K_m.
\]
Moreover, by Proposition~\ref{prop:cluster-optimal-Polish}, every element of
\(\Omega(\mu,\nu\mid C)\) is optimal. This proves that \(\Omega(\mu,\nu\mid C)\) is nonempty, weakly compact, connected, and contained in \(\mathrm{Opt}(\mu,\nu\mid C)\).
\end{proof}

\begin{proof}[Proof of Theorem~\ref{thm:intro-general}]
This follows directly from Theorem~\ref{thm:selection-connected}.
\end{proof}

\section{The dyadic Cantor-group model and its optimal face}\label{sec:cantor-model}

\subsection{Model definition}
We now define the compact model. The zero-cost set will be the union of two
graphs: the diagonal and one translate of the diagonal. Away from these graphs,
we separate the positive values of the cost by exponential scales. By a parity-dependent factor, we then create an oscillation along \(\varepsilon_n=e^{-2^n}\). Finally, the first-coordinate factor \(\gamma\) prevents the raw Gibbs kernel from already having the prescribed marginals.

We fix \(a\in(0,1)\) and \(\beta\in(0,\infty)\setminus\{1\}\). We do not need the condition
\(\beta\neq1\) for the scale separation itself. We impose it simply to avoid the degenerate case where the Gibbs reference is already a coupling, and to force a nontrivial Schr\"odinger projection. Let
\[
G:=\{0,1\}^{\mathbb N}
\]
with coordinatewise addition modulo \(2\), and endow it with the dyadic
ultrametric
\[
d(s,t):=
\begin{cases}
0, & s=t,\\
2^{-n(s,t)}, & s\neq t,
\end{cases}
\qquad
n(s,t):=\min\{k\ge1:\ s_k\neq t_k\}.
\]
Let \(\mu=\big(\tfrac12\delta_0+\tfrac12\delta_1\big)^{\otimes\N}\), which is
the Haar probability on \(G\), and set \(t_0:=(1,0,0,\dots)\). We define
\[
\alpha_n:=e^{-2^n},\qquad
L_n:=
\begin{cases}
1+a,& n \text{ even},\\
1-a,& n \text{ odd},
\end{cases}
\qquad n\ge2,
\]
and, for \(t=(t_k)_{k\ge1}\in G\), we set
\[
m(t):=\min\{k\ge2:\ t_k=1\}\in\{2,3,\dots\}\cup\{\infty\},
\]
with the convention that \(m(t)=\infty\) if and only if \(t_k=0\) for all
\(k\ge2\). We set
\begin{equation}\label{eq:cantor-Wdef}
W(t):=
\begin{cases}
0,& t=0 \text{ or } t=t_0,\\
\alpha_{m(t)},& t_1=0,\ t\neq0,\\
\alpha_{m(t)}L_{m(t)},& t_1=1,\ t\neq t_0,
\end{cases}
\end{equation}
\[
\gamma(x):=\ind{x_1=0}+\beta\,\ind{x_1=1}.
\]

\begin{definition}[Dyadic Cantor-group model]\label{def:cantor-explicit-model}
Let \(X=Y:=G\), \(\nu:=\mu\), and \(\Lambda:=\mu\otimes\mu\). The cost is
\begin{equation}\label{eq:cantor-cost}
C(x,y):=\gamma(x)\,W(y-x),\qquad (x,y)\in G^2.
\end{equation}
\end{definition}
We use the notation
\[
D(x,y):=y-x,\qquad
U_0:=\{t\in G:t_1=0\},\qquad
U_1:=\{t\in G:t_1=1\}.
\]
For \(n\ge2\), we set
\[
\begin{aligned}
A_n^0&:=\{t\in G:\ t_1=0,\ t_2=\cdots=t_{n-1}=0,\ t_n=1\},\\
A_n^1&:=\{t\in G:\ t_1=1,\ t_2=\cdots=t_{n-1}=0,\ t_n=1\},
\end{aligned}
\]
and
\[
Z:=\{(x,y)\in G^2:\ y-x\in\{0,t_0\}\}.
\]
By Lemma~\ref{lem:cantor-continuity}, \(Z\) is the zero-cost set of \(C\). We also use the two graph couplings
\begin{equation}\label{eq:cantor-graph-couplings}
P_0:=(\Id,\Id)_\#\mu,\qquad
P_{t_0}:=(\Id,T_{t_0})_\#\mu,\qquad
T_{t_0}(x):=x+t_0.
\end{equation}

\subsection{Basic properties and zero-cost set}\label{sec:cantor-def-zero}
Since \(G\) is a compact topological group and \(\mu\) is Haar probability, translations preserve \(\mu\). In particular, for every integrable \(f:G\times G\to\R\),
\[
\int_G\int_G f(x,y-x)\,\mu(\dd y)\,\mu(\dd x)
=\int_G\int_G f(x,t)\,\mu(\dd t)\,\mu(\dd x).
\]

\begin{lemma}[Continuity and zero set of \(W\) and \(C\)]\label{lem:cantor-continuity}
The function $W$ is continuous and bounded on $G$. Moreover, $W(t)=0$ if and
only if \(t\in\{0,t_0\}\). In particular, the cost \(C\) is continuous and bounded on \(G^2\), and
\[
C(x,y)=0\quad\Longleftrightarrow\quad y-x\in\{0,t_0\}.
\]
\end{lemma}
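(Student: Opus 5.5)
The plan is to verify continuity of \(W\) pointwise, splitting into the two zeros \(t\in\{0,t_0\}\) and all other points. First, \(W\) is bounded: \(\alpha_n\le e^{-4}\) and \(L_n\le 1+a<2\), so \(0\le W\le 2e^{-4}\). It is nonnegative, and it is strictly positive off \(\{0,t_0\}\). Indeed, if \(t\notin\{0,t_0\}\), then \(t_k=1\) for some \(k\ge2\), so \(m(t)<\infty\), and then \(\alpha_{m(t)}>0\) and \(L_{m(t)}\ge 1-a>0\). Together with the definition \(W(0)=W(t_0)=0\), this gives \(W(t)=0\iff t\in\{0,t_0\}\).

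For continuity at a point \(t\notin\{0,t_0\}\), set \(m=m(t)<\infty\). Any \(s\) with \(d(s,t)<2^{-m}\) agrees with \(t\) in coordinates \(1,\dots,m\). Hence \(s_1=t_1\), \(m(s)=m\), and \(s\notin\{0,t_0\}\), so \(W(s)=W(t)\). Thus \(W\) is locally constant on \(G\setminus\{0,t_0\}\).

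For continuity at \(0\) or \(t_0\), take \(s\) with \(d(s,t)<2^{-N}\) for \(t\in\{0,t_0\}\). Then \(s_1=t_1\) and \(s_2=\dots=s_N=0\). Either \(s=t\), or \(m(s)>N\) and
\[
0\le W(s)\le (1+a)\alpha_{m(s)}\le 2e^{-2^{N+1}},
\]
which tends to \(0=W(t)\). This is the only step with any content, namely the decay of \(\alpha_n\) along the tail, and it is immediate.

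For \(C\), the map \((x,y)\mapsto y-x\) is continuous on the compact group \(G^2\), and \(\gamma\) is continuous because it depends only on \(x_1\), which is locally constant. So \(C=\gamma\cdot(W\circ D)\) is continuous, with \(0\le C\le 2\max(1,\beta)e^{-4}\). Finally, \(\gamma\ge\min(1,\beta)>0\), so \(C(x,y)=0\iff W(y-x)=0\iff y-x\in\{0,t_0\}\).
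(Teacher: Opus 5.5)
Your proof is correct and follows essentially the same route as the paper: local constancy of \(W\) on \(G\setminus\{0,t_0\}\) (you use dyadic balls of radius \(2^{-m(t)}\) where the paper uses the clopen cylinders \(A_n^0,A_n^1\)), the tail bound \(W\le(1+a)\alpha_{N+1}\) near \(0\) and \(t_0\), and then continuity of \(\gamma\) and \(D\) together with \(\gamma>0\) for the statements about \(C\). The only difference is cosmetic: you argue with metric balls where the paper uses convergent sequences, and your explicit bounds \(2e^{-4}\) and \(2\max(1,\beta)e^{-4}\) replace the paper's \((1+a)\alpha_2\).
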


\begin{proof}
We first notice that the cylinder sets \(A_n^0,A_n^1\) are clopen in \(G\). On \(A_n^0\), we have \(m(t)=n\) and \(W\equiv\alpha_n\). On \(A_n^1\), \(m(t)=n\) and
\(W\equiv \alpha_nL_n\) hold. We infer that \(W\) is locally constant on \(G\setminus\{0,t_0\}\), and is continuous there. It remains to consider \(0\) and \(t_0\). Let \(t^{(j)}\to0\), and note that for every fixed \(N\), the first \(N\) coordinates of \(t^{(j)}\) eventually vanish. For all large \(j\), either \(t^{(j)}=0\) or \(m(t^{(j)})\ge N+1\). Therefore,
\[
0\le W(t^{(j)})\le \alpha_{N+1}.
\]
If we first let \(j\to\infty\) and then \(N\to\infty\), we get \(W(t^{(j)})\to0=W(0)\). The proof at \(t_0\) is identical. If \(t^{(j)}\to t_0\), then for every \(N\), eventually, \(t^{(j)}_1=1\) and
\(t^{(j)}_2=\cdots=t^{(j)}_N=0\). It follows that either \(t^{(j)}=t_0\) or \(m(t^{(j)})\ge N+1\), so
\[
0\le W(t^{(j)})\le (1+a)\alpha_{N+1}
\]
for all large \(j\). Again, \(W(t^{(j)})\to0=W(t_0)\). Boundedness follows directly from \(0\le W\le (1+a)\alpha_2\). By definition, \(W(t)=0\) if and only if \(m(t)=\infty\), or \(t_k=0\) for all \(k\ge2\), that is, \(t\in\{0,t_0\}\).

Given that \(\gamma\) is constant on the two clopen cylinders \(\{x_1=0\}\) and
\(\{x_1=1\}\), it is continuous and bounded on \(G\). The map \((x,y)\mapsto y-x\) is continuous on \(G^2\). It follows that \((x,y)\mapsto W(y-x)\) is continuous on \(G^2\), and \(C(x,y)=\gamma(x)W(y-x)\) is continuous and bounded. Since \(\gamma(x)>0\) for
all \(x\), we also have
\[
C(x,y)=0 \quad\Longleftrightarrow\quad W(y-x)=0 \quad\Longleftrightarrow\quad y-x\in\{0,t_0\}.
\]
\end{proof}

\begin{lemma}[Lipschitz regularity]\label{lem:cantor-lipschitz}
If we endow \(G^2\) with the maximum product metric induced by the dyadic ultrametric
\(d\) on \(G\), then \(C\) is Lipschitz for this metric.
\end{lemma}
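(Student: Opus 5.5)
The plan is to prove that \(W\) is Lipschitz on \((G,d)\) and that \(\gamma\) is Lipschitz and bounded. Then \(C(x,y)=\gamma(x)W(y-x)\) is Lipschitz, by the product rule for bounded Lipschitz functions together with the fact that subtraction is an isometry-type map for the ultrametric.

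\emph{Step 1: the group structure.} Note that \(d(s,t)=d(s-t,0)=d(s+u,t+u)\), since \(n(s,t)\) depends only on \(s-t\) (coordinatewise XOR). Hence
\[
d(y-x,y'-x')\le\max\{d(y-x,y'-x),d(y'-x,y'-x')\}=\max\{d(y,y'),d(x,x')\},
\]
so \((x,y)\mapsto y-x\) is \(1\)-Lipschitz from the max metric on \(G^2\) to \(G\).

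\emph{Step 2: \(\gamma\).} \(\gamma\) is constant on \(\{x_1=0\}\) and on \(\{x_1=1\}\). If \(x_1\neq x'_1\) then \(d(x,x')=1/2\), so \(|\gamma(x)-\gamma(x')|=|1-\beta|\le 2|1-\beta|\,d(x,x')\).

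\emph{Step 3: \(W\), the main step.} Take \(s\neq t\) with \(n=n(s,t)\), so \(d(s,t)=2^{-n}\). If \(n=1\), use the bound \(|W(s)-W(t)|\le 2(1+a)\alpha_2\le 4(1+a)\alpha_2\,d(s,t)\). If \(n\ge2\), then \(s_1=t_1\) and \(s_k=t_k\) for \(k<n\). Consider two cases.

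\emph{Case (i):} some \(k\in\{2,\dots,n-1\}\) has \(s_k=1\). Then \(m(s)=m(t)\) and \(s,t\) lie in the same cylinder \(A^{i}_{m}\), so \(W(s)=W(t)\).

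\emph{Case (ii):} \(s_k=t_k=0\) for \(2\le k<n\). Then \(m(s),m(t)\ge n\), or the point is \(0\) or \(t_0\), where \(W=0\). So
\[
|W(s)-W(t)|\le\max\{W(s),W(t)\}\le(1+a)\alpha_n=(1+a)e^{-2^n}.
\]
It remains to show \(e^{-2^n}\le K2^{-n}\) for all \(n\), which holds because \(2^n e^{-2^n}\le \sup_{u>0}ue^{-u}=e^{-1}\). Hence \(|W(s)-W(t)|\le (1+a)e^{-1}d(s,t)\), and \(W\) is Lipschitz with some constant \(L_W\).

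\emph{Step 4: combine.} By Step 1, \(W\circ D\) is \(L_W\)-Lipschitz on \(G^2\), and by Lemma~\ref{lem:cantor-continuity} it is bounded by \((1+a)\alpha_2\). Also \(\gamma\le\max\{1,\beta\}\). Then
\[
|C(x,y)-C(x',y')|\le |\gamma(x)|\,|W(y-x)-W(y'-x')|+|W(y'-x')|\,|\gamma(x)-\gamma(x')|,
\]
which is bounded by a constant times \(\max\{d(x,x'),d(y,y')\}\).

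The only genuinely delicate point is Case (ii) of Step 3: we need the values of \(W\) to decay faster than the ultrametric scale \(2^{-n}\), and the double-exponential choice \(\alpha_n=e^{-2^n}\) supplies this with room to spare.
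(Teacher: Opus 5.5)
Your proof is correct and follows essentially the same route as the paper: a case split on the first differing coordinate showing $W$ is Lipschitz, using that $\alpha_n=e^{-2^n}$ decays faster than $2^{-n}$, then Lipschitzness of $D$ and $\gamma$, and finally the product rule for bounded Lipschitz functions. Your version is only more explicit. You derive the $1$-Lipschitz property of $(x,y)\mapsto y-x$ from translation invariance and the ultrametric inequality, and you give the concrete bound $2^n e^{-2^n}\le e^{-1}$ where the paper only notes that the supremum is finite.
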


\begin{proof}
We first prove that \(W\) is Lipschitz on \(G\). To this end, let \(s\ne t\), and let
\(N\) be the first coordinate at which they differ. If \(N=1\), then
\[
|W(s)-W(t)|\le (1+a)\alpha_2
\le 2(1+a)\alpha_2\, d(s,t).
\]
If \(N\ge2\) and some coordinate among \(2,\ldots,N-1\) is nonzero, then
\(m(s)=m(t)<N\) and \(s_1=t_1\), so \(W(s)=W(t)\). Otherwise, the coordinates
\(2,\ldots,N-1\) of both points vanish. In that case, both \(W(s)\) and \(W(t)\) are in the interval \([0,(1+a)\alpha_N]\), so
\[
|W(s)-W(t)|\le (1+a)\alpha_N
=(1+a)2^N e^{-2^N}\,d(s,t).
\]
Since \(\sup_{N\ge2}2^Ne^{-2^N}<\infty\), this proves that \(W\) is Lipschitz.

The difference map \(D(x,y)=y-x\) is Lipschitz from \(G^2\), with the maximum
product metric, to \(G\), because the dyadic metric is translation invariant and
ultrametric. Notice that the factor \(\gamma\) is also Lipschitz. Indeed, it is constant on the two first-coordinate cylinders, and points in different such cylinders have
distance \(1/2\). Since \(W\) and \(\gamma\) are bounded Lipschitz functions and
\(C(x,y)=\gamma(x)W(D(x,y))\), the cost \(C\) is Lipschitz on \(G^2\).
\end{proof}

\subsection{The optimal face}\label{sec:cantor-opt-face}
We shall also use the fact that \(\mu\) is atomless. Indeed, for every \(x\in G\), the
cylinder fixing the first \(n\) coordinates of \(x\) has mass \(2^{-n}\), and
the intersection of these cylinders is \(\{x\}\), so \(\mu(\{x\})=0\).

\begin{proposition}[Optimal face in the Cantor model]\label{prop:cantor-opt-face}
For the dyadic Cantor-group cost,
\[
\inf_{P\in\Pi(\mu,\mu)}\int C\,\dd P = 0,
\qquad
\mathrm{Opt}(\mu,\mu\mid C)=\{P\in\Pi(\mu,\mu):P(Z)=1\}.
\]
In particular, \(P_0\) and \(P_{t_0}\) are distinct optimal couplings, and every
optimal plan is singular with respect to \(\Lambda=\mu\otimes\mu\).
\end{proposition}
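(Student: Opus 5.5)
The plan is to exhibit zero-cost couplings, read off the infimum from nonnegativity of the cost, and then characterise the optimal face through the zero set given by Lemma~\ref{lem:cantor-continuity}.

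\emph{Infimum and the two graph couplings.} Since \(\gamma>0\) and \(W\ge0\), we have \(C\ge0\), so the infimum is at least \(0\). Because \(\mu\) is Haar probability, the translation \(T_{t_0}\) preserves \(\mu\). Hence both \(P_0=(\Id,\Id)_\#\mu\) and \(P_{t_0}=(\Id,T_{t_0})_\#\mu\) lie in \(\Pi(\mu,\mu)\). Their supports lie in \(\{y-x=0\}\) and \(\{y-x=t_0\}\) respectively, so both are contained in \(Z\). By Lemma~\ref{lem:cantor-continuity}, \(C\) vanishes on \(Z\), so both couplings have cost \(0\). Therefore the infimum equals \(0\), and \(P_0,P_{t_0}\) are optimal.

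\emph{Characterisation of the optimal face.} A coupling \(P\) is optimal if and only if \(\int C\,\dd P=0\). Since \(C\ge0\) is continuous, this holds if and only if \(C=0\) \(P\)-almost surely, that is, \(P(\{C=0\})=1\). By Lemma~\ref{lem:cantor-continuity}, \(\{C=0\}=Z\), which gives \(\mathrm{Opt}(\mu,\mu\mid C)=\{P\in\Pi(\mu,\mu):P(Z)=1\}\). We also note that \(Z\) is closed, being the preimage of a finite set under the continuous map \(D\).

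\emph{Distinctness and singularity.} For distinctness, observe that \(P_0\) is carried by \(\{y=x\}\) and \(P_{t_0}\) by \(\{y=x+t_0\}\). These sets are disjoint because \(t_0\neq0\), so \(P_0(\{y=x\})=1\) while \(P_{t_0}(\{y=x\})=0\). For singularity, it suffices to show \(\Lambda(Z)=0\), since then every optimal \(P\) is concentrated on the \(\Lambda\)-null set \(Z\). We compute with the translation identity for Haar measure (Fubini):
\[
\Lambda(Z)=\int_G\mu\bigl(\{y:y-x\in\{0,t_0\}\}\bigr)\,\mu(\dd x)=\int_G\mu(\{x,x+t_0\})\,\mu(\dd x)=0,
\]
where the last equality uses that \(\mu\) is atomless, as established above.

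No step is a genuine obstacle. The only points requiring care are measurability of \(Z\), which follows from closedness, and the use of atomlessness to get \(\Lambda(Z)=0\).
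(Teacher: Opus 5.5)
Your proposal is correct and follows essentially the same route as the paper: zero cost of the two graph couplings plus $C\ge0$ gives value $0$; nonnegativity forces $P(Z)=1$ exactly on the optimal face; and $\Lambda(Z)=0$ follows from Fubini with two-point sections and atomlessness. Your explicit distinctness check via $P_0(\{y=x\})=1\neq0=P_{t_0}(\{y=x\})$ and your closedness argument for $Z$ are small details the paper leaves implicit, and they fill those in correctly.
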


\begin{proof}
By the translation invariance of Haar measure, we obtain \(P_0,P_{t_0}\in\Pi(\mu,\mu)\). On \(\operatorname{supp} P_0\), we have \(y-x=0\), while on
\(\operatorname{supp} P_{t_0}\), \(y-x=t_0\) holds. By Lemma~\ref{lem:cantor-continuity},
\[
\int C\,\dd P_0=\int C\,\dd P_{t_0}=0.
\]
Since \(C\ge 0\), the optimal value is \(0\).

If \(P\in\Pi(\mu,\mu)\) satisfies \(P(Z)=1\), then Lemma~\ref{lem:cantor-continuity}
gives \(C=0\) \(P\)-a.s., hence \(P\) is optimal. On the other hand, if
\(P\in\mathrm{Opt}(\mu,\mu\mid C)\), then \(\int C\,\dd P=0\). Since \(C\ge0\), we get \(C=0\) \(P\)-a.s., so \(P(Z)=1\).

It remains to prove singularity. Note that the set \(Z\) is Borel, and each vertical
section \(Z_x=\{x,x+t_0\}\) has two points. Since \(\mu\) is atomless,
\[
\Lambda(Z)=\int_G \mu(Z_x)\,\mu(\dd x)=0.
\]
Every optimal plan \(P\) satisfies \(P(Z)=1\), while \(\Lambda(Z)=0\), which implies
\(P\perp\Lambda\). By the definition of relative entropy,
\[
\mathrm{KL}(P\|\Lambda)=+\infty
\]
for every \(P\in \mathrm{Opt}(\mu,\mu\mid C)\).
\end{proof}

In the argument below, we consider the diagonally \(H\)-invariant part. Lemma~\ref{lem:cantor-h-classification} shows that this part is precisely
\[
\{\,wP_0+(1-w)P_{t_0}:0\le w\le1\,\}.
\]

\section{Schr\"odinger projection in the dyadic model}\label{sec:cantor-schrodinger-main}
For the rest of the paper, we set
\[
H:=\{s\in G:s_1=0\},\qquad
G_0:=H,\qquad
G_1:=t_0+H.
\]
In the Cantor model, the Gibbs reference measure
\(R_\varepsilon\propto e^{-C/\varepsilon}\Lambda\) does not have marginal
\(\mu\). Therefore, the entropic minimiser is a nontrivial Schr\"odinger
projection. Notice that the diagonal \(H\)-symmetry reduces this projection to a
\(2\times2\) Sinkhorn scaling. Lemma~\ref{lem:cantor-R-not-coupling} shows this clearly when \(\beta\neq1\). In this section, we use \(P_\varepsilon\) to denote the minimiser in the Cantor model, while we use \(\pi_\varepsilon\) for the general theory.

\subsection{Bounded-cost facts for the Cantor Schr\"odinger projection}
\label{sec:cantor-schrodinger-prelim}
We analyse two bounded-cost facts for the Cantor-model argument. We write
\(X\) and \(Y\) for compact metric spaces, \(\mu\in\mathcal P(X)\), \(\nu\in\mathcal P(Y)\), \(C:X\times Y\to[0,\infty)\) is bounded and continuous,
\(\Lambda:=\mu\otimes\nu\), and \(F_\varepsilon\) is the entropic functional
defined in \eqref{eq:Feps-def}. For \(\varepsilon>0\), we set
\[
c_\varepsilon:=\int_{X\times Y} e^{-C/\varepsilon}\,\dd\Lambda\in(0,\infty),
\qquad
\frac{\dd R_\varepsilon}{\dd \Lambda}(x,y):=c_\varepsilon^{-1}e^{-C(x,y)/\varepsilon}.
\]

\paragraph{Entropy identity}\label{sec:prelim-entropy}
\begin{lemma}[Entropy identity]\label{lem:entropy-identity}
Let us assume that \(C\) is bounded. For every \(P\in\Pi(\mu,\nu)\),
\[
F_\varepsilon(P)=\varepsilon\,\mathrm{KL}(P\|R_\varepsilon)-\varepsilon\log c_\varepsilon,
\]
with the convention \(+\infty=+\infty-\varepsilon\log c_\varepsilon\).
\end{lemma}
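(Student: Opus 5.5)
The plan is to split into two cases, according to whether \(P\ll\Lambda\), and in the absolutely continuous case to compute \(\mathrm{KL}(P\|R_\varepsilon)\) by the chain rule for densities. First, I would record that \(R_\varepsilon\) and \(\Lambda\) are mutually absolutely continuous. Since \(0\le C\le\|C\|_\infty\), the density \(c_\varepsilon^{-1}e^{-C/\varepsilon}\) is bounded above and below by strictly positive constants:
\[
c_\varepsilon^{-1}e^{-\|C\|_\infty/\varepsilon}\le \frac{\dd R_\varepsilon}{\dd\Lambda}\le c_\varepsilon^{-1}.
\]
Here \(c_\varepsilon\in[e^{-\|C\|_\infty/\varepsilon},1]\), so it is finite and positive. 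It follows that \(P\ll\Lambda\) if and only if \(P\ll R_\varepsilon\).

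If \(P\not\ll\Lambda\), then \(\Ent(P)=+\infty\). Since \(\int C\,\dd P\) is finite, being bounded by \(\|C\|_\infty\), we get \(F_\varepsilon(P)=+\infty\). Also \(P\not\ll R_\varepsilon\), so \(\mathrm{KL}(P\|R_\varepsilon)=+\infty\), and both sides equal \(+\infty\) under the stated convention.

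If \(P\ll\Lambda\), let \(p:=\dd P/\dd\Lambda\). Then
\[
\frac{\dd P}{\dd R_\varepsilon}=p\,c_\varepsilon e^{C/\varepsilon},
\qquad
\log\frac{\dd P}{\dd R_\varepsilon}=\log p+\frac{C}{\varepsilon}+\log c_\varepsilon
\quad P\text{-a.s.}
\]
Integrating against \(P\) should give \(\mathrm{KL}(P\|R_\varepsilon)=\Ent(P)+\varepsilon^{-1}\int C\,\dd P+\log c_\varepsilon\). Multiplying by \(\varepsilon\) and rearranging then yields the identity.

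The one step needing care, and the main (mild) obstacle, is justifying the splitting of the integral when \(\Ent(P)\) may be \(+\infty\). The way around this is that \(C/\varepsilon+\log c_\varepsilon\) is a bounded measurable function, hence \(P\)-integrable. The negative part of \(\log p\) is \(P\)-integrable because \(\int(\log p)^-\,p\,\dd\Lambda\le e^{-1}\). So \(\int\log\frac{\dd P}{\dd R_\varepsilon}\,\dd P\) equals \(\int\log p\,\dd P\) plus a finite quantity, with both sides taking values in \((-\infty,+\infty]\). In particular the two sides are simultaneously \(+\infty\) or simultaneously finite, and linearity of the integral applies in the finite case.
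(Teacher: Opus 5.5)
Your proposal is correct and follows essentially the same route as the paper: bounded two-sided density of $R_\varepsilon$ with respect to $\Lambda$, the trivial case when absolute continuity fails, and the density chain rule $\log\frac{dP}{dR_\varepsilon}=\log p+C/\varepsilon+\log c_\varepsilon$ integrated against $P$. Your extra justification for splitting the integral, namely that the added term is bounded and $(\log p)^-$ is $P$-integrable, supplies a detail the paper leaves implicit.
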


\begin{proof}
We set
\[
r_\varepsilon:=\frac{\dd R_\varepsilon}{\dd\Lambda}=c_\varepsilon^{-1}e^{-C/\varepsilon}.
\]
Since \(C\) is bounded, \(r_\varepsilon\) is bounded above and below by positive
constants. It follows that \(R_\varepsilon\sim\Lambda\), and \(P\ll\Lambda\) if and only
if \(P\ll R_\varepsilon\). In the case when this absolute continuity fails, both sides of the
identity are \(+\infty\). Let us assume \(P\ll\Lambda\). We then have
\[
\log\frac{\dd P}{\dd R_\varepsilon}
=\log\frac{\dd P}{\dd\Lambda}+\frac{C}{\varepsilon}+\log c_\varepsilon.
\]
Once we integrate with respect to \(P\), we obtain
\[
\mathrm{KL}(P\|R_\varepsilon)
=\mathrm{KL}(P\|\Lambda)+\frac{1}{\varepsilon}\int C\,\dd P+\log c_\varepsilon.
\]
By rearrangement, we have
\[
F_\varepsilon(P)=\varepsilon\,\mathrm{KL}(P\|R_\varepsilon)-\varepsilon\log c_\varepsilon.
\]
\end{proof}

\paragraph{A sufficient criterion for entropic optimality}\label{sec:prelim-factor}
\begin{lemma}[A sufficient criterion for entropic optimality]\label{lem:schrodinger-sufficient}
Let us fix \(\varepsilon>0\) and assume that \(C\) is bounded. Let \(P\in\Pi(\mu,\nu)\) satisfy
\[
\frac{\dd P}{\dd R_\varepsilon}(x,y)=a(x)b(y),
\]
where \(a:X\to(0,\infty)\) and \(b:Y\to(0,\infty)\) are measurable and bounded above and below by positive constants. It follows that \(P\) is the unique minimiser of
\(Q\mapsto \mathrm{KL}(Q\|R_\varepsilon)\) over \(\Pi(\mu,\nu)\). In particular, \(P\) is the unique minimiser of \(F_\varepsilon\) over \(\Pi(\mu,\nu)\).
\end{lemma}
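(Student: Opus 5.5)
The plan is to write the objective in Lemma~\ref{lem:entropy-identity}'s form and use a direct entropy decomposition. The key computation is a Pythagorean-type identity for \(Q\) relative to \(P\) and \(R_\varepsilon\).

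First, take any \(Q\in\Pi(\mu,\nu)\) with \(\mathrm{KL}(Q\|R_\varepsilon)<\infty\); otherwise there is nothing to prove, since \(\mathrm{KL}(P\|R_\varepsilon)\) will turn out to be finite. Then \(Q\ll R_\varepsilon\). Because \(a,b\) are bounded above and below by positive constants, we have \(P\sim R_\varepsilon\), so \(Q\ll P\) as well. The chain rule for densities then gives, \(Q\)-a.s.,
\[
\log\frac{\dd Q}{\dd R_\varepsilon}=\log\frac{\dd Q}{\dd P}+\log a(x)+\log b(y).
\]
Both \(\log a\) and \(\log b\) are bounded, so everything is integrable. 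Integrating against \(Q\) and using that \(Q\) has marginals \(\mu,\nu\), we obtain
\[
\mathrm{KL}(Q\|R_\varepsilon)=\mathrm{KL}(Q\|P)+\int\log a\,\dd\mu+\int\log b\,\dd\nu .
\]

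Second, apply the same identity with \(Q=P\). Since \(P\) has the same marginals, the constant \(\int\log a\,\dd\mu+\int\log b\,\dd\nu\) equals \(\mathrm{KL}(P\|R_\varepsilon)\), which is finite by boundedness of the density. This yields
\[
\mathrm{KL}(Q\|R_\varepsilon)=\mathrm{KL}(Q\|P)+\mathrm{KL}(P\|R_\varepsilon).
\]
Hence \(\mathrm{KL}(Q\|R_\varepsilon)\ge\mathrm{KL}(P\|R_\varepsilon)\), with equality if and only if \(\mathrm{KL}(Q\|P)=0\), that is, \(Q=P\). This proves that \(P\) is the unique minimiser.

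Third, Lemma~\ref{lem:entropy-identity} shows that \(F_\varepsilon\) is an increasing affine function of \(\mathrm{KL}(\cdot\|R_\varepsilon)\) on \(\Pi(\mu,\nu)\), with the same \(+\infty\) convention. So \(P\) is also the unique minimiser of \(F_\varepsilon\), which is consistent with the uniqueness in Proposition~\ref{prop:standard-eot-facts}.

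There is no serious obstacle. The only care needed is the integrability bookkeeping: we must justify splitting the logarithm and integrating termwise when \(\mathrm{KL}(Q\|P)\) might a priori be infinite. The bounds on \(a\) and \(b\) handle this, because they make \(\log\frac{\dd Q}{\dd R_\varepsilon}\) and \(\log\frac{\dd Q}{\dd P}\) differ by a bounded function. Consequently their negative parts are simultaneously integrable, and both relative entropies are finite or infinite together.
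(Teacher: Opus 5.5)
Your proof is correct and is essentially the paper's argument. Both derive the Pythagorean identity \(\mathrm{KL}(Q\|R_\varepsilon)=\mathrm{KL}(Q\|P)+\mathrm{KL}(P\|R_\varepsilon)\) from the chain rule and the fact that \(\int(\log a+\log b)\,\dd Q\) depends only on the marginals, and then pass to \(F_\varepsilon\) via Lemma~\ref{lem:entropy-identity}; your remark that the two log-densities differ by a bounded function spells out integrability bookkeeping the paper leaves implicit.
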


\begin{proof}
Since \(a\) and \(b\) are bounded above and below away from \(0\), the density
\(\frac{\dd P}{\dd R_\varepsilon}=a\otimes b\) is bounded above and below away
from \(0\). In particular, \(P\sim R_\varepsilon\). Let \(Q\in\Pi(\mu,\nu)\).
If \(Q\not\ll R_\varepsilon\), then \(\mathrm{KL}(Q\|R_\varepsilon)=+\infty\), and the minimality claim follows immediately. We may assume that \(Q\ll R_\varepsilon\) and
\(\mathrm{KL}(Q\|R_\varepsilon)<\infty\). This implies \(Q\ll P\), and since
\[
\log\!\Bigl(\frac{\dd P}{\dd R_\varepsilon}\Bigr)=\log a(x)+\log b(y)
\]
is bounded, by the chain rule, we obtain
\[
\mathrm{KL}(Q\|R_\varepsilon)-\mathrm{KL}(P\|R_\varepsilon)
=\mathrm{KL}(Q\|P)+\int_{X\times Y}\log\!\Big(\frac{\dd P}{\dd R_\varepsilon}\Big)\,\dd(Q-P).
\]
Note that the last integral vanishes because \(Q\) and \(P\) have the same marginals.
We then have
\[
\mathrm{KL}(Q\|R_\varepsilon)-\mathrm{KL}(P\|R_\varepsilon)=\mathrm{KL}(Q\|P)\ge0.
\]
This proves that \(P\) minimises \(\mathrm{KL}(\cdot\|R_\varepsilon)\). If
equality holds, then \(\mathrm{KL}(Q\|P)=0\), so \(Q=P\), and the minimiser is
unique. By Lemma~\ref{lem:entropy-identity}, the minimisers of \(\mathrm{KL}(\cdot\|R_\varepsilon)\) are exactly the minimisers of \(F_\varepsilon\).
\end{proof}

\subsection{An averaging lemma on \texorpdfstring{$H$}{H}}

\begin{lemma}[Haar averaging on $H$]\label{lem:cantor-Haar-averaging}
Let $H=\{s\in G:s_1=0\}$ and let $h$ denote the Haar probability measure on $H$
(or $h=2\,\mu|_{H}$). Let \(q:G\to\mathbb R\) be Borel measurable and integrable with respect to \(\mu\), and define
\[
(\mathsf A q)(x):=\int_H q(x+s)\,h(\dd s),\qquad x\in G.
\]
The following statements hold:
\begin{enumerate}[label=(\roman*),leftmargin=*]
\item $\mathsf A q$ is Borel measurable and belongs to $L^1(\mu)$;
\item $\mathsf A q$ is \textit{pointwise $H$-invariant}:
\[
(\mathsf A q)(x+r)=(\mathsf A q)(x)\qquad\text{for all }x\in G,\ r\in H;
\]
\item if for every $r\in H$, $q(x+r)=q(x)$ for $\mu$-a.e.\ $x$, then
\[
\mathsf A q=q\qquad \mu\text{-a.e.}
\]
\end{enumerate}
\end{lemma}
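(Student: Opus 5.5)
The plan is to write everything as integrals over the compact group \(G\) and to use Fubini–Tonelli together with the translation invariance of the Haar measures \(\mu\) on \(G\) and \(h\) on \(H\).

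For (i), consider the map \(\Phi:G\times H\to G\), \(\Phi(x,s)=x+s\). It is continuous, so \((x,s)\mapsto q(x+s)\) is Borel on \(G\times H\). We then compute
\[
\int_G\int_H |q(x+s)|\,h(\dd s)\,\mu(\dd x)=\int_H\int_G|q(x+s)|\,\mu(\dd x)\,h(\dd s)=\int_G|q|\,\dd\mu<\infty,
\]
using Tonelli and the invariance of \(\mu\) under translation by \(s\). Fubini then gives three things:
\begin{itemize}
\item the set \(E\) of \(x\) with \(\int_H|q(x+s)|\,h(\dd s)<\infty\) is Borel and has full measure;
\item \(\mathsf A q\) is Borel measurable on \(E\);
\item \(\|\mathsf A q\|_{L^1(\mu)}\le\|q\|_{L^1(\mu)}\).
\end{itemize}
Off \(E\) we set \(\mathsf A q:=0\). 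This convention is the one technical point to handle, and it interacts with (ii).

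For (ii), first note that \(E\) is \(H\)-invariant. For \(r\in H\), the change of variables \(s\mapsto s+r\) preserves \(h\), so
\[
\int_H|q(x+r+s)|\,h(\dd s)=\int_H|q(x+s)|\,h(\dd s).
\]
The same substitution applied to \(q\) itself gives \((\mathsf A q)(x+r)=(\mathsf A q)(x)\) for every \(x\in E\). Off \(E\) both sides are \(0\), so the identity holds pointwise everywhere.

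For (iii), the goal is to show \(\int_G|\mathsf A q-q|\,\dd\mu=0\). We estimate
\[
\int_G|\mathsf A q(x)-q(x)|\,\mu(\dd x)\le\int_G\int_H|q(x+s)-q(x)|\,h(\dd s)\,\mu(\dd x)=\int_H\Bigl(\int_G|q(x+s)-q(x)|\,\mu(\dd x)\Bigr)h(\dd s).
\]
The inequality uses \(h(H)=1\) and holds on \(E\), which has full measure. The equality is Tonelli, since \((x,s)\mapsto|q(x+s)-q(x)|\) is jointly Borel. By hypothesis, for each fixed \(s\in H\) the inner integral vanishes. The outer integral is therefore \(0\), and so \(\mathsf A q=q\) \(\mu\)-a.e.

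The main obstacle is only measure-theoretic bookkeeping: ensuring joint measurability of \((x,s)\mapsto q(x+s)\) and handling the null set where the defining integral may diverge so that invariance holds pointwise, not just a.e. Both points are resolved as above. The hypothesis in (iii) is \(s\)-by-\(s\) with exceptional sets depending on \(s\), and this is exactly why we integrate in \(x\) first and only afterwards in \(s\).
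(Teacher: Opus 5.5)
Your proof is correct. Part (iii) is essentially the paper's argument: the paper integrates \(\Delta(x)=\int_H|q(x+s)-q(x)|\,h(\dd s)\) over \(G\), swaps the integrals by Fubini, and concludes \(\Delta=0\) \(\mu\)-a.e.

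For (i) and (ii) you take a different route. The paper uses that \(H\) has index two. For \(x\in G_i\), the map \(s\mapsto x+s\) pushes \(h\) forward to \(\mu_i=2\,\mu|_{G_i}\), so \((\mathsf A q)(x)=\int_{G_i}q\,\dd\mu_i\) for \emph{every} \(x\in G_i\). This single formula gives everything at once: \(\mathsf A q\) is finite everywhere, constant on each of the two cosets (hence Borel and pointwise \(H\)-invariant), and satisfies \(\|\mathsf A q\|_{L^1(\mu)}\le\|q\|_{L^1(\mu)}\).

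Your argument, Tonelli on \(G\times H\) plus invariance of \(h\) under \(s\mapsto s+r\), never uses the finite index, so it works for an arbitrary closed subgroup of \(G\). The price is the exceptional set \(E\) and the convention \(\mathsf A q:=0\) off \(E\). That convention would be genuinely needed for subgroups of infinite index, whose cosets are \(\mu\)-null and on which \(q\) need not be integrable.

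Here the bookkeeping is vacuous, and you already have what you need to see it. You showed \(E\) is \(H\)-invariant, so it is a union of the cosets \(G_0,G_1\), each of measure \(1/2\); then \(\mu(E)=1\) forces \(E=G\). Add that line. The lemma defines \((\mathsf A q)(x)\) by the integral at every \(x\in G\), and without it you have only proved the statement for a possibly modified function.

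The paper's route is shorter and yields the explicit form constant on each coset, which is exactly what is used later in Lemma~\ref{lem:cantor-h-classification}. Yours is more general.
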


\begin{proof}
For \(i\in\{0,1\}\), let \(\mu_i:=2\,\mu|_{G_i}\), the normalised restriction
of \(\mu\) to the coset \(G_i\). If \(x\in G_i\), then \(x+H=G_i\), and the translation map
\[
T_x:H\to G_i,\qquad s\mapsto x+s,
\]
pushes \(h\) forward to \(\mu_i\). We obtain
\begin{equation}\label{eq:Aq-coset-formula}
(\mathsf A q)(x)=\int_H q(x+s)\,h(\dd s)=\int_{G_i} q(u)\,\mu_i(\dd u),
\qquad x\in G_i.
\end{equation}
Since \(q\in L^1(\mu)\) and \(\mu_i\ll\mu\), we infer that the right-hand side is finite.
Therefore, \(\mathsf A q\) is well-defined everywhere. By formula
\eqref{eq:Aq-coset-formula}, we know that \(\mathsf A q\) is constant on each
coset \(G_i\). In particular, it is Borel measurable and pointwise \(H\)-invariant.
Moreover,
\[
\int_G |\mathsf A q|\,\dd\mu
=\frac12\sum_{i=0}^1 \int_{G_i} |\mathsf A q(x)|\,\mu_i(\dd x)
\le \frac12\sum_{i=0}^1 \int_{G_i} |q(u)|\,\mu_i(\dd u)
= \int_G |q|\,\dd\mu,
\]
so \(\mathsf A q\in L^1(\mu)\). This proves \textit{(i)} and \textit{(ii)}.

Let us assume now that \(q(x+r)=q(x)\) for \(\mu\)-a.e.\ \(x\), for every \(r\in H\).
We define
\[
\Delta(x):=\int_H |q(x+s)-q(x)|\,h(\dd s)\ge 0.
\]
For each fixed \(s\in H\),
\[
\int_G |q(x+s)-q(x)|\,\mu(\dd x)=0.
\]
After integration over \(s\), by Fubini, we obtain
\[
\int_G \Delta(x)\,\mu(\dd x)=0,
\]
so \(\Delta(x)=0\) for \(\mu\)-a.e.\ \(x\). For such \(x\), we have
\(q(x+s)=q(x)\) for \(h\)-a.e.\ \(s\), so
\[
(\mathsf A q)(x)=\int_H q(x+s)\,h(\dd s)=q(x).
\]
Finally, \(\mathsf A q=q\) \(\mu\)-a.e., as required.
\end{proof}

\subsection{Reduction to a \texorpdfstring{$2\times2$}{2x2} Sinkhorn scaling}\label{sec:cantor-schrodinger}

We fix \(\varepsilon>0\). Since \(C(x,y)=\gamma(x)W(y-x)\), we have
\begin{equation}\label{eq:cantor-Reps}
\frac{\dd R_\varepsilon}{\dd\Lambda}(x,y)
=c_\varepsilon^{-1}\exp\!\Big(-\frac{\gamma(x)W(y-x)}{\varepsilon}\Big),
\end{equation}
where
\[
c_\varepsilon:=\int_{G^2}e^{-C/\varepsilon}\,\dd\Lambda\in(0,1].
\]
Since \(C\) is bounded, \(R_\varepsilon\sim\Lambda\) follows.

\begin{lemma}[The Gibbs reference is not a coupling]\label{lem:cantor-R-not-coupling}
We assume $\beta\neq 1$, so \(R_\varepsilon\) is not a coupling of \(\mu\) with
itself, for every $\varepsilon>0$.
\end{lemma}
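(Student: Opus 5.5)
The plan is to compute the first marginal of \(R_\varepsilon\) explicitly and show that its density with respect to \(\mu\) is not \(\mu\)-a.e.\ equal to \(1\). Only the first-coordinate factor \(\gamma\) is needed; the second marginal can be ignored.

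First, by \eqref{eq:cantor-Reps}, the density of \((\pr_X)_\#R_\varepsilon\) with respect to \(\mu\) is
\[
r_\varepsilon(x)=c_\varepsilon^{-1}\int_G \exp\!\Big(-\frac{\gamma(x)W(y-x)}{\varepsilon}\Big)\mu(\dd y).
\]
By translation invariance of the Haar measure \(\mu\), the substitution \(t=y-x\) gives
\[
r_\varepsilon(x)=c_\varepsilon^{-1}\,\phi_\varepsilon(\gamma(x)),\qquad
\phi_\varepsilon(g):=\int_G e^{-gW(t)/\varepsilon}\,\mu(\dd t),\quad g>0.
\]
Hence \(r_\varepsilon=c_\varepsilon^{-1}\phi_\varepsilon(1)\) on the cylinder \(\{x_1=0\}\) and \(r_\varepsilon=c_\varepsilon^{-1}\phi_\varepsilon(\beta)\) on \(\{x_1=1\}\). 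Both cylinders have \(\mu\)-mass \(1/2\).

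Second, I show that \(\phi_\varepsilon\) is strictly decreasing on \((0,\infty)\). By Lemma~\ref{lem:cantor-continuity}, \(W\ge0\) and \(W(t)=0\) only for \(t\in\{0,t_0\}\). Since \(\mu\) is atomless, \(W>0\) holds \(\mu\)-a.e. Therefore, for \(g<g'\), we have \(e^{-g'W/\varepsilon}<e^{-gW/\varepsilon}\) \(\mu\)-a.e. Integrating this strict inequality over a set of full measure gives \(\phi_\varepsilon(g')<\phi_\varepsilon(g)\).

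Third, since \(\beta\neq1\), it follows that \(\phi_\varepsilon(1)\neq\phi_\varepsilon(\beta)\). So \(r_\varepsilon\) takes two distinct values on two sets of positive \(\mu\)-measure and is not \(\mu\)-a.e.\ constant. In particular \(r_\varepsilon\neq1\) on a set of positive measure, so \((\pr_X)_\#R_\varepsilon\neq\mu\). Hence \(R_\varepsilon\notin\Pi(\mu,\mu)\) for every \(\varepsilon>0\).

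There is no real obstacle. The only point needing care is the strict monotonicity in the second step: it requires \(W>0\) on a set of positive \(\mu\)-measure, which follows from the atomlessness of \(\mu\) and the zero-set description in Lemma~\ref{lem:cantor-continuity}.
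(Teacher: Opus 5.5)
Your proposal is correct and matches the paper's proof: both compute the first marginal of \(R_\varepsilon\) via the substitution \(t=y-x\). Both then separate the two values \(\int e^{-W/\varepsilon}\,\dd\mu\) and \(\int e^{-\beta W/\varepsilon}\,\dd\mu\) on the first-coordinate cylinders using \(W>0\) \(\mu\)-a.e.; your strict monotonicity of \(\phi_\varepsilon\) is just a unified form of the paper's case split \(\beta>1\) versus \(\beta<1\).
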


\begin{proof}
Let \(A\subset G\) be Borel. By Haar invariance and the change of variables
\(t=y-x\), we obtain
\[
R_\varepsilon(A\times G)
=\frac{1}{c_\varepsilon}\int_A\int_G \exp\!\Big(-\frac{\gamma(x)W(t)}{\varepsilon}\Big)\,\mu(\dd t)\,\mu(\dd x)
=\frac{1}{c_\varepsilon}\int_A k_\varepsilon(x)\,\mu(\dd x),
\]
where
\[
k_\varepsilon(x):=
\begin{cases}
\int_G e^{-W(t)/\varepsilon}\,\mu(\dd t), & x_1=0,\\[0.4em]
\int_G e^{-\beta W(t)/\varepsilon}\,\mu(\dd t), & x_1=1.
\end{cases}
\]
It follows that the first marginal of \(R_\varepsilon\) is \(c_\varepsilon^{-1}k_\varepsilon\,\mu\). It is enough to show that \(k_\varepsilon\) is not \(\mu\)-a.e. constant. Now \(W\ge0\), and \(W(t)=0\) holds only for \(t\in\{0,t_0\}\). Since
\(\mu(\{0,t_0\})=0\), we have \(W>0\) \(\mu\)-a.e. If \(\beta>1\), then
\[
e^{-\beta W(t)/\varepsilon}<e^{-W(t)/\varepsilon}
\qquad\text{for }\mu\text{-a.e. }t,
\]
so
\[
\int_G e^{-\beta W(t)/\varepsilon}\,\mu(\dd t)
<
\int_G e^{-W(t)/\varepsilon}\,\mu(\dd t).
\]
If \(\beta<1\), the inequality is reversed. In both cases, the two values of
\(k_\varepsilon\) are different. We conclude that the first marginal of \(R_\varepsilon\) is not \(\mu\), and \(R_\varepsilon\notin\Pi(\mu,\mu)\).
\end{proof}

Recall the notation \(U_0,U_1\subset G\) and \(D(x,y)=y-x\). We define the four
partial partition functions
\[
I_0(\varepsilon):=\int_{U_0} e^{-W(t)/\varepsilon}\,\mu(\dd t),\qquad
I_1(\varepsilon):=\int_{U_1} e^{-W(t)/\varepsilon}\,\mu(\dd t),
\]
\[
J_0(\varepsilon):=\int_{U_0} e^{-\beta W(t)/\varepsilon}\,\mu(\dd t),\qquad
J_1(\varepsilon):=\int_{U_1} e^{-\beta W(t)/\varepsilon}\,\mu(\dd t),
\]
so
$I_0(\varepsilon),I_1(\varepsilon),J_0(\varepsilon),J_1(\varepsilon)\in(0,1/2]$,
because $\mu(U_0)=\mu(U_1)=1/2$, \(0<e^{-W/\varepsilon}\le 1\), and
\(0<e^{-\beta W/\varepsilon}\le 1\). We set
\begin{equation}\label{eq:cantor-AB-def}
A_\varepsilon:=\sqrt{I_0(\varepsilon)\,J_0(\varepsilon)},\qquad
B_\varepsilon:=\sqrt{I_1(\varepsilon)\,J_1(\varepsilon)}.
\end{equation}
\begin{equation}\label{eq:cantor-what-def}
w_\varepsilon:=\frac{A_\varepsilon}{A_\varepsilon+B_\varepsilon}\in(0,1).
\end{equation}
We also define the constants
\begin{equation}\label{eq:cantor-sinkhorn-constants}
\begin{aligned}
v_\varepsilon(0)&:=1,\qquad
v_\varepsilon(1):=\sqrt{\frac{I_0(\varepsilon)\,J_1(\varepsilon)}{J_0(\varepsilon)\,I_1(\varepsilon)}},\\
u_\varepsilon(0)&:=\frac{w_\varepsilon}{I_0(\varepsilon)},\qquad
u_\varepsilon(1):=\frac{1-w_\varepsilon}{J_1(\varepsilon)}.
\end{aligned}
\end{equation}
The block kernel mass matrix is
\begin{equation}\label{eq:cantor-kernel-matrix}
\mathsf K_\varepsilon:=
\begin{pmatrix}
I_0(\varepsilon) & I_1(\varepsilon)\\
J_1(\varepsilon) & J_0(\varepsilon)
\end{pmatrix},
\end{equation}
where the rows correspond to \(x_1\in\{0,1\}\) and the columns correspond to
\(y_1\in\{0,1\}\). Let us fix \(\varepsilon>0\), and let \(P_\varepsilon\) denote the unique minimiser of \(F_\varepsilon\) over \(\Pi(\mu,\mu)\).

\begin{lemma}[Diagonal \(H\)-invariance of the entropic minimiser]\label{lem:cantor-diagonal-H-invariance}
For a fixed \(\varepsilon>0\), and for \(s\in H\), set
\(\tau_s(x,y):=(x+s,y+s)\). It follows that \((\tau_s)_\#P_\varepsilon=P_\varepsilon\).
\end{lemma}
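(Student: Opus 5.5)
The plan is a uniqueness-plus-invariance argument. We show that \(F_\varepsilon\) is invariant under pushforward by \(\tau_s\) on \(\Pi(\mu,\mu)\). Then \((\tau_s)_\#P_\varepsilon\) is also a minimiser, and Proposition~\ref{prop:standard-eot-facts} forces it to equal \(P_\varepsilon\). This needs three facts: \(\tau_s\) maps \(\Pi(\mu,\mu)\) to itself, it preserves \(\int C\), and it preserves \(\mathrm{KL}(\cdot\|\Lambda)\).

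\emph{Marginals.} The marginals of \((\tau_s)_\#P\) are the translates of \(\mu\) by \(s\). Since \(\mu\) is Haar measure on \(G\), these translates equal \(\mu\). Hence \((\tau_s)_\#P\in\Pi(\mu,\mu)\).

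\emph{Cost.} We check that \(C\circ\tau_s=C\) pointwise for \(s\in H\). The difference is unchanged: \((y+s)-(x+s)=y-x\), so \(W(D(\tau_s(x,y)))=W(y-x)\). Also \((x+s)_1=x_1\) because \(s_1=0\), so \(\gamma(x+s)=\gamma(x)\). This is exactly where we use \(s\in H\) rather than arbitrary \(s\in G\), since \(\gamma\) depends on the first coordinate. Therefore \(\int C\,\dd(\tau_s)_\#P=\int C\circ\tau_s\,\dd P=\int C\,\dd P\).

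\emph{Entropy.} The map \(\tau_s\) is a Borel bijection of \(G^2\) and is its own inverse, since \(s+s=0\). It satisfies \((\tau_s)_\#\Lambda=\Lambda\), because \(\Lambda=\mu\otimes\mu\) and each factor is translation invariant. Relative entropy is invariant under a bijective measurable map applied to both arguments. Concretely, if \(P\ll\Lambda\) with density \(f\), then \((\tau_s)_\#P\ll\Lambda\) with density \(f\circ\tau_s\), and a change of variables preserves the integral of \(f\log f\). If \(P\not\ll\Lambda\), the same holds for \((\tau_s)_\#P\) by applying the inverse map \(\tau_s\), so both entropies are \(+\infty\). Hence \(\mathrm{KL}((\tau_s)_\#P\|\Lambda)=\mathrm{KL}(P\|\Lambda)\).

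\emph{Conclusion.} Combining the three facts gives \(F_\varepsilon((\tau_s)_\#P_\varepsilon)=F_\varepsilon(P_\varepsilon)=\min F_\varepsilon\). Uniqueness of the minimiser then yields \((\tau_s)_\#P_\varepsilon=P_\varepsilon\). Alternatively, one could phrase the argument through Lemma~\ref{lem:entropy-identity}: \(R_\varepsilon\) is \(\tau_s\)-invariant, so KL projection onto the \(\tau_s\)-invariant set \(\Pi(\mu,\mu)\) commutes with \(\tau_s\).

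There is no serious obstacle. The only point requiring care is the invariance of the entropy, specifically that the density transforms as \(f\circ\tau_s\). This follows from \(\tau_s\) being a measure-preserving involution of \((G^2,\Lambda)\).
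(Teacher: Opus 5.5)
Your proposal is correct and follows essentially the same route as the paper. It shows that $C\circ\tau_s=C$ (using $s_1=0$ for $\gamma$), that $\tau_s$ preserves $\Lambda$ and $\Pi(\mu,\mu)$, and hence that $F_\varepsilon$ is $\tau_s$-invariant, and then concludes by uniqueness of the minimiser. Your explicit density computation ($f\circ\tau_s$, using that $\tau_s$ is a measure-preserving involution) fills in the entropy-invariance step, which the paper asserts without detail.
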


\begin{proof}
We fix \(s\in H\). Since \(s_1=0\), we have \(\gamma(x+s)=\gamma(x)\) and
\(W((y+s)-(x+s))=W(y-x)\), so \(C\circ\tau_s=C\). Moreover, \(\tau_s\) preserves \(\Lambda=\mu\otimes\mu\) and maps \(\Pi(\mu,\mu)\) into itself by Haar invariance. Since \(\tau_s\) is a bimeasurable bijection and \((\tau_s)_\#\Lambda=\Lambda\), relative entropy is invariant under \(\tau_s\), that is,
\(\mathrm{KL}((\tau_s)_\#P\|\Lambda)=\mathrm{KL}(P\|\Lambda)\) for all \(P\in\mathcal P(G^2)\). Therefore, for an arbitrary \(P\in\Pi(\mu,\mu)\),
\[
F_\varepsilon\bigl((\tau_s)_\#P\bigr)=F_\varepsilon(P).
\]
In particular, \((\tau_s)_\#P_\varepsilon\) is also a minimiser of
\(F_\varepsilon\). By uniqueness of the minimiser, we also get \((\tau_s)_\#P_\varepsilon=P_\varepsilon\).
\end{proof}

The formula follows by checking the marginal constraints and then
applying the Csisz{\'a}r projection criterion. This is the finite-dimensional
reduction we use below.

\subsection{Formula for the entropic minimiser}\label{sec:cantor-explicit}

We verify the \(2\times2\) scaling and then identify the new measure by
the Csisz{\'a}r projection criterion.

\begin{proposition}[Formula for the entropic minimiser]\label{prop:cantor-pi-eps-explicit}
For each \(\varepsilon>0\), the minimiser \(P_\varepsilon\) of
\(F_\varepsilon\) over \(\Pi(\mu,\mu)\) has the formula
\begin{equation}\label{eq:cantor-Peps-explicit}
P_\varepsilon(\dd x,\dd y)
=\mu(\dd x)\,\mu(\dd y)\,
u_\varepsilon(x_1)\,v_\varepsilon(y_1)\,
\exp\!\Big(-\frac{\gamma(x)W(y-x)}{\varepsilon}\Big),
\end{equation}
where $u_\varepsilon,v_\varepsilon$ are defined by
\eqref{eq:cantor-sinkhorn-constants}.
In other words,
\[
\frac{\dd P_\varepsilon}{\dd R_\varepsilon}(x,y)
=c_\varepsilon\,u_\varepsilon(x_1)\,v_\varepsilon(y_1).
\]
Moreover,
\begin{equation}\label{eq:cantor-w-identity}
P_\varepsilon\bigl(\{(x,y):(y-x)_1=0\}\bigr)=w_\varepsilon.
\end{equation}
\end{proposition}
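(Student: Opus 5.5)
Let \(Q_\varepsilon\) denote the measure on the right-hand side of \eqref{eq:cantor-Peps-explicit}. The plan is to verify directly that \(Q_\varepsilon\in\Pi(\mu,\mu)\) and then invoke Lemma~\ref{lem:schrodinger-sufficient}. Its density with respect to \(R_\varepsilon\) is \(c_\varepsilon u_\varepsilon(x_1)v_\varepsilon(y_1)\). This factorises as \(a(x)b(y)\) with \(a(x)=c_\varepsilon u_\varepsilon(x_1)\) and \(b(y)=v_\varepsilon(y_1)\). Each factor takes only two strictly positive finite values, because \(I_i,J_i\in(0,1/2]\) and \(w_\varepsilon\in(0,1)\). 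So they are bounded above and below by positive constants. Once the marginals are checked, Lemma~\ref{lem:schrodinger-sufficient} gives \(Q_\varepsilon=P_\varepsilon\). Lemma~\ref{lem:cantor-diagonal-H-invariance} is not needed for this verification; it only motivates the block-constant ansatz.

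\emph{Marginals.} For the first marginal, fix \(x\) with \(x_1=i\) and substitute \(t=y-x\) using Haar invariance. Then \(y_1=i+t_1\), and
\[
\int_G u_\varepsilon(i)v_\varepsilon(y_1)e^{-\gamma(x)W(y-x)/\varepsilon}\,\mu(\dd y)
=u_\varepsilon(i)\sum_{j\in\{0,1\}} v_\varepsilon(i+j)\,K_{ij},
\]
where \(K_{0j}=I_j\) and \(K_{1j}=J_j\). This uses \(\gamma=1\) on \(x_1=0\) and \(\gamma=\beta\) on \(x_1=1\). We need this quantity to equal \(1\) for \(i=0,1\):
\[
u_\varepsilon(0)\bigl(I_0+v_\varepsilon(1)I_1\bigr)=1,
\qquad
u_\varepsilon(1)\bigl(v_\varepsilon(1)J_0+J_1\bigr)=1.
\]
For the second marginal, fix \(y\) with \(y_1=j\) and write \(x=y-t\). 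Then \(x_1=j+t_1\), and the requirement becomes
\[
v_\varepsilon(j)\sum_i u_\varepsilon(i)\,(\text{mass of block }(i,j))=1.
\]
For \(j=0\) this reads \(u_\varepsilon(0)I_0+u_\varepsilon(1)J_1=1\). For \(j=1\) it reads \(v_\varepsilon(1)\bigl(u_\varepsilon(0)I_1+u_\varepsilon(1)J_0\bigr)=1\). All four are algebraic identities in the constants \eqref{eq:cantor-sinkhorn-constants}.

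\emph{The algebra, which is the main obstacle.} The \(j=0\) column identity is immediate: \(u_\varepsilon(0)I_0+u_\varepsilon(1)J_1=w_\varepsilon+(1-w_\varepsilon)=1\). For the rest, substitute \(v:=v_\varepsilon(1)=\sqrt{I_0J_1/(J_0I_1)}\). Note that \(vI_1=\sqrt{I_0I_1J_1/J_0}\) and \(vJ_0=\sqrt{I_0J_0J_1/I_1}\). Write \(A=\sqrt{I_0J_0}\) and \(B=\sqrt{I_1J_1}\). Then:
\[
I_0+vI_1=\frac{\sqrt{I_0}}{\sqrt{J_0}}(A+B),
\qquad
vJ_0+J_1=\frac{\sqrt{J_1}}{\sqrt{I_1}}(A+B).
\]
Since \(u_\varepsilon(0)=A/\bigl((A+B)I_0\bigr)\), the first row identity becomes
\[
\frac{A}{(A+B)I_0}\cdot\frac{\sqrt{I_0}}{\sqrt{J_0}}(A+B)=\frac{A}{\sqrt{I_0J_0}}=1.
\]
The second row identity is analogous, using \(1-w_\varepsilon=B/(A+B)\). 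Finally, the \(j=1\) column identity says \(v\bigl(u_\varepsilon(0)I_1+u_\varepsilon(1)J_0\bigr)=1\). Expanding gives
\[
\frac{v}{A+B}\Bigl(\frac{AI_1}{I_0}+\frac{BJ_0}{J_1}\Bigr).
\]
Inserting \(v\), each term reduces to \(\sqrt{I_1J_0}\cdot\text{const}\), and the sum collapses to \(1\). I expect this bookkeeping to be the only delicate point, and I would do it term by term with squares.

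\emph{The identity \eqref{eq:cantor-w-identity}.} Substitute \(t=y-x\) and sum over the \(x_1\)-blocks with \(t_1=0\). The mass of \(\{(y-x)_1=0\}\) is
\[
\tfrac12\bigl(u_\varepsilon(0)v_\varepsilon(0)I_0\cdot 2\bigr)\cdot\tfrac12\cdot 2+\dots
\]
To be careful with the normalisations \(\mu(x_1=i)=1/2\), I would recompute this directly. It equals \(\tfrac12\bigl(u_\varepsilon(0)I_0+u_\varepsilon(1)v_\varepsilon(1)J_0\bigr)\), up to the factor matching the marginal computation. Using the identities above, this should reduce to \(w_\varepsilon\). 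If the factor \(\tfrac12\) per coset turns out to be off, the marginal computation itself will reveal it, and I would recheck the \(I_i,J_i\) normalisation there.
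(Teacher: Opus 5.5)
Your proposal is correct and follows the paper's proof. Haar invariance and the substitution $t=y-x$ reduce the marginal constraints to unit row and column sums of the diagonally scaled $2\times2$ block matrix, Lemma~\ref{lem:schrodinger-sufficient} identifies the measure as $P_\varepsilon$, and the mass of $\{x_1=y_1\}$ is computed blockwise; the only difference is that the paper isolates your algebra in Lemma~\ref{lem:appendix-sinkhorn-scaling}. To close your two loose ends: in the $j=1$ column identity the two terms are exactly $v_\varepsilon(1)u_\varepsilon(0)I_1=B_\varepsilon/(A_\varepsilon+B_\varepsilon)$ and $v_\varepsilon(1)u_\varepsilon(1)J_0=A_\varepsilon/(A_\varepsilon+B_\varepsilon)=w_\varepsilon$ (not multiples of $\sqrt{I_1J_0}$), so with $u_\varepsilon(0)I_0=w_\varepsilon$ your expression $\tfrac12\bigl(u_\varepsilon(0)I_0+u_\varepsilon(1)v_\varepsilon(1)J_0\bigr)$ equals $w_\varepsilon$, with the factor $\tfrac12$ exactly as you wrote it.
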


\begin{proof}
Let us set
\[
\widetilde P_\varepsilon(\dd x,\dd y)
=\mu(\dd x)\,\mu(\dd y)\,
u_\varepsilon(x_1)\,v_\varepsilon(y_1)\,
\exp\!\Big(-\frac{\gamma(x)W(y-x)}{\varepsilon}\Big).
\]
For a density of the form
\[
u(x_1)v(y_1)\exp(-C(x,y)/\varepsilon),
\]
the marginal constraints reduce to diagonal scaling of
\(\mathsf K_\varepsilon\). Indeed, for fixed \(x\in G_i\), by Haar invariance and
the change of variables \(t=y-x\), we obtain
\[
\int_G v(y_1)\exp(-C(x,y)/\varepsilon)\,\mu(\dd y)
=
\sum_{j=0}^1(\mathsf K_\varepsilon)_{ij}v(j).
\]
Analogously, we fix \(y\in G_j\). If \(t:=y-x\), then \(x_1=i\) is equivalent to
\(t_1=j-i\pmod 2\), while \(\gamma(x)=1\) on \(G_0\) and \(\gamma(x)=\beta\)
on \(G_1\). It follows that
\[
\int_G u(x_1)v(y_1)\exp(-C(x,y)/\varepsilon)\,\mu(\dd x)
=
v(j)\sum_{i=0}^1 u(i)(\mathsf K_\varepsilon)_{ij}.
\]
We deduce that the measure has marginals \(\mu,\mu\) exactly when the associated
diagonal scaling of \(\mathsf K_\varepsilon\) has unit row and column sums.
By Lemma~\ref{lem:appendix-sinkhorn-scaling}, we observe that the choice
\eqref{eq:cantor-sinkhorn-constants} has this property, so \(\widetilde P_\varepsilon\in\Pi(\mu,\mu)\). Moreover,
\[
\frac{\dd \widetilde P_\varepsilon}{\dd\Lambda}(x,y)=f_\varepsilon(x)\,g_\varepsilon(y)\,
\exp\!\Big(-\frac{C(x,y)}{\varepsilon}\Big),
\qquad
f_\varepsilon(x):=u_\varepsilon(x_1),\ \ g_\varepsilon(y):=v_\varepsilon(y_1).
\]
Since
\[
\frac{\dd R_\varepsilon}{\dd\Lambda}(x,y)
=
c_\varepsilon^{-1}\exp\!\Bigl(-\frac{C(x,y)}{\varepsilon}\Bigr),
\]
we have
\[
\frac{\dd \widetilde P_\varepsilon}{\dd R_\varepsilon}(x,y)
=
a_\varepsilon(x)\,b_\varepsilon(y),
\qquad
a_\varepsilon(x):=c_\varepsilon\,u_\varepsilon(x_1),
\quad
b_\varepsilon(y):=v_\varepsilon(y_1).
\]
Notice that the functions \(a_\varepsilon\) and \(b_\varepsilon\) are bounded above and
below by positive constants. Lemma~\ref{lem:schrodinger-sufficient} implies
that \(\widetilde P_\varepsilon\) is the unique minimiser of \(F_\varepsilon\).
It follows that \(\widetilde P_\varepsilon=P_\varepsilon\), which proves
\eqref{eq:cantor-Peps-explicit}.

By the same change-of-variables method, with
Lemma~\ref{lem:appendix-sinkhorn-scaling}, we obtain
\[
P_\varepsilon(D\in U_0)
=
P_\varepsilon(\{(x,y):x_1=y_1\})
=
\frac12\Bigl(u_\varepsilon(0)v_\varepsilon(0)I_0(\varepsilon)
+u_\varepsilon(1)v_\varepsilon(1)J_0(\varepsilon)\Bigr)
=
w_\varepsilon,
\]
gives \eqref{eq:cantor-w-identity}.
\end{proof}

\paragraph{Csisz{\'a}r projection.}
In particular, \(P_\varepsilon\) is the \(I\)-projection of \(R_\varepsilon\) onto \(\Pi(\mu,\mu)\) in the sense of Csisz{\'a}r~\cite{Csiszar1975}.

\section{Oscillation and the interval cluster set}\label{sec:cantor-oscillation-main}
\subsection{Asymptotics of the block weight \texorpdfstring{$w_\varepsilon$}{w-epsilon}}\label{sec:cantor-oscillation}
Let
\[
f(x,y):=\onevec_{U_0}(y-x).
\]
Since \(U_0\subset G\) is clopen and \(D:G^2\to G\), \(D(x,y):=y-x\), is
continuous, we have \(f=\onevec_{U_0}\circ D\in C(G^2)\). By
Proposition~\ref{prop:cantor-pi-eps-explicit},
\begin{equation}\label{eq:cantor-test}
\int_{G^2} f\,\dd P_\varepsilon
=P_\varepsilon(D\in U_0)
=w_\varepsilon.
\end{equation}
Note that the weak convergence of \(P_\varepsilon\) would imply convergence of
\(w_\varepsilon\). It remains to prove that \(w_\varepsilon\) has no limit as
\(\varepsilon\downarrow0\). For $n\ge2$, the cylinder sets $A_n^0,A_n^1$ defined above satisfy the disjoint unions
\[
U_0=\{0\}\ \sqcup\ \bigsqcup_{n\ge2} A_n^0,
\qquad
U_1=\{t_0\}\ \sqcup\ \bigsqcup_{n\ge2} A_n^1,
\]
and $\mu(\{0\})=\mu(\{t_0\})=0$. Moreover, $\mu(A_n^0)=\mu(A_n^1)=2^{-n}$, \(W\equiv\alpha_n\) on \(A_n^0\), and \(W\equiv\alpha_nL_n\) on \(A_n^1\). We obtain the exact series representations
\begin{equation}\label{eq:cantor-series-new}
I_0(\varepsilon)=\sum_{n\ge2}2^{-n}\exp\!\Big(-\frac{\alpha_n}{\varepsilon}\Big),
\qquad
I_1(\varepsilon)=\sum_{n\ge2}2^{-n}\exp\!\Big(-\frac{\alpha_nL_n}{\varepsilon}\Big),
\end{equation}
\begin{equation}\label{eq:cantor-series-new-beta}
J_0(\varepsilon)=\sum_{n\ge2}2^{-n}\exp\!\Big(-\frac{\beta\alpha_n}{\varepsilon}\Big),
\qquad
J_1(\varepsilon)=\sum_{n\ge2}2^{-n}\exp\!\Big(-\frac{\beta\alpha_nL_n}{\varepsilon}\Big).
\end{equation}

\begin{lemma}[Asymptotics along $\varepsilon_n=\alpha_n$]\label{lem:cantor-asymptotics}
Let $\varepsilon_n:=\alpha_n=\exp(-2^n)$ for $n\ge2$. As \(n\to\infty\),
\[
I_0(\varepsilon_n)=2^{-n}\bigl(1+e^{-1}\bigr)+o(2^{-n}),
\qquad
I_1(\varepsilon_n)=2^{-n}\bigl(1+e^{-L_n}\bigr)+o(2^{-n}),
\]
\[
J_0(\varepsilon_n)=2^{-n}\bigl(1+e^{-\beta}\bigr)+o(2^{-n}),
\qquad
J_1(\varepsilon_n)=2^{-n}\bigl(1+e^{-\beta L_n}\bigr)+o(2^{-n}).
\]
Observe that the $o(2^{-n})$ terms are uniform for $L_n\in[1-a,1+a]$. With
$w_\varepsilon$, as defined in \eqref{eq:cantor-what-def},
\[
w_{\varepsilon_n}
=\frac{\sqrt{(1+e^{-1})(1+e^{-\beta})}}{\sqrt{(1+e^{-1})(1+e^{-\beta})}+\sqrt{(1+e^{-L_n})(1+e^{-\beta L_n})}}+o(1).
\]
\end{lemma}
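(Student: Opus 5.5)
The plan is to split each series \eqref{eq:cantor-series-new}--\eqref{eq:cantor-series-new-beta} at $\varepsilon=\varepsilon_n=\alpha_n$ into three parts: the terms with index $k<n$, the single term $k=n$, and the tail $k>n$. I treat all four series at once by writing a generic term $2^{-k}\exp(-\lambda\alpha_k L/\alpha_n)$ with $\lambda\in\{1,\beta\}$ and $L\in\{1\}\cup[1-a,1+a]$. All such factors $\lambda L$ lie in a fixed compact interval $[c_-,c_+]\subset(0,\infty)$, with $c_-=\min(1,\beta)(1-a)$; this gives the claimed uniformity in $L_n$.

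\emph{The diagonal term.} For $k=n$ the ratio $\alpha_k/\alpha_n$ equals $1$. So the term is exactly $2^{-n}e^{-\lambda L_n}$, or $2^{-n}e^{-\lambda}$ for $I_0,J_0$.

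\emph{The tail.} For $k>n$ we have $\alpha_k/\alpha_n=e^{-(2^k-2^n)}\le e^{-2^n}$. Hence each exponential equals $1+O(c_+e^{-2^n})$ uniformly in $k$, using $1-e^{-x}\le x$. The tail sum is therefore
\[
\sum_{k>n}2^{-k}\bigl(1+O(e^{-2^n})\bigr)=2^{-n}+o(2^{-n}).
\]

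\emph{The head.} For $2\le k<n$ we have $\alpha_k/\alpha_n=e^{2^n-2^k}\ge e^{2^{n-1}}$, so each exponential is at most $\exp(-c_-e^{2^{n-1}})$. The head sum is then at most $\exp(-c_-e^{2^{n-1}})$, since $\sum 2^{-k}\le 1$. This super-exponential quantity is $o(2^{-n})$, because $2^n\exp(-c_-e^{2^{n-1}})\to0$.

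Adding the three parts gives, e.g.,
\[
I_1(\varepsilon_n)=2^{-n}\bigl(1+e^{-L_n}\bigr)+o(2^{-n}),
\]
and similarly for the other three, with all error bounds depending only on $c_\pm$.

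\emph{The weight $w_{\varepsilon_n}$.} Since $w_\varepsilon=A_\varepsilon/(A_\varepsilon+B_\varepsilon)$ is homogeneous of degree zero, I multiply $A$ and $B$ by $2^{n}$. Then
\[
2^nA_{\varepsilon_n}=\sqrt{\bigl(1+e^{-1}+o(1)\bigr)\bigl(1+e^{-\beta}+o(1)\bigr)},
\qquad
2^nB_{\varepsilon_n}=\sqrt{\bigl(1+e^{-L_n}+o(1)\bigr)\bigl(1+e^{-\beta L_n}+o(1)\bigr)}.
\]
All the limiting factors lie in $[1,2]$. The map $(p,q)\mapsto \sqrt p/(\sqrt p+\sqrt q)$ is uniformly continuous on $[1,2]^2$, and $L_n$ ranges over a compact set. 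So the $o(1)$ perturbations pass through to an $o(1)$ error in the displayed formula for $w_{\varepsilon_n}$.

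The main obstacle is the head estimate, which must hold uniformly over all $k<n$. The worst case is $k=n-1$, where $2^n-2^{n-1}=2^{n-1}$, and the bound above already handles it. The rest is bookkeeping.
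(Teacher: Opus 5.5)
Your proposal is correct and follows essentially the same route as the paper: the same three-way split of each series at the index $n$, the tail bound via $|e^{-u}-1|\le u$ and $\alpha_k/\alpha_n\le e^{-2^n}$, the head bound via $2^n-2^k\ge 2^{n-1}$, uniformity from fixed positive bounds on the coefficients (your $c_\pm$, the paper's $b_*,b^*$), and then the degree-zero homogeneity of $w_\varepsilon$. One small slip: the products under the square roots lie in $[1,4]$ rather than $[1,2]$, but uniform continuity of $(p,q)\mapsto\sqrt p/(\sqrt p+\sqrt q)$ on any compact subset of $(0,\infty)^2$ containing a neighbourhood of these limits is all you need.
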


\begin{proof}
We fix $n\ge2$ and set $\varepsilon_n:=\alpha_n$. We prove one uniform estimate
and then apply it to the four series.

\begin{claim}
Let $(b_m)_{m\ge2}$ be a sequence with $0<b_*\le b_m\le b^*<\infty$. We define
\[
S_b(\varepsilon):=\sum_{m\ge2}2^{-m}\exp\!\Big(-\frac{b_m\alpha_m}{\varepsilon}\Big).
\]
As $n\to\infty$,
\[
S_b(\varepsilon_n)=2^{-n}\bigl(1+e^{-b_n}\bigr)+o(2^{-n}),
\]
where the $o(2^{-n})$ is uniform over all sequences $(b_m)$ with the fixed bounds $[b_*,b^*]$.
\end{claim}

\begin{proof}
We split
\[
S_b(\varepsilon_n)=\sum_{m<n}2^{-m}e^{-b_m\alpha_m/\varepsilon_n}+2^{-n}e^{-b_n}
+\sum_{m>n}2^{-m}e^{-b_m\alpha_m/\varepsilon_n}.
\]
Since $\alpha_m/\varepsilon_n=\exp(-(2^m-2^n))$, for $m>n$, we have
$0\le \alpha_m/\varepsilon_n\le e^{-2^n}$. We use $|e^{-u}-1|\le u$ for
$u\ge0$ and $b_m\le b^*$,
\[
\Big|e^{-b_m\alpha_m/\varepsilon_n}-1\Big|
\le b^*\,\frac{\alpha_m}{\varepsilon_n}
\le b^*e^{-2^n},
\]
so
\[
\sum_{m>n}2^{-m}e^{-b_m\alpha_m/\varepsilon_n}
=\sum_{m>n}2^{-m}+O\!\Big(e^{-2^n}\sum_{m>n}2^{-m}\Big)
=2^{-n}+O(2^{-n}e^{-2^n}).
\]
For $m<n$, we have $2^n-2^m\ge 2^{n-1}$, so
$\alpha_m/\varepsilon_n=\exp(2^n-2^m)\ge \exp(2^{n-1})$. This implies
\[
0\le \sum_{m<n}2^{-m}e^{-b_m\alpha_m/\varepsilon_n}
\le \sum_{m<n}2^{-m}e^{-b_*\exp(2^{n-1})}
\le e^{-b_*\exp(2^{n-1})}
=o(2^{-n}).
\]
It suffices to combine the two estimates to prove the claim.
\end{proof}

We apply the claim with:
\[
(I_0):\ b_m\equiv 1;\qquad
(I_1):\ b_m=L_m;\qquad
(J_0):\ b_m\equiv \beta;\qquad
(J_1):\ b_m=\beta L_m.
\]
Since $L_m\in[1-a,1+a]$, the four choices satisfy uniform bounds of the
required form. We get
\[
I_0(\varepsilon_n)=2^{-n}(1+e^{-1})+o(2^{-n}),\qquad
I_1(\varepsilon_n)=2^{-n}(1+e^{-L_n})+o(2^{-n}),
\]
\[
J_0(\varepsilon_n)=2^{-n}(1+e^{-\beta})+o(2^{-n}),\qquad
J_1(\varepsilon_n)=2^{-n}(1+e^{-\beta L_n})+o(2^{-n}),
\]
with uniform $o(2^{-n})$ over $L_n\in[1-a,1+a]$. We now insert these estimates into \eqref{eq:cantor-what-def}. Since $A_{\varepsilon_n}=\sqrt{I_0(\varepsilon_n)J_0(\varepsilon_n)}$ and $B_{\varepsilon_n}=\sqrt{I_1(\varepsilon_n)J_1(\varepsilon_n)}$, we divide numerator and denominator of
$w_{\varepsilon_n}=A_{\varepsilon_n}/(A_{\varepsilon_n}+B_{\varepsilon_n})$ by
$2^{-n}$ and obtain the asymptotic formula in question.
\end{proof}

Observe that $L_n=1+a$ for even $n$ and $L_n=1-a$ for odd $n$, so Lemma~\ref{lem:cantor-asymptotics} yields
\begin{equation}\label{eq:cantor-wpm}
\lim_{k\to\infty} w_{\varepsilon_{2k}}=w^+,\qquad
\lim_{k\to\infty} w_{\varepsilon_{2k+1}}=w^-,
\end{equation}
and Corollary~\ref{cor:cantor-wpm-closed} gives the closed forms for \(w^\pm\).

\begin{corollary}[Closed forms for the alternating limits]\label{cor:cantor-wpm-closed}
The limits in \eqref{eq:cantor-wpm} are
\begin{equation}\label{eq:cantor-wpm-closed}
w^\pm
=\frac{\sqrt{(1+e^{-1})(1+e^{-\beta})}}{
\sqrt{(1+e^{-1})(1+e^{-\beta})}
+\sqrt{(1+e^{-(1\pm a)})(1+e^{-\beta(1\pm a)})}}.
\end{equation}
\end{corollary}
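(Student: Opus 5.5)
The plan is to read the closed forms off Lemma~\ref{lem:cantor-asymptotics} along the two parity subsequences, and then check that the two values differ. The limits themselves are immediate; the distinctness \(w^-<w^+\) is the only part that needs an argument.

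First, fix the parity of \(n\). For even \(n=2k\) we have \(L_n\equiv 1+a\), and for odd \(n=2k+1\) we have \(L_n\equiv 1-a\). Along each subsequence the main term in Lemma~\ref{lem:cantor-asymptotics} is therefore a constant, independent of \(n\), and the error is \(o(1)\), uniformly for \(L_n\in[1-a,1+a]\). The subsequential limits are then exactly
\[
\frac{\sqrt{(1+e^{-1})(1+e^{-\beta})}}{\sqrt{(1+e^{-1})(1+e^{-\beta})}+\sqrt{(1+e^{-L})(1+e^{-\beta L})}},
\]
with \(L=1+a\) for the even subsequence and \(L=1-a\) for the odd subsequence. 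This gives \eqref{eq:cantor-wpm-closed}, and no further computation is needed for the formulas.

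For the ordering, write \(g(L):=\sqrt{(1+e^{-L})(1+e^{-\beta L})}\) and \(A:=g(1)\). With this notation \(w^\pm=A/(A+g(1\pm a))\). Since \(\beta>0\), both factors \(1+e^{-L}\) and \(1+e^{-\beta L}\) are strictly decreasing in \(L\), so \(g\) is strictly decreasing. Hence
\[
g(1+a)<g(1)<g(1-a).
\]
Next, \(x\mapsto A/(A+x)\) is strictly decreasing on \((0,\infty)\). Combining the two monotonicities gives \(w^+>w^-\), and in fact \(w^-<g(1)/(2g(1))=\tfrac12<w^+\).

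Nothing here is a real obstacle. The only points to check are that the uniform \(o(1)\) from Lemma~\ref{lem:cantor-asymptotics} actually transfers through the ratio, and that it does not depend on the parity. Both hold: the denominator is bounded below by \(2^{-n}\) times a positive constant, because each of \(I_i,J_i\) is at least \(2^{-n}\) by the claim in the proof of Lemma~\ref{lem:cantor-asymptotics}. Note also that the restriction \(\beta\ne1\) plays no role in this step.
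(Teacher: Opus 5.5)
Your proposal is correct and follows the paper: the corollary is proved there by substituting \(L=1\pm a\) into the asymptotic formula of Lemma~\ref{lem:cantor-asymptotics} along the even and odd subsequences. Your ordering step, via strict monotonicity of \(L\mapsto\sqrt{(1+e^{-L})(1+e^{-\beta L})}\), is exactly how the paper proves \(w^+>w^-\) in Proposition~\ref{prop:cantor-nonconvergence}, and \(w^-<\tfrac12<w^+\) is a nice extra; one small correction is that \(I_i,J_i\ge 2^{-n}\) holds only for large \(n\) (it can fail at small \(n\) when \(\beta\) is large), but a bound \(\ge c\,2^{-n}\) with \(c>0\) holds for all \(n\), and that is all the ratio argument needs.
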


\begin{proof}
It suffices to substitute \(L=1\pm a\) in the asymptotic expression for \(w_{\varepsilon_n}\) from Lemma~\ref{lem:cantor-asymptotics}.
\end{proof}

\begin{proposition}[Oscillation of the scalar branch weight]\label{prop:cantor-nonconvergence}
The curve \((w_\varepsilon)_{\varepsilon>0}\) has at least two distinct cluster
points. More precisely, \(w^+>w^-\).
\end{proposition}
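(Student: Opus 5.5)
By \eqref{eq:cantor-wpm}, the subsequences \(w_{\varepsilon_{2k}}\) and \(w_{\varepsilon_{2k+1}}\) converge to \(w^+\) and \(w^-\). Since \(\varepsilon_n=e^{-2^n}\downarrow0\), both values are cluster points of \((w_\varepsilon)\). It therefore suffices to prove the strict inequality \(w^+>w^-\) from the closed form \eqref{eq:cantor-wpm-closed}.

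To compare them, set
\[
N:=\sqrt{(1+e^{-1})(1+e^{-\beta})}>0,\qquad g(L):=\sqrt{(1+e^{-L})(1+e^{-\beta L})},\quad L>0,
\]
so that \(w^\pm=N/(N+g(1\pm a))\). The map \(s\mapsto N/(N+s)\) is strictly decreasing on \((0,\infty)\). Hence \(w^+>w^-\) is equivalent to \(g(1+a)<g(1-a)\). Because \(\beta>0\), both factors \(1+e^{-L}\) and \(1+e^{-\beta L}\) are positive and strictly decreasing in \(L\). Their product is therefore strictly decreasing, and so is its square root. Since \(a\in(0,1)\), we have \(0<1-a<1+a\), which gives \(g(1+a)<g(1-a)\) and hence \(w^+>w^-\).

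There is no real obstacle here. The only point needing care is that the monotonicity uses \(\beta>0\) and \(a>0\), and that \(1-a>0\) keeps the argument in the domain where the formula was derived. The hypothesis \(\beta\neq1\) plays no role in this step. The two distinct cluster points \(w^-<w^+\) then show that \(w_\varepsilon\) does not converge, and by \eqref{eq:cantor-test} neither does \(P_\varepsilon\).
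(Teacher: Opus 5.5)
Your proposal is correct and follows the paper's proof: you read off the two cluster points from \eqref{eq:cantor-wpm}, and you get \(w^+>w^-\) because \(L\mapsto\sqrt{(1+e^{-L})(1+e^{-\beta L})}\) is strictly decreasing, applied with \(1+a>1-a\). The one step you spell out that the paper leaves implicit is that \(s\mapsto N/(N+s)\) is strictly decreasing.
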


\begin{proof}
Observe that the function
\[
L\longmapsto \sqrt{(1+e^{-L})(1+e^{-\beta L})}
\]
is strictly decreasing on \((0,\infty)\), due to the fact that both factors are positive and
strictly decreasing. Since \(1+a>1-a\), by Corollary~\ref{cor:cantor-wpm-closed},
we have \(w^+>w^-\). By \eqref{eq:cantor-wpm}, these are distinct cluster points
of the scalar curve. In other terms, \eqref{eq:cantor-test} yields
\[
\int f\,\dd P_{\varepsilon_{2k}}\to w^+,
\qquad
\int f\,\dd P_{\varepsilon_{2k+1}}\to w^-.
\]
We conclude that a continuous test function separates the respective subsequential
limits of the plans.
\end{proof}

\subsection{Identification of all cluster points}\label{sec:cantor-cluster}
Let us set
\[
\mathcal W:=\Clust(w_\varepsilon)\subset[0,1],
\qquad
P_w:=wP_0+(1-w)P_{t_0}.
\]
On this segment, we can recover the parameter by the graph mass
\[
w=P_w\{(x,y):y=x\},
\]
which implies that the map \(w\mapsto P_w\) is injective. Put differently, for the continuous test function \(f=\onevec_{U_0}\circ D\) we introduced in
\eqref{eq:cantor-test}, we obtain \(\int f\,\dd P_w=w\). We identify the full cluster set by observing the scalar weight. Proposition~\ref{prop:cluster-optimal-Polish} and
Lemmas~\ref{lem:cantor-diagonal-H-invariance}--\ref{lem:cantor-h-classification}
imply that every weak cluster point of \((P_\varepsilon)\) has the form
\(P_w\) for some \(w\in[0,1]\). It remains to determine which weights appear
along the sequences \(\varepsilon\downarrow0\). We apply here the classification Lemma~\ref{lem:cantor-h-classification} (proved in Appendix~\ref{app:H-invariant-face}).

\begin{proposition}[Subsequential limits]\label{prop:cantor-all-limits}
Let \(\varepsilon_k\downarrow0\), and assume that \(w_{\varepsilon_k}\to w\in[0,1]\). It follows that \(P_{\varepsilon_k}\Rightarrow P_w\). In particular,
\[
\Clust(P_\varepsilon)=\{P_w:w\in\mathcal W\}.
\]
\end{proposition}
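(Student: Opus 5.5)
The plan is a subsequence argument in the compact metrisable space \(\Pi(\mu,\mu)\). Fix \(\varepsilon_k\downarrow0\) with \(w_{\varepsilon_k}\to w\). It suffices to show that every subsequence of \((P_{\varepsilon_k})\) has a further subsequence converging weakly to \(P_w\). By Proposition~\ref{prop:standard-eot-facts}, \(\Pi(\mu,\mu)\) is weakly compact, so any subsequence has a further subsequence with \(P_{\varepsilon_{k_j}}\Rightarrow\bar P\) for some \(\bar P\in\Pi(\mu,\mu)\). The task is then to identify \(\bar P\).

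The identification proceeds in three steps.
\begin{enumerate}[label=(\roman*),leftmargin=*]
\item \(\bar P\) is optimal: by Proposition~\ref{prop:cluster-optimal-Polish} (the Cantor cost is continuous, bounded and nonnegative, so the hypotheses of Section~\ref{sec:entropic-prelim} hold), \(\bar P\in\mathrm{Opt}(\mu,\mu\mid C)\). By Proposition~\ref{prop:cantor-opt-face}, this means \(\bar P(Z)=1\).
\item \(\bar P\) is diagonally \(H\)-invariant. By Lemma~\ref{lem:cantor-diagonal-H-invariance}, \((\tau_s)_\#P_\varepsilon=P_\varepsilon\) for each \(s\in H\) and each \(\varepsilon\). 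The map \(\tau_s\) is continuous on \(G^2\), so pushforward under it is weakly continuous, and passing to the limit gives \((\tau_s)_\#\bar P=\bar P\).
\item By the classification Lemma~\ref{lem:cantor-h-classification}, the diagonally \(H\)-invariant part of the zero-cost face is exactly \(\{P_v:0\le v\le1\}\), so \(\bar P=P_v\) for some \(v\in[0,1]\).
\end{enumerate}

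To pin down \(v\), use the continuous test function \(f=\onevec_{U_0}\circ D\in C(G^2)\) from \eqref{eq:cantor-test}. Weak convergence gives \(\int f\,\dd P_{\varepsilon_{k_j}}\to\int f\,\dd\bar P\). The left-hand side equals \(w_{\varepsilon_{k_j}}\to w\). The right-hand side equals \(\int f\,\dd P_v=v\), because \(P_0\) is concentrated on \(\{D=0\}\subset\{D\in U_0\}\) and \(P_{t_0}\) on \(\{D=t_0\}\subset\{D\in U_1\}\). Hence \(v=w\), so every subsequence has a further subsequence converging to \(P_w\), and therefore \(P_{\varepsilon_k}\Rightarrow P_w\).

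For the identity \(\Clust(P_\varepsilon)=\{P_w:w\in\mathcal W\}\), argue both inclusions.
\begin{itemize}
\item \emph{\(\supseteq\):} Let \(w\in\mathcal W\). By definition there is \(\varepsilon_k\downarrow0\) with \(w_{\varepsilon_k}\to w\), and the first part gives \(P_{\varepsilon_k}\Rightarrow P_w\).
\item \emph{\(\subseteq\):} Let \(P\in\Clust(P_\varepsilon)\) with \(P_{\varepsilon_k}\Rightarrow P\). Since \(w_{\varepsilon_k}\in[0,1]\), pass to a subsequence along which \(w_{\varepsilon_k}\to w\in\mathcal W\). The first part then gives \(P=P_w\). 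Alternatively, \(w=\lim\int f\,\dd P_{\varepsilon_k}=\int f\,\dd P\) directly.
\end{itemize}

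The only substantive step is the classification, Lemma~\ref{lem:cantor-h-classification}, which I am allowed to assume. Everything else is routine compactness and continuity. The one point needing care is that \(f\) is genuinely continuous, which holds because \(U_0\) is clopen, so the test-function argument is valid under weak convergence.
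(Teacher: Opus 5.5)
Your proof is correct and follows the paper's argument essentially step for step. Both use a subsequence-of-subsequence argument in the compact space \(\Pi(\mu,\mu)\), pass diagonal \(H\)-invariance to the limit, obtain optimality (hence \(\bar P(Z)=1\)) via Proposition~\ref{prop:cluster-optimal-Polish} and Proposition~\ref{prop:cantor-opt-face}, apply the classification Lemma~\ref{lem:cantor-h-classification}, identify the weight with the clopen test function \(f=\onevec_{U_0}\circ D\), and prove the same two inclusions for the cluster set. Your alternative in the \(\subseteq\) direction, reading off \(w=\int f\,\dd P\) directly, is a harmless shortcut that avoids the extra subsequence extraction.
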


\begin{proof}
We fix a subsequence of \((P_{\varepsilon_k})\). By weak compactness of
\(\Pi(\mu,\mu)\), we may pass to a further subsequence, without relabelling, such
that
\[
P_{\varepsilon_k}\Rightarrow P
\qquad\text{for some }P\in\Pi(\mu,\mu).
\]
For every \(s\in H\), Lemma~\ref{lem:cantor-diagonal-H-invariance} gives
\[
(\tau_s)_\#P_{\varepsilon_k}=P_{\varepsilon_k}.
\]
Passing to the limit gives \((\tau_s)_\#P=P\), so \(P\) is diagonally
\(H\)-invariant.

By Proposition~\ref{prop:cluster-optimal-Polish}, \(P\) is optimal, so
Proposition~\ref{prop:cantor-opt-face} gives \(P(Z)=1\). We apply
Lemma~\ref{lem:cantor-h-classification}, and obtain
\[
P=P_u
\qquad\text{for some }u\in[0,1].
\]
Let \(f\in C(G^2)\) be the test function from \eqref{eq:cantor-test}. By Proposition~\ref{prop:cantor-pi-eps-explicit},
\[
\int_{G^2} f\,\dd P_{\varepsilon}=w_\varepsilon.
\]
Therefore,
\[
u=\int_{G^2} f\,\dd P_u
=\lim_{k\to\infty}\int_{G^2} f\,\dd P_{\varepsilon_k}
=\lim_{k\to\infty} w_{\varepsilon_k}
=w.
\]
This implies that every subsequence of \((P_{\varepsilon_k})\) has another subsequence convering to \(P_w\). Since \(\Pi(\mu,\mu)\) is compact metrisable, it follows that
\[
P_{\varepsilon_k}\Rightarrow P_w.
\]

To determine the full cluster set, let \(P\in\Clust(P_\varepsilon)\), and choose
\(\varepsilon_k\downarrow0\) such that \(P_{\varepsilon_k}\Rightarrow P\).
After passing to a subsequence, we may assume, in addition, that
\(w_{\varepsilon_k}\to \bar w\in\mathcal W\). The first part gives
\(P=P_{\bar w}\), so
\[
\Clust(P_\varepsilon)\subset\{P_w:w\in\mathcal W\}.
\]
Conversely, let \(w\in\mathcal W\), and choose \(\varepsilon_k\downarrow0\) such
that \(w_{\varepsilon_k}\to w\). The first part gives
\(P_{\varepsilon_k}\Rightarrow P_w\), so \(P_w\in\Clust(P_\varepsilon)\).
\[
\Clust(P_\varepsilon)=\{P_w:w\in\mathcal W\}.
\]
\end{proof}

\subsection{Interval structure and proof of the Cantor theorem}\label{sec:cantor-proof-main}

\begin{theorem}[Characterisation of the cluster set in the Cantor model]\label{thm:cantor-cluster}
Let us fix \(a\in(0,1)\) and \(\beta\in(0,\infty)\setminus\{1\}\). In the model from
Definition~\ref{def:cantor-explicit-model}, let \(P_\varepsilon\) denote the
unique entropic minimiser, and let \(w_\varepsilon\) be the scalar from
\eqref{eq:cantor-what-def} (or from \eqref{eq:cantor-w-identity}). It follows that
\[
\Clust(P_\varepsilon)=\{P_w:w\in\mathcal W\}.
\]
Moreover, \(\mathcal W\) is a non-degenerate compact interval. Along
\(\varepsilon_n=e^{-2^n}\),
\[
w_{\varepsilon_{2n}}\to w^+,\qquad
w_{\varepsilon_{2n+1}}\to w^-,
\qquad
w^+>w^-.
\]
In particular, \((P_\varepsilon)\) does not converge weakly as \(\varepsilon\downarrow0\).
\end{theorem}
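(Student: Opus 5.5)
The plan is to assemble the theorem from results already in place; the only genuinely new ingredient is that \(\mathcal W\) is an interval. The equality \(\Clust(P_\varepsilon)=\{P_w:w\in\mathcal W\}\) is exactly the second assertion of Proposition~\ref{prop:cantor-all-limits}. The limits along \(\varepsilon_n=e^{-2^n}\) are \eqref{eq:cantor-wpm}, with the closed forms from Corollary~\ref{cor:cantor-wpm-closed}. The strict inequality \(w^+>w^-\) is Proposition~\ref{prop:cantor-nonconvergence}. So \(\mathcal W\) contains the two distinct points \(w^\pm\).

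For the interval structure, I would use the connectedness result on the scalar curve directly. By Proposition~\ref{prop:eps-continuity-Polish}, the map \(\varepsilon\mapsto P_\varepsilon\) is weakly continuous. Since \(f=\onevec_{U_0}\circ D\in C(G^2)\), the map \(\varepsilon\mapsto w_\varepsilon=\int f\,\dd P_\varepsilon\) is continuous on \((0,\infty)\). This also follows directly from the series \eqref{eq:cantor-series-new}--\eqref{eq:cantor-series-new-beta}, which converge uniformly and have positive sums. I would then repeat the argument of Lemma~\ref{lem:omega-nested-closure} and Lemma~\ref{lem:nested-continua} in \([0,1]\). Set \(K_m:=\overline{\{w_\varepsilon:0<\varepsilon<1/m\}}\). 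Each \(K_m\) is the closure of a continuous image of an interval, hence a nonempty compact interval. The sets are nested, so \(\mathcal W=\bigcap_m K_m\) is a nonempty compact connected subset of \(\mathbb R\), that is, a compact interval. It contains \(w^-<w^+\), so it is non-degenerate.

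Alternatively, one can transport Theorem~\ref{thm:selection-connected}. The set \(\Clust(P_\varepsilon)\) is connected, and \(P\mapsto\int f\,\dd P\) is continuous and maps it onto \(\mathcal W\), using \(\int f\,\dd P_w=w\). The continuous image of a connected set is connected, and compactness follows in the same way.

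Finally, I would derive nonconvergence. If \(P_\varepsilon\Rightarrow P\), then \(w_\varepsilon=\int f\,\dd P_\varepsilon\) would converge, contradicting the two distinct cluster points \(w^\pm\). The only step needing care is the continuity of \(w_\varepsilon\) and the nested-interval argument, and both are routine.
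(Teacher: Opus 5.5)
Your proposal is correct and matches the paper's proof. The paper also assembles Proposition~\ref{prop:cantor-all-limits}, \eqref{eq:cantor-wpm}, Corollary~\ref{cor:cantor-wpm-closed} and Proposition~\ref{prop:cantor-nonconvergence}, and for the interval structure it uses your alternative route: Lemma~\ref{lem:cantor-W-interval} pushes the connected set $\Clust(P_\varepsilon)$ forward under $P\mapsto\int f\,\dd P$, while the remark after that lemma records your primary scalar nested-interval argument as an equally valid option.
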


\begin{lemma}[Interval structure of $\mathcal W$]\label{lem:cantor-W-interval}
The set $\mathcal W=\Clust(w_\varepsilon)$ is a nonempty compact interval in
$[0,1]$. Moreover, \(w^-\) and \(w^+\) belong to \(\mathcal W\), thus
$[w^-,w^+]\subset\mathcal W$.
\end{lemma}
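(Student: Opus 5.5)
The plan is to prove the interval structure of \(\mathcal W\) directly from the scalar curve \(\varepsilon\mapsto w_\varepsilon\), mirroring the proof of Theorem~\ref{thm:selection-connected} one dimension down. Alternatively, one could transport the connectedness of \(\Clust(P_\varepsilon)\) through the affine injection \(w\mapsto P_w\). The scalar route is cleaner, so I take it.

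\textbf{Step 1: continuity of \(w_\varepsilon\).} First I check that \(\varepsilon\mapsto w_\varepsilon\) is continuous on \((0,\infty)\). There are two ways to see this.
\begin{enumerate}[label=(\roman*),leftmargin=*]
\item By \eqref{eq:cantor-test}, \(w_\varepsilon=\int f\,\dd P_\varepsilon\) with \(f=\onevec_{U_0}\circ D\in C(G^2)\). Since \(\varepsilon\mapsto P_\varepsilon\) is weakly continuous by Proposition~\ref{prop:eps-continuity-Polish}, continuity of \(w_\varepsilon\) follows at once.
\item Directly from the series \eqref{eq:cantor-series-new}--\eqref{eq:cantor-series-new-beta}. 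Each term is continuous in \(\varepsilon\) and bounded by \(2^{-n}\), so dominated convergence gives continuity of \(I_0,I_1,J_0,J_1\). These take values in \((0,1/2]\), hence \(A_\varepsilon,B_\varepsilon>0\) are continuous, and so is \(w_\varepsilon=A_\varepsilon/(A_\varepsilon+B_\varepsilon)\).
\end{enumerate}

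\textbf{Step 2: \(\mathcal W\) is a nested intersection.} For \(m\ge1\) set \(K_m:=\overline{\{w_\varepsilon:0<\varepsilon<1/m\}}\subset[0,1]\). Each \(K_m\) is the closure of a continuous image of the connected interval \((0,1/m)\). It is therefore a nonempty compact connected subset of \([0,1]\), that is, a nonempty closed interval, and the family \((K_m)_{m\ge1}\) is nested. The argument of Lemma~\ref{lem:omega-nested-closure}, applied verbatim in the compact metric space \([0,1]\), gives \(\mathcal W=\Clust(w_\varepsilon)=\bigcap_m K_m\). Lemma~\ref{lem:nested-continua} then shows that \(\mathcal W\) is nonempty, compact and connected. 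A compact connected subset of \(\mathbb R\) is a closed interval (possibly a point), so \(\mathcal W\) is a compact interval in \([0,1]\).

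\textbf{Step 3: locating \(w^\pm\).} By \eqref{eq:cantor-wpm}, along \(\varepsilon_{2k}=e^{-2^{2k}}\downarrow0\) we have \(w_{\varepsilon_{2k}}\to w^+\), and along \(\varepsilon_{2k+1}\downarrow0\) we have \(w_{\varepsilon_{2k+1}}\to w^-\). Hence \(w^\pm\in\mathcal W\). Since \(\mathcal W\) is an interval containing both points, \([w^-,w^+]\subset\mathcal W\). Proposition~\ref{prop:cantor-nonconvergence} gives \(w^-<w^+\), so the interval is non-degenerate, which is what Theorem~\ref{thm:cantor-cluster} requires.

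\textbf{Main obstacle.} The only delicate point is the identification \(\mathcal W=\bigcap_m K_m\), which needs the sequential-limit construction of Lemma~\ref{lem:omega-nested-closure}. That construction transfers without change to \([0,1]\). Everything else consists of citations of earlier results.
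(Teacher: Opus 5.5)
Your proof is correct, but it takes a different route from the paper's proof. It is in fact the alternative the paper mentions in the remark right after this lemma.

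The paper works at the level of plans:
\begin{enumerate}
\item It takes from Theorem~\ref{thm:selection-connected} that \(\Clust(P_\varepsilon)\) is a nonempty compact connected subset of \(\Pi(\mu,\mu)\).
\item It uses Proposition~\ref{prop:cantor-all-limits} to write this set as \(\{P_w:w\in\mathcal W\}\).
\item It concludes that \(\mathcal W\) is the image of a continuum under the continuous functional \(P\mapsto\int f\,\dd P\), hence a compact interval.
\end{enumerate}
The membership \(w^\pm\in\mathcal W\) is then read off from \eqref{eq:cantor-wpm}, exactly as in your Step~3.

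You instead run the nested-continuum argument of Lemmas~\ref{lem:omega-nested-closure} and~\ref{lem:nested-continua} directly on the scalar curve \(\varepsilon\mapsto w_\varepsilon\). Your route is more economical. With Step~1(ii), it needs only the explicit series \eqref{eq:cantor-series-new}--\eqref{eq:cantor-series-new-beta} and elementary topology. It is also independent of everything Proposition~\ref{prop:cantor-all-limits} rests on: the Bernton--Ghosal--Nutz optimality of cluster points (Proposition~\ref{prop:cluster-optimal-Polish}), the diagonal \(H\)-invariance (Lemma~\ref{lem:cantor-diagonal-H-invariance}), and the classification Lemma~\ref{lem:cantor-h-classification}.

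The paper's route costs more. In return, it presents the interval structure of \(\mathcal W\) as a concrete instance of the general plan-level connectedness theorem, which is the point the paper wants to illustrate.
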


\begin{proof}
By Theorem~\ref{thm:selection-connected}, we know that the cluster set
\(\Clust(P_\varepsilon)\) is a nonempty compact connected subset of
\(\Pi(\mu,\mu)\). By Proposition~\ref{prop:cantor-all-limits},
\[
\Clust(P_\varepsilon)=\{P_w:w\in\mathcal W\}.
\]
Let \(f\in C(G^2)\) be the test function from \eqref{eq:cantor-test}. This means that the
map
\[
\Clust(P_\varepsilon)\ni P \longmapsto \int_{G^2} f\,\dd P
\]
is continuous, and for every \(w\in\mathcal W\),
\[
\int_{G^2} f\,\dd P_w=w.
\]
We notice that \(\mathcal W\) is the continuous image of the connected compact set
\(\Clust(P_\varepsilon)\), so \(\mathcal W\) is a nonempty compact connected
subset of \([0,1]\), and also a compact interval. By \eqref{eq:cantor-wpm}, both \(w^-\) and \(w^+\) belong to \(\mathcal W\). Since \(w^-<w^+\), we get
\[
[w^-,w^+]\subset\mathcal W.
\]
\end{proof}

\begin{remark}
We could also establish the interval property of \(\mathcal W\) directly
from continuity of \(\varepsilon\mapsto w_\varepsilon\) and apply the same nested
continuum argument to the scalar curve \((0,\infty)\ni\varepsilon\mapsto w_\varepsilon\in[0,1]\). We use here Theorem~\ref{thm:selection-connected} because it gives the statement at the level of transport plans.
\end{remark}

\begin{proof}[Proof of Theorem~\ref{thm:cantor-cluster}]
Proposition~\ref{prop:cantor-all-limits} gives
\[
\Clust(P_\varepsilon)=\{P_w:w\in\mathcal W\},
\]
and Lemma~\ref{lem:cantor-W-interval} shows that \(\mathcal W\) is a non-degenerate
compact interval in \([0,1]\). Proposition~\ref{prop:cantor-nonconvergence}, together with
\eqref{eq:cantor-wpm} and Corollary~\ref{cor:cantor-wpm-closed}, yields
\[
w_{\varepsilon_{2n}}\to w^+,\qquad
w_{\varepsilon_{2n+1}}\to w^-,
\qquad
w^+>w^-.
\]
Recall that \(P_{w^+}\ne P_{w^-}\), so the nonconvergence statement follows immediately.
\end{proof}

\begin{proof}[Proof of Theorem~\ref{thm:intro-counterexample}]
We take \(X=G=\{0,1\}^{\mathbb N}\) with the dyadic ultrametric, and let
\[
\mu=\Bigl(\tfrac12\delta_0+\tfrac12\delta_1\Bigr)^{\otimes\mathbb N}.
\]
This implies that \(X\) is compact and \(\mu\) is atomless. By Lemmas~\ref{lem:cantor-continuity} and~\ref{lem:cantor-lipschitz}, the cost \(C\) is bounded and Lipschitz on \(G^2\). Proposition~\ref{prop:cantor-opt-face} shows that the unregularised problem has multiple minimisers and that every optimal plan is singular with respect to \(\mu\otimes\mu\). By Theorem~\ref{thm:cantor-cluster}, we obtain the interval description of \(\Clust(P_\varepsilon)\). In particular, \(P_\varepsilon\) does not converge weakly as \(\varepsilon\downarrow0\).
\end{proof}

\appendix

\section{Variational criteria for cluster membership and full convergence}
\label{sec:selection-criteria}
We prove two first-order variational criteria in a general Polish optimal-transport problem: one for cluster membership and one for full convergence. We keep the same
assumptions and notation of Section~\ref{sec:general-theory}. We first establish the local criterion, and then the exterior one. We use the bounded-Lipschitz metric \(d_{\mathrm{BL}}\), which metrises weak convergence on the coupling set. Let \(S:=X\times Y\), and fix a complete metric \(\varrho\) on \(S\) that generates its topology and satisfies
\[
0\le \varrho\le 1.
\]
Here,
\[
\mathrm{Lip}_{\varrho}(f):=\sup_{\substack{z,z'\in S\\ z\neq z'}}\frac{|f(z)-f(z')|}{\varrho(z,z')}.
\]
We also define the bounded-Lipschitz metric on \(\mathcal P(S)\) by
\[
d_{\mathrm{BL}}(P,Q)
:=
\sup\Bigl\{
\Big|\int_S f\,\dd(P-Q)\Big|:
f\in C_b(S),\ \|f\|_\infty\le 1,\ \mathrm{Lip}_{\varrho}(f)\le 1
\Bigr\}.
\]
We use the fact that \(d_{\mathrm{BL}}\) metrises weak convergence on
\(\mathcal P(S)\), and write
\[
d_{\mathrm{TV}}(P,Q):=\sup_A|P(A)-Q(A)|
\]
for total variation distance. For every \(P,Q\in\mathcal P(S)\),
\begin{equation}\label{eq:universal-bl-tv}
d_{\mathrm{BL}}(P,Q)\le 2\,d_{\mathrm{TV}}(P,Q).
\end{equation}
Indeed, for every admissible \(f\) in the definition of \(d_{\mathrm{BL}}\),
\[
\left|\int_S f\,\dd(P-Q)\right|
\le
\sup_{\|g\|_\infty\le1}\left|\int_S g\,\dd(P-Q)\right|
=
2\,d_{\mathrm{TV}}(P,Q).
\]
It suffices to take the supremum over \(f\) to obtain \eqref{eq:universal-bl-tv}.

Let
\[
\mathcal V(\varepsilon)
:=
\inf_{\pi\in\Pi(\mu,\nu)}
\bigl(\Delta(\pi)+\varepsilon\Ent(\pi)\bigr)
=
F_\varepsilon(\pi_\varepsilon)-C_0.
\]
For \(\bar\pi\in\Pi(\mu,\nu)\), \(r>0\), and \(\varepsilon>0\), let
\[
\mathcal V_{\bar\pi,r}(\varepsilon)
:=
\inf\Bigl\{
\Delta(\pi)+\varepsilon \Ent(\pi):
\pi\in\Pi(\mu,\nu),\ d_{\mathrm{BL}}(\pi,\bar\pi)\le r
\Bigr\}\in[0,\infty].
\]
By definition, we obtain
\[
\mathcal V(\varepsilon)\le \mathcal V_{\bar\pi,r}(\varepsilon)
\qquad\text{for all }\bar\pi\in\Pi(\mu,\nu),\ r>0,\ \varepsilon>0.
\]
For \(\bar\pi\in\Pi(\mu,\nu)\), \(r>0\), and \(\varepsilon>0\), let
\[
\mathcal V^{\mathrm{out}}_{\bar\pi,r}(\varepsilon)
:=
\inf\Bigl\{
\Delta(\pi)+\varepsilon \Ent(\pi):
\pi\in\Pi(\mu,\nu),\ d_{\mathrm{BL}}(\pi,\bar\pi)\ge r
\Bigr\},
\]
with the convention \(\inf\varnothing:=+\infty\).

\begin{theorem}[Local and exterior selection criteria]\label{thm:selection-criteria}
We assume the hypotheses from Section~\ref{sec:entropic-prelim}, and let
\(\bar\pi\in\Pi(\mu,\nu)\).
\begin{enumerate}[label=(\roman*),leftmargin=*]
\item It is true that \(\bar\pi\in\Omega(\mu,\nu\mid C)\) if and only if, for every \(r>0\),
\[
\liminf_{\varepsilon\downarrow0}
\frac{\mathcal V_{\bar\pi,r}(\varepsilon)-\mathcal V(\varepsilon)}{\varepsilon}=0.
\]
\item \(\pi_\varepsilon\Rightarrow\bar\pi\) as \(\varepsilon\downarrow0\) holds if and only if, for every \(r>0\),
\[
\liminf_{\varepsilon\downarrow0}
\frac{\mathcal V^{\mathrm{out}}_{\bar\pi,r}(\varepsilon)-\mathcal V(\varepsilon)}{\varepsilon}>0.
\]
\end{enumerate}
\end{theorem}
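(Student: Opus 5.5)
The plan is to compare every competitor with the entropic minimiser \(\pi_\varepsilon\) through a Pythagorean inequality,
\[
\Delta(\pi)+\varepsilon\Ent(\pi)-\mathcal V(\varepsilon)\;\ge\;\varepsilon\,\mathrm{KL}(\pi\|\pi_\varepsilon),\qquad \pi\in\Pi(\mu,\nu),
\]
and then to convert the right-hand side into a bound on \(d_{\mathrm{BL}}\) with Pinsker's inequality and \eqref{eq:universal-bl-tv}:
\[
\mathrm{KL}(\pi\|\pi_\varepsilon)\ge 2\,d_{\mathrm{TV}}(\pi,\pi_\varepsilon)^2\ge \tfrac12\, d_{\mathrm{BL}}(\pi,\pi_\varepsilon)^2 .
\]
This gives the key estimate
\[
\frac{\Delta(\pi)+\varepsilon\Ent(\pi)-\mathcal V(\varepsilon)}{\varepsilon}\ge \tfrac12\,d_{\mathrm{BL}}(\pi,\pi_\varepsilon)^2 .
\]
Both quotients in the theorem are automatically \(\ge0\), since \(\mathcal V\le\mathcal V_{\bar\pi,r}\) and \(\mathcal V\le \mathcal V^{\mathrm{out}}_{\bar\pi,r}\).

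To obtain the Pythagorean inequality, I would set \(m:=\inf C\). Then \(e^{-C/\varepsilon}\le e^{-m/\varepsilon}\), so \(c_\varepsilon=\int e^{-C/\varepsilon}\dd\Lambda\in(0,\infty)\) and \(R_\varepsilon\) is well defined. The proof of Lemma~\ref{lem:entropy-identity} goes through whenever \(J(\pi)<\infty\), and boundedness of \(C\) is only used there to get \(R_\varepsilon\sim\Lambda\). If \(J(\pi)=\infty\) or \(\Ent(\pi)=\infty\), the left-hand side is \(+\infty\) and there is nothing to prove. It therefore suffices to show that
\[
\mathrm{KL}(\pi\|R_\varepsilon)\ge \mathrm{KL}(\pi\|\pi_\varepsilon)+\mathrm{KL}(\pi_\varepsilon\|R_\varepsilon).
\]
This is Csisz\'ar's Pythagorean inequality for the \(I\)-projection \(\pi_\varepsilon\) of \(R_\varepsilon\) onto the convex set \(\Pi(\mu,\nu)\) \cite{Csiszar1975}. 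Alternatively, one can derive it directly by differentiating \(t\mapsto F_\varepsilon((1-t)\pi_\varepsilon+t\pi)\) at \(t=0^+\) and using the minimality of \(\pi_\varepsilon\). I expect this to be the main technical obstacle: checking integrability on the full Polish, unbounded-cost setting, where \(\log(\dd\pi_\varepsilon/\dd\Lambda)\) need not be bounded. If the derivative route is messy, I would cite Csisz\'ar's Theorem 2.2 directly and restrict to \(\pi\) of finite \(F_\varepsilon\)-value.

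For part (i), the forward implication is direct. If \(\pi_{\varepsilon_k}\Rightarrow\bar\pi\), then for each \(r\) we have \(d_{\mathrm{BL}}(\pi_{\varepsilon_k},\bar\pi)\le r\) eventually. Then \(\pi_{\varepsilon_k}\) is admissible in \(\mathcal V_{\bar\pi,r}(\varepsilon_k)\), so \(\mathcal V_{\bar\pi,r}(\varepsilon_k)=\mathcal V(\varepsilon_k)\) and the liminf is \(0\). For the converse, apply the hypothesis with \(r=1/j\): there are \(\varepsilon_j<\min\{1/j,\varepsilon_{j-1}/2\}\) and \(\pi^j\) with \(d_{\mathrm{BL}}(\pi^j,\bar\pi)\le 1/j\) whose quotient is at most \(1/j^2\). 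The key estimate gives \(d_{\mathrm{BL}}(\pi^j,\pi_{\varepsilon_j})\le \sqrt2/j\), so \(\pi_{\varepsilon_j}\Rightarrow\bar\pi\) and \(\bar\pi\in\Omega(\mu,\nu\mid C)\).

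For part (ii), suppose first that \(\pi_\varepsilon\Rightarrow\bar\pi\) and fix \(r>0\). For \(\varepsilon\) small we have \(d_{\mathrm{BL}}(\pi_\varepsilon,\bar\pi)<r/2\). Any \(\pi\) with \(d_{\mathrm{BL}}(\pi,\bar\pi)\ge r\) then satisfies \(d_{\mathrm{BL}}(\pi,\pi_\varepsilon)\ge r/2\). The key estimate bounds the quotient below by \(r^2/8\), and the convention \(\inf\varnothing=+\infty\) covers the empty case. Conversely, if \(\pi_\varepsilon\) does not converge to \(\bar\pi\), then compactness of \(\Pi(\mu,\nu)\) gives \(\varepsilon_k\downarrow0\) with \(\pi_{\varepsilon_k}\Rightarrow\pi'\ne\bar\pi\). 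Set \(r:=d_{\mathrm{BL}}(\pi',\bar\pi)/2\). Eventually \(\pi_{\varepsilon_k}\) is admissible for \(\mathcal V^{\mathrm{out}}_{\bar\pi,r}(\varepsilon_k)\), so \(\mathcal V^{\mathrm{out}}_{\bar\pi,r}(\varepsilon_k)=\mathcal V(\varepsilon_k)\). The liminf is then \(0\), contradicting the hypothesis.
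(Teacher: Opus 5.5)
Your proof is correct. Its architecture for (i) and (ii) matches the paper (Propositions~\ref{prop:canonical-local-gap} and~\ref{prop:canonical-exterior-gap}); the real difference is how you obtain the quadratic stability estimate.

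\textbf{The paper's route.} Lemma~\ref{lem:strong-stability-universal} compares \(\pi\) with \(\pi_\varepsilon\) through the midpoint \(M=(\pi+\pi_\varepsilon)/2\). The pointwise identity for \(m\log m\), together with \(F_\varepsilon(\pi_\varepsilon)\le F_\varepsilon(M)\), gives \(F_\varepsilon(\pi)-F_\varepsilon(\pi_\varepsilon)\ge\varepsilon\bigl(\mathrm{KL}(\pi\|M)+\mathrm{KL}(\pi_\varepsilon\|M)\bigr)\). Pinsker then yields \(\varepsilon\,d_{\mathrm{TV}}^2\), hence \(\tfrac{\varepsilon}{4}d_{\mathrm{BL}}^2\).

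\textbf{Your route.} You use Csisz\'ar's Pythagorean inequality for the \(I\)-projection of \(R_\varepsilon\). This gives the sharper bound \(\varepsilon\,\mathrm{KL}(\pi\|\pi_\varepsilon)\), and so the constant \(\tfrac12\) in place of \(\tfrac14\).

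\textbf{What each buys.} Your route costs some bookkeeping that the paper avoids.
\begin{enumerate}[label=(\arabic*),leftmargin=*]
\item You must extend the entropy identity beyond bounded costs. This works: \(e^{-C/\varepsilon}>0\) already gives \(R_\varepsilon\sim\Lambda\), and \(J(\pi)<\infty\) supplies the needed integrability.
\item You must recognise \(\pi_\varepsilon\) as the \(I\)-projection. This is cleanest on the convex set \(\{\pi\in\Pi(\mu,\nu):J(\pi)<\infty\}\), where the identity holds.
\end{enumerate}
After that, Csisz\'ar's Theorem~2.2 applies verbatim. It needs no boundedness of \(\log(\dd\pi_\varepsilon/\dd\Lambda)\), so the obstacle you anticipated does not arise. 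The midpoint argument, by contrast, is self-contained and uses minimality against a single competitor.

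\textbf{A smaller difference.} In the converse of (i) you work with near-minimisers for \(\mathcal V_{\bar\pi,1/j}(\varepsilon_j)\) instead of exact minimisers. You therefore never need Lemma~\ref{lem:local-minimisers-universal}. This is slightly more economical, since attainment plays no role in the estimate.
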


\begin{remark}
Since \(d_{\mathrm{BL}}\) metrises weak convergence on \(\Pi(\mu,\nu)\), the
metric-ball formulation is equivalent to the usual one in terms of weak
neighbourhoods.
\end{remark}

\begin{lemma}[Existence of local minimisers]\label{lem:local-minimisers-universal}
For every \(\bar\pi\in\Pi(\mu,\nu)\), every \(r>0\), and every
\(\varepsilon>0\), the value \(\mathcal V_{\bar\pi,r}(\varepsilon)\) is
attained.
\end{lemma}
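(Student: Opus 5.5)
The plan is the direct method of the calculus of variations on the closed ball
\[
B:=\{\pi\in\Pi(\mu,\nu):d_{\mathrm{BL}}(\pi,\bar\pi)\le r\}.
\]
First, \(B\) is nonempty, since it contains \(\bar\pi\). It is also weakly compact: \(\Pi(\mu,\nu)\) is weakly compact by Proposition~\ref{prop:standard-eot-facts}, and \(B\) is a closed subset because \(\pi\mapsto d_{\mathrm{BL}}(\pi,\bar\pi)\) is weakly continuous, \(d_{\mathrm{BL}}\) being a metric for the weak topology. Next, the objective \(\pi\mapsto\Delta(\pi)+\varepsilon\Ent(\pi)=J(\pi)-C_0+\varepsilon\Ent(\pi)\) is weakly lower semicontinuous on \(\Pi(\mu,\nu)\). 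This follows from the lower semicontinuity of \(J\) and \(\Ent\) in Proposition~\ref{prop:standard-eot-facts}, because both functionals are bounded below (\(J\ge\inf C>-\infty\), \(\Ent\ge0\)), so their nonnegative combination is again lower semicontinuous with values in \((-\infty,+\infty]\).

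We then distinguish two cases. If \(\mathcal V_{\bar\pi,r}(\varepsilon)=+\infty\), every \(\pi\in B\) attains the infimum, for instance \(\bar\pi\) itself. This case can genuinely occur, e.g.\ when \(\bar\pi\) is singular and \(r\) is small. Otherwise we take a minimising sequence \((\pi_k)\subset B\) and use compactness to extract a weakly convergent subsequence \(\pi_{k_j}\Rightarrow\pi^*\in B\). Lower semicontinuity then gives
\[
\Delta(\pi^*)+\varepsilon\Ent(\pi^*)\le\liminf_j\bigl(\Delta(\pi_{k_j})+\varepsilon\Ent(\pi_{k_j})\bigr)=\mathcal V_{\bar\pi,r}(\varepsilon),
\]
so \(\pi^*\) attains the value. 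Compactness of \(B\) in a metrisable space ensures that extracting a subsequence is legitimate.

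No step is a real obstacle. The only point needing care is the closedness of \(B\): it uses that \(d_{\mathrm{BL}}\) metrises weak convergence on \(\mathcal P(S)\), which the paper takes as given, together with the fact that a closed ball of a metric is closed in the topology the metric induces. A secondary point is the convention for \(+\infty\) values, which the case split above handles.
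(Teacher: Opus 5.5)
Your proof is correct and is essentially the paper's argument: the closed \(d_{\mathrm{BL}}\)-ball is weakly closed in the compact set \(\Pi(\mu,\nu)\), the objective \(\Delta+\varepsilon\Ent\) is weakly lower semicontinuous, and the direct method then produces a minimiser. One aside is wrong but does no harm. The case \(\mathcal V_{\bar\pi,r}(\varepsilon)=+\infty\) never occurs, even when \(\bar\pi\) is singular: block approximations of \(\bar\pi\) on fine finite partitions converge weakly to \(\bar\pi\) and have bounded density with respect to \(\mu\otimes\nu\), hence finite entropy and finite cost since \(C\in L^1(\mu\otimes\nu)\), so your case split is simply vacuous.
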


\begin{proof}
Since \(d_{\mathrm{BL}}\) metrises the weak topology on \(\mathcal P(S)\), the
closed ball
\[
B_r(\bar\pi):=\{\pi\in\Pi(\mu,\nu):d_{\mathrm{BL}}(\pi,\bar\pi)\le r\}
\]
is weakly closed in \(\Pi(\mu,\nu)\).
By Proposition~\ref{prop:standard-eot-facts}, we already know that \(B_r(\bar\pi)\) is weakly compact. The weak lower semicontinuity of \(J\) and \(\Ent\), again from
Proposition~\ref{prop:standard-eot-facts}, implies that the map
\[
\pi\longmapsto \Delta(\pi)+\varepsilon \Ent(\pi)
\]
is weakly lower semicontinuous on \(B_r(\bar\pi)\). This implies that the infimum is
attained.
\end{proof}

\begin{lemma}[Strong stability of the entropic minimiser]\label{lem:strong-stability-universal}
For every \(\varepsilon>0\) and \(\pi\in\Pi(\mu,\nu)\), it is true that
\begin{equation}\label{eq:universal-stability-tv}
F_\varepsilon(\pi)-F_\varepsilon(\pi_\varepsilon)
\ge
\varepsilon\, d_{\mathrm{TV}}(\pi,\pi_\varepsilon)^2.
\end{equation}
This implies
\begin{equation}\label{eq:universal-stability-bl}
F_\varepsilon(\pi)-F_\varepsilon(\pi_\varepsilon)
\ge
\frac{\varepsilon}{4}\, d_{\mathrm{BL}}(\pi,\pi_\varepsilon)^2.
\end{equation}
\end{lemma}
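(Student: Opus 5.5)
The plan is a direct convexity argument along the segment from \(\pi_\varepsilon\) to \(\pi\), combined with Pinsker's inequality. This avoids Schr\"odinger potentials, which need not be bounded in the present Polish \(L^1\) setting.

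First, reduce to the case \(F_\varepsilon(\pi)<\infty\); otherwise \eqref{eq:universal-stability-tv} is trivial. By Proposition~\ref{prop:standard-eot-facts}, \(F_\varepsilon(\pi_\varepsilon)<\infty\). Since \(C\) is bounded below and the values are finite, both \(J\) and \(\Ent\) are then finite at \(\pi\) and at \(\pi_\varepsilon\). For \(t\in(0,1)\) set \(\pi_t:=(1-t)\pi_\varepsilon+t\pi\in\Pi(\mu,\nu)\).

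The key ingredient is the exact mixture identity for relative entropy. For \(P,Q\ll\Lambda\) with finite entropy and \(M=(1-t)P+tQ\),
\[
(1-t)\mathrm{KL}(P\|\Lambda)+t\,\mathrm{KL}(Q\|\Lambda)-\mathrm{KL}(M\|\Lambda)
=(1-t)\mathrm{KL}(P\|M)+t\,\mathrm{KL}(Q\|M).
\]
This follows by writing \(\log\frac{\dd P}{\dd\Lambda}=\log\frac{\dd P}{\dd M}+\log\frac{\dd M}{\dd\Lambda}\) and integrating. Integrability is fine because \(\frac{\dd P}{\dd M}\le 1/(1-t)\) and \(\frac{\dd Q}{\dd M}\le 1/t\), so the terms \(\log\frac{\dd P}{\dd M}\) and \(\log\frac{\dd Q}{\dd M}\) are bounded above. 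This is the one step where some care with integrability is needed; I expect it to be the main technical point, though it is routine.

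Since \(J\) is affine on its finite domain, minimality of \(\pi_\varepsilon\) gives
\[
0\le F_\varepsilon(\pi_t)-F_\varepsilon(\pi_\varepsilon)
= t\bigl(F_\varepsilon(\pi)-F_\varepsilon(\pi_\varepsilon)\bigr)
-\varepsilon\bigl[(1-t)\mathrm{KL}(\pi_\varepsilon\|\pi_t)+t\,\mathrm{KL}(\pi\|\pi_t)\bigr].
\]
Divide by \(t\), drop the nonnegative term \(\frac{1-t}{t}\mathrm{KL}(\pi_\varepsilon\|\pi_t)\), and apply Pinsker's inequality \(\mathrm{KL}(\pi\|\pi_t)\ge 2\,d_{\mathrm{TV}}(\pi,\pi_t)^2\). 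Since \(d_{\mathrm{TV}}(\pi,\pi_t)=(1-t)\,d_{\mathrm{TV}}(\pi,\pi_\varepsilon)\), this yields
\[
F_\varepsilon(\pi)-F_\varepsilon(\pi_\varepsilon)\ge 2\varepsilon(1-t)^2 d_{\mathrm{TV}}(\pi,\pi_\varepsilon)^2 .
\]
Letting \(t\downarrow0\) gives the bound with constant \(2\), which is stronger than \eqref{eq:universal-stability-tv}.

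Finally, \eqref{eq:universal-stability-bl} follows by inserting \eqref{eq:universal-bl-tv} in the form \(d_{\mathrm{TV}}\ge \tfrac12 d_{\mathrm{BL}}\) and squaring.
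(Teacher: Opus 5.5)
Your proposal is correct and follows the paper's argument: the paper also combines the exact mixture identity for relative entropy along the segment between \(\pi\) and \(\pi_\varepsilon\), minimality of \(\pi_\varepsilon\), and Pinsker's inequality, and it justifies integrability the same way, via bounded likelihood ratios. The only difference is that the paper fixes the midpoint \(t=1/2\) and keeps both Kullback--Leibler terms, which gives the constant \(1\), whereas you drop one term and let \(t\downarrow0\), which gives the sharper constant \(2\).
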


\begin{proof}
If \(F_\varepsilon(\pi)=+\infty\), then \eqref{eq:universal-stability-tv} is
trivial. Let us assume \(F_\varepsilon(\pi)<\infty\). This means that \(J(\pi)<\infty\) and
\(\Ent(\pi)<\infty\), so \(\pi\ll\Lambda\). By Proposition~\ref{prop:standard-eot-facts}, also \(\pi_\varepsilon\ll\Lambda\). We set
\[
M:=\frac{\pi+\pi_\varepsilon}{2}\in\Pi(\mu,\nu).
\]
Given that \(J(\pi)\) and \(J(\pi_\varepsilon)\) are finite,
\[
J(M)=\frac12J(\pi)+\frac12J(\pi_\varepsilon).
\]
Let
\[
p:=\frac{\dd\pi}{\dd\Lambda},\qquad
q:=\frac{\dd\pi_\varepsilon}{\dd\Lambda},\qquad
m:=\frac{\dd M}{\dd\Lambda}=\frac{p+q}{2}.
\]
We use the convention \(0\log0=0\). On \(\{p+q=0\}\), we set all terms below equal to \(0\). On \(\{p+q>0\}\), the scalar identity we consider next is exact. Moreover, \(p/m\le2\) and \(q/m\le2\), so the two relative-entropy correction terms are bounded above by integrable multiples of \(p\) and \(q\). This implies that we may integrate the identity without ambiguity w.r.t. extended values. The pointwise identity
\[
m\log m
=
\frac12\,p\log p+\frac12\,q\log q
-\frac12\,p\log\frac{p}{m}
-\frac12\,q\log\frac{q}{m}
\]
holds. Once we have integrated against \(\Lambda\), we obtain
\[
\Ent(M)
=
\frac12\Ent(\pi)+\frac12\Ent(\pi_\varepsilon)
-\frac12\mathrm{KL}(\pi\|M)
-\frac12\mathrm{KL}(\pi_\varepsilon\|M).
\]
Therefore,
\[
F_\varepsilon(M)
=
\frac12F_\varepsilon(\pi)+\frac12F_\varepsilon(\pi_\varepsilon)
-\frac{\varepsilon}{2}\mathrm{KL}(\pi\|M)
-\frac{\varepsilon}{2}\mathrm{KL}(\pi_\varepsilon\|M).
\]
Since \(\pi_\varepsilon\) minimises \(F_\varepsilon\),
\[
F_\varepsilon(\pi_\varepsilon)\le F_\varepsilon(M).
\]
It follows that
\[
F_\varepsilon(\pi)-F_\varepsilon(\pi_\varepsilon)
\ge
\varepsilon\,\mathrm{KL}(\pi\|M)+\varepsilon\,\mathrm{KL}(\pi_\varepsilon\|M).
\]
By Pinsker's inequality, we obtain
\[
\mathrm{KL}(\pi\|M)\ge 2\,d_{\mathrm{TV}}(\pi,M)^2,
\qquad
\mathrm{KL}(\pi_\varepsilon\|M)\ge 2\,d_{\mathrm{TV}}(\pi_\varepsilon,M)^2.
\]
Since \(M=(\pi+\pi_\varepsilon)/2\), we have
\[
d_{\mathrm{TV}}(\pi,M)=d_{\mathrm{TV}}(\pi_\varepsilon,M)=\frac12\,d_{\mathrm{TV}}(\pi,\pi_\varepsilon).
\]
Therefore,
\[
F_\varepsilon(\pi)-F_\varepsilon(\pi_\varepsilon)
\ge
\varepsilon\, d_{\mathrm{TV}}(\pi,\pi_\varepsilon)^2.
\]
This proves \eqref{eq:universal-stability-tv}. Finally, by \eqref{eq:universal-bl-tv},
\[
d_{\mathrm{TV}}(\pi,\pi_\varepsilon)\ge \frac12 d_{\mathrm{BL}}(\pi,\pi_\varepsilon),
\]
and \eqref{eq:universal-stability-bl} follows immediately.
\end{proof}

For \(\bar\pi\in\Pi(\mu,\nu)\) and \(r>0\), we define
\[
\mathfrak G_r(\bar\pi)
:=
\liminf_{\varepsilon\downarrow0}
\frac{\mathcal V_{\bar\pi,r}(\varepsilon)-\mathcal V(\varepsilon)}{\varepsilon}
\in[0,\infty],
\]
and set
\[
\mathfrak G(\bar\pi):=\sup_{r>0}\mathfrak G_r(\bar\pi)\in[0,\infty].
\]

\begin{lemma}[Monotonicity in the radius]\label{lem:radius-monotone-universal}
For every \(\bar\pi\in\Pi(\mu,\nu)\), the function
\[
(0,\infty)\ni r\longmapsto \mathfrak G_r(\bar\pi)
\]
is nonincreasing. This implies
\[
\mathfrak G(\bar\pi)=\sup_{m\ge1}\mathfrak G_{1/m}(\bar\pi).
\]
\end{lemma}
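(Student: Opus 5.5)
The monotonicity comes from comparing the admissible sets of the two infima defining the local values. Fix \(\bar\pi\in\Pi(\mu,\nu)\) and radii \(0<r<r'\). The closed \(d_{\mathrm{BL}}\)-ball of radius \(r\) around \(\bar\pi\) is contained in the ball of radius \(r'\). The quantity \(\mathcal V_{\bar\pi,r}(\varepsilon)\) is an infimum of the same functional \(\Delta+\varepsilon\Ent\) over the smaller set. Hence, for every \(\varepsilon>0\),
\[
\mathcal V_{\bar\pi,r'}(\varepsilon)\le \mathcal V_{\bar\pi,r}(\varepsilon).
\]

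I will subtract \(\mathcal V(\varepsilon)\) from both sides and divide by \(\varepsilon>0\). First I note that \(\mathcal V(\varepsilon)=F_\varepsilon(\pi_\varepsilon)-C_0\) is finite: by Proposition~\ref{prop:standard-eot-facts}, \(F_\varepsilon(\pi_\varepsilon)<\infty\), and \(F_\varepsilon(\pi_\varepsilon)\ge \inf C>-\infty\). So the subtraction is legitimate even if \(\mathcal V_{\bar\pi,r}(\varepsilon)=+\infty\), using the convention \(+\infty-c=+\infty\). This gives, pointwise in \(\varepsilon\),
\[
\frac{\mathcal V_{\bar\pi,r'}(\varepsilon)-\mathcal V(\varepsilon)}{\varepsilon}
\le
\frac{\mathcal V_{\bar\pi,r}(\varepsilon)-\mathcal V(\varepsilon)}{\varepsilon}.
\]
Both quotients are nonnegative, because \(\mathcal V\le\mathcal V_{\bar\pi,r}\) as recorded before the theorem. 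Taking \(\liminf_{\varepsilon\downarrow0}\) preserves this inequality and yields \(\mathfrak G_{r'}(\bar\pi)\le\mathfrak G_r(\bar\pi)\), which is the claimed monotonicity.

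For the second assertion, I compare the supremum over all radii with the supremum over \(r=1/m\). The inequality \(\sup_{m\ge1}\mathfrak G_{1/m}(\bar\pi)\le \sup_{r>0}\mathfrak G_r(\bar\pi)\) is trivial, since each \(1/m\) is a positive radius. For the converse, given any \(r>0\) I choose \(m\) with \(1/m\le r\). Monotonicity then gives \(\mathfrak G_r(\bar\pi)\le\mathfrak G_{1/m}(\bar\pi)\). Taking the supremum over \(r\) gives equality in \([0,\infty]\).

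No step is a real obstacle. The only point needing care is the extended-value arithmetic, which is handled by the finiteness of \(\mathcal V(\varepsilon)\).
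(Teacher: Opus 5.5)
Your proposal is correct and follows essentially the same argument as the paper. The paper also uses the inclusion of the smaller ball in the larger one, subtracts \(\mathcal V(\varepsilon)\), divides by \(\varepsilon\) and passes to the \(\liminf\), and it leaves the supremum identity as immediate; your explicit check that \(\mathcal V(\varepsilon)\) is finite, and your choice of \(1/m\le r\), spell out details the paper leaves implicit.
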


\begin{proof}
If \(0<r_1\le r_2\), then
\[
\{\pi:d_{\mathrm{BL}}(\pi,\bar\pi)\le r_1\}
\subset
\{\pi:d_{\mathrm{BL}}(\pi,\bar\pi)\le r_2\},
\]
so
\[
\mathcal V_{\bar\pi,r_1}(\varepsilon)\ge \mathcal V_{\bar\pi,r_2}(\varepsilon)
\qquad\text{for every }\varepsilon>0.
\]
It suffices to subtract \(\mathcal V(\varepsilon)\), divide by \(\varepsilon\), and
take \(\liminf_{\varepsilon\downarrow0}\), which yields
\[
\mathfrak G_{r_1}(\bar\pi)\ge \mathfrak G_{r_2}(\bar\pi).
\]
The formula for \(\mathfrak G(\bar\pi)\) follows immediately.
\end{proof}

\begin{proposition}[Local criterion for cluster points]\label{prop:canonical-local-gap}
Using the assumptions of this section,
\[
\Omega(\mu,\nu\mid C)
=
\{\bar\pi\in\Pi(\mu,\nu):\mathfrak G(\bar\pi)=0\}.
\]
In other terms,
\[
\bar\pi\in\Omega(\mu,\nu\mid C)
\quad\Longleftrightarrow\quad
\forall r>0,\qquad
\liminf_{\varepsilon\downarrow0}
\frac{\mathcal V_{\bar\pi,r}(\varepsilon)-\mathcal V(\varepsilon)}{\varepsilon}=0.
\]
\end{proposition}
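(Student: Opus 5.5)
We prove both inclusions, using the strong stability estimate of Lemma~\ref{lem:strong-stability-universal} for one direction and a direct comparison with \(\pi_\varepsilon\) for the other. Throughout we use that \(\mathcal V(\varepsilon)=F_\varepsilon(\pi_\varepsilon)-C_0\), so for every \(\pi\),
\[
\Delta(\pi)+\varepsilon\Ent(\pi)-\mathcal V(\varepsilon)=F_\varepsilon(\pi)-F_\varepsilon(\pi_\varepsilon).
\]
By Lemma~\ref{lem:radius-monotone-universal}, \(\mathfrak G(\bar\pi)=0\) is equivalent to \(\mathfrak G_r(\bar\pi)=0\) for every \(r>0\). Since each \(\mathfrak G_r\ge0\), this is exactly the displayed reformulation.

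\emph{Cluster point \(\Rightarrow\) zero gap.} Let \(\bar\pi\in\Omega(\mu,\nu\mid C)\) and fix \(r>0\). Choose \(\varepsilon_k\downarrow0\) with \(\pi_{\varepsilon_k}\Rightarrow\bar\pi\). Since \(d_{\mathrm{BL}}\) metrises weak convergence, \(d_{\mathrm{BL}}(\pi_{\varepsilon_k},\bar\pi)\le r\) for all large \(k\). Thus \(\pi_{\varepsilon_k}\) is admissible in the definition of \(\mathcal V_{\bar\pi,r}(\varepsilon_k)\), which gives \(\mathcal V_{\bar\pi,r}(\varepsilon_k)\le\mathcal V(\varepsilon_k)\). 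Combined with the reverse inequality \(\mathcal V\le\mathcal V_{\bar\pi,r}\), the quotient vanishes along \(\varepsilon_k\). The liminf is therefore \(0\), and so \(\mathfrak G_r(\bar\pi)=0\).

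\emph{Zero gap \(\Rightarrow\) cluster point.} Assume \(\mathfrak G_r(\bar\pi)=0\) for every \(r>0\). Fix \(r>0\) and pick \(\varepsilon_k\downarrow0\) along which
\[
\frac{\mathcal V_{\bar\pi,r}(\varepsilon_k)-\mathcal V(\varepsilon_k)}{\varepsilon_k}\to 0.
\]
By Lemma~\ref{lem:local-minimisers-universal}, choose \(\rho_k\) in the closed ball \(B_r(\bar\pi)\) attaining \(\mathcal V_{\bar\pi,r}(\varepsilon_k)\); this value is finite along the chosen subsequence, since the quotient tends to \(0\). Then \eqref{eq:universal-stability-bl} gives
\[
\tfrac14 d_{\mathrm{BL}}(\rho_k,\pi_{\varepsilon_k})^2\le \frac{F_{\varepsilon_k}(\rho_k)-F_{\varepsilon_k}(\pi_{\varepsilon_k})}{\varepsilon_k}\to0.
\]
Hence \(d_{\mathrm{BL}}(\pi_{\varepsilon_k},\bar\pi)\le r+o(1)\), and so \(d_{\mathrm{BL}}(\pi_{\varepsilon_k},\bar\pi)\le 2r\) for all large \(k\). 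We apply this with \(r=1/m\) for each \(m\) and extract, diagonally, \(\varepsilon_m'\downarrow0\) with \(\varepsilon'_m<1/m\) and \(d_{\mathrm{BL}}(\pi_{\varepsilon'_m},\bar\pi)\le 2/m\). Then \(\pi_{\varepsilon'_m}\Rightarrow\bar\pi\), so \(\bar\pi\in\Omega(\mu,\nu\mid C)\). Equivalently, \(\bar\pi\) lies in every \(K_m\) of Lemma~\ref{lem:omega-nested-closure}.

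The main point to check is this second direction. The crucial input is the quadratic stability with constant proportional to \(\varepsilon\), which exactly matches the \(1/\varepsilon\) normalisation of the gap. One also has to make sure the liminf is realised along a sequence tending to zero with finite values, which is what allows the extraction above.
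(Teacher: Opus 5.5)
Your proof is correct and takes essentially the same route as the paper. The forward direction uses admissibility of \(\pi_{\varepsilon_k}\) in the local problem. The converse uses local minimisers from Lemma~\ref{lem:local-minimisers-universal} together with the \(\varepsilon\)-scaled quadratic stability estimate \eqref{eq:universal-stability-bl}. The only difference is bookkeeping: the paper chooses \(\varepsilon_n\) directly with gap at most \(\varepsilon_n/n^2\) at radius \(1/n\), whereas you fix \(r\), pass to a sequence realising the liminf, obtain the bound \(2r\), and then diagonalise over \(r=1/m\).
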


\begin{proof}
Let \(\bar\pi\in\Omega(\mu,\nu\mid C)\), and choose \(\varepsilon_n\downarrow0\) such that \(\pi_{\varepsilon_n}\Rightarrow\bar\pi\). Since \(d_{\mathrm{BL}}\) metrises
weak convergence, \(d_{\mathrm{BL}}(\pi_{\varepsilon_n},\bar\pi)\to0\). We fix
\(r>0\). For all sufficiently large \(n\), \(\pi_{\varepsilon_n}\) is
admissible in the local problem that defines \(\mathcal V_{\bar\pi,r}(\varepsilon_n)\). This implies
\[
\mathcal V_{\bar\pi,r}(\varepsilon_n)\le
\Delta(\pi_{\varepsilon_n})+\varepsilon_n \Ent(\pi_{\varepsilon_n})
=
\mathcal V(\varepsilon_n).
\]
Since
\(\mathcal V(\varepsilon_n)\le \mathcal V_{\bar\pi,r}(\varepsilon_n)\)
always holds, equality also holds for all large \(n\). We deduce that
\(\mathfrak G_r(\bar\pi)=0\). Recall that \(r>0\) was arbitrary, so \(\mathfrak G(\bar\pi)=0\).

For the converse, we assume that \(\mathfrak G(\bar\pi)=0\). By Lemma~\ref{lem:radius-monotone-universal},
\[
\mathfrak G_{1/n}(\bar\pi)=0
\qquad\text{for every }n\ge1.
\]
We construct a strictly decreasing sequence \((\varepsilon_n)\) such that
\[
0<\varepsilon_n<\min\Bigl\{\frac1n,\frac{\varepsilon_{n-1}}{2}\Bigr\}
\qquad (n\ge2),
\]
and
\begin{equation}\label{eq:epsn-choice-universal}
\mathcal V_{\bar\pi,1/n}(\varepsilon_n)-\mathcal V(\varepsilon_n)\le \frac{\varepsilon_n}{n^2}.
\end{equation}
For \(n=1\), we choose any \(\varepsilon_1\in(0,1)\) that satisfies
\eqref{eq:epsn-choice-universal}. Suppose that \(\varepsilon_{n-1}\) has been
chosen. Since \(\mathfrak G_{1/n}(\bar\pi)=0\), there exist arbitrarily small
\(\varepsilon>0\) such that
\[
\mathcal V_{\bar\pi,1/n}(\varepsilon)-\mathcal V(\varepsilon)\le \frac{\varepsilon}{n^2}.
\]
Let us choose one with
\[
0<\varepsilon<\min\Bigl\{\frac1n,\frac{\varepsilon_{n-1}}{2}\Bigr\},
\]
and call it \(\varepsilon_n\). This implies \(\varepsilon_n\downarrow0\), and
\eqref{eq:epsn-choice-universal} holds for every \(n\).

By Lemma~\ref{lem:local-minimisers-universal}, for each \(n\) there exists
\(\eta_n\in\Pi(\mu,\nu)\) such that
\[
d_{\mathrm{BL}}(\eta_n,\bar\pi)\le \frac1n
\]
and
\[
\Delta(\eta_n)+\varepsilon_n \Ent(\eta_n)=\mathcal V_{\bar\pi,1/n}(\varepsilon_n).
\]
Since
\[
\Delta(\pi_{\varepsilon_n})+\varepsilon_n \Ent(\pi_{\varepsilon_n})=\mathcal V(\varepsilon_n),
\]
estimate \eqref{eq:epsn-choice-universal} yields
\[
F_{\varepsilon_n}(\eta_n)-F_{\varepsilon_n}(\pi_{\varepsilon_n})
=
\mathcal V_{\bar\pi,1/n}(\varepsilon_n)-\mathcal V(\varepsilon_n)
\le \frac{\varepsilon_n}{n^2}.
\]
We apply Lemma~\ref{lem:strong-stability-universal}, and obtain
\[
\frac{\varepsilon_n}{4}\,d_{\mathrm{BL}}(\eta_n,\pi_{\varepsilon_n})^2
\le
F_{\varepsilon_n}(\eta_n)-F_{\varepsilon_n}(\pi_{\varepsilon_n})
\le \frac{\varepsilon_n}{n^2},
\]
so \(d_{\mathrm{BL}}(\eta_n,\pi_{\varepsilon_n})\le 2/n\). Therefore,
\[
d_{\mathrm{BL}}(\pi_{\varepsilon_n},\bar\pi)
\le
d_{\mathrm{BL}}(\pi_{\varepsilon_n},\eta_n)+d_{\mathrm{BL}}(\eta_n,\bar\pi)
\le
\frac2n+\frac1n=\frac3n.
\]
The above implies \(d_{\mathrm{BL}}(\pi_{\varepsilon_n},\bar\pi)\to0\), and
\(\pi_{\varepsilon_n}\Rightarrow\bar\pi\). Since \(\varepsilon_n\downarrow0\),
we conclude that \(\bar\pi\in\Omega(\mu,\nu\mid C)\).
\end{proof}

\subsection{Variational criterion for full convergence}\label{subsec:omega-exterior-gap}

We now prove the exterior-gap criterion.

\begin{proposition}[Exterior-gap criterion for full convergence]
\label{prop:canonical-exterior-gap}
Under the assumptions of this section, for \(\bar\pi\in\Pi(\mu,\nu)\), the
following statements are equivalent:
\begin{enumerate}[label=(\roman*),leftmargin=*]
\item \(\pi_\varepsilon\Rightarrow \bar\pi\) as \(\varepsilon\downarrow0\);
\item for every \(r>0\),
\[
\liminf_{\varepsilon\downarrow0}
\frac{\mathcal V^{\mathrm{out}}_{\bar\pi,r}(\varepsilon)-\mathcal V(\varepsilon)}{\varepsilon}>0.
\]
\end{enumerate}
\end{proposition}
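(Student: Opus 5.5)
The plan is to prove both implications by contraposition. One direction uses only the fact that \(\pi_\varepsilon\) is admissible in the exterior problem. The other uses the strong stability estimate of Lemma~\ref{lem:strong-stability-universal}, in the same way as in the proof of Proposition~\ref{prop:canonical-local-gap}. Throughout, note that \(\mathcal V(\varepsilon)\le\mathcal V^{\mathrm{out}}_{\bar\pi,r}(\varepsilon)\) for all \(\varepsilon,r\), since the exterior infimum runs over a subset of \(\Pi(\mu,\nu)\) (or over the empty set, giving \(+\infty\)). Hence the quotient in (ii) is always in \([0,\infty]\), and (ii) fails exactly when the \(\liminf\) equals \(0\) for some \(r\).

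\emph{(ii)\(\Rightarrow\)(i).} Suppose \(\pi_\varepsilon\not\Rightarrow\bar\pi\). Since \(d_{\mathrm{BL}}\) metrises weak convergence, there are \(r>0\) and \(\varepsilon_n\downarrow0\) with \(d_{\mathrm{BL}}(\pi_{\varepsilon_n},\bar\pi)\ge r\). Then \(\pi_{\varepsilon_n}\) is admissible for \(\mathcal V^{\mathrm{out}}_{\bar\pi,r}(\varepsilon_n)\), so
\[
\mathcal V^{\mathrm{out}}_{\bar\pi,r}(\varepsilon_n)\le\Delta(\pi_{\varepsilon_n})+\varepsilon_n\Ent(\pi_{\varepsilon_n})=\mathcal V(\varepsilon_n).
\]
Combined with the reverse inequality, the quotient vanishes along \(\varepsilon_n\). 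The \(\liminf\) is therefore \(0\), and (ii) fails.

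\emph{(i)\(\Rightarrow\)(ii).} Suppose (ii) fails for some \(r>0\). Then there exist \(\varepsilon_n\downarrow0\) and \(\delta_n\downarrow0\) with \(\mathcal V^{\mathrm{out}}_{\bar\pi,r}(\varepsilon_n)-\mathcal V(\varepsilon_n)\le\varepsilon_n\delta_n\). In particular the exterior value is finite, so the exterior set is nonempty. Instead of invoking attainment, I would take a near-minimiser \(\eta_n\in\Pi(\mu,\nu)\) with \(d_{\mathrm{BL}}(\eta_n,\bar\pi)\ge r\) and
\[
F_{\varepsilon_n}(\eta_n)-F_{\varepsilon_n}(\pi_{\varepsilon_n})\le 2\varepsilon_n\delta_n .
\]
Attainment would also hold, since the exterior set is weakly closed in the compact \(\Pi(\mu,\nu)\) and the functional is lower semicontinuous. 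By \eqref{eq:universal-stability-bl},
\[
\frac{\varepsilon_n}{4}\,d_{\mathrm{BL}}(\eta_n,\pi_{\varepsilon_n})^2\le2\varepsilon_n\delta_n,
\]
so \(d_{\mathrm{BL}}(\eta_n,\pi_{\varepsilon_n})\le\sqrt{8\delta_n}\to0\). The triangle inequality then gives \(d_{\mathrm{BL}}(\pi_{\varepsilon_n},\bar\pi)\ge r-\sqrt{8\delta_n}\), which stays above \(r/2\) for large \(n\). This contradicts \(\pi_\varepsilon\Rightarrow\bar\pi\).

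The only delicate point is the cancellation of the factor \(\varepsilon_n\): the stability bound carries exactly one power of \(\varepsilon\), which matches the normalisation by \(\varepsilon\) in the criterion. One must also check that \(F_\varepsilon\)-differences equal differences of \(\Delta+\varepsilon\Ent\). This holds because \(C_0\) is finite, so the two differ by the finite constant \(C_0\).
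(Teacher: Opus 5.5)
Your proof is correct and is essentially the paper's argument. For (ii)$\Rightarrow$(i) you use, as the paper does, that $\pi_{\varepsilon_n}$ is admissible in the exterior problem, and you sensibly skip the paper's unneeded extraction of a weakly convergent subsequence; your contrapositive proof of (i)$\Rightarrow$(ii) rests on the same stability estimate \eqref{eq:universal-stability-bl} and triangle inequality, which the paper applies directly (once $d_{\mathrm{BL}}(\pi_\varepsilon,\bar\pi)<r/2$) to get the explicit bound $\liminf_{\varepsilon\downarrow0}\bigl(\mathcal V^{\mathrm{out}}_{\bar\pi,r}(\varepsilon)-\mathcal V(\varepsilon)\bigr)/\varepsilon\ge r^2/16$ without near-minimisers.
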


\begin{proof}[Proof of Proposition~\ref{prop:canonical-exterior-gap}]
We shall first assume that \(\pi_\varepsilon\Rightarrow \bar\pi\) as \(\varepsilon\downarrow0\), and fix \(r>0\). For all sufficiently small \(\varepsilon\),
\[
d_{\mathrm{BL}}(\pi_\varepsilon,\bar\pi)<\frac r2.
\]
If \(\eta\in\Pi(\mu,\nu)\) satisfies \(d_{\mathrm{BL}}(\eta,\bar\pi)\ge r\), then
\[
d_{\mathrm{BL}}(\eta,\pi_\varepsilon)\ge \frac r2.
\]
By Lemma~\ref{lem:strong-stability-universal}, we have
\[
\Delta(\eta)+\varepsilon\Ent(\eta)-\mathcal V(\varepsilon)
=
F_\varepsilon(\eta)-F_\varepsilon(\pi_\varepsilon)
\ge
\frac{\varepsilon}{4}\,d_{\mathrm{BL}}(\eta,\pi_\varepsilon)^2
\ge
\frac{\varepsilon r^2}{16}.
\]
We now take the infimum over all exterior admissible \(\eta\), and obtain
\[
\mathcal V^{\mathrm{out}}_{\bar\pi,r}(\varepsilon)-\mathcal V(\varepsilon)
\ge
\frac{\varepsilon r^2}{16}
\]
for all sufficiently small \(\varepsilon\). If the exterior admissible set is
empty, the inequality is trivial. It follows that
\[
\liminf_{\varepsilon\downarrow0}
\frac{\mathcal V^{\mathrm{out}}_{\bar\pi,r}(\varepsilon)-\mathcal V(\varepsilon)}
{\varepsilon}
\ge
\frac{r^2}{16}>0.
\]

For the converse, we assume \textit{(ii)} and suppose that
\(\pi_\varepsilon\) does not converge weakly to \(\bar\pi\). Given that
\(d_{\mathrm{BL}}\) metrises weak convergence on \(\Pi(\mu,\nu)\), there exist
\(\delta>0\) and a sequence \(\varepsilon_n\downarrow0\) such that
\[
d_{\mathrm{BL}}(\pi_{\varepsilon_n},\bar\pi)\ge \delta
\qquad\text{for all }n.
\]
By weak compactness of \(\Pi(\mu,\nu)\), we may pass to a subsequence and
assume that
\[
\pi_{\varepsilon_n}\Rightarrow\eta
\qquad\text{for some }\eta\in\Pi(\mu,\nu).
\]
Since \(d_{\mathrm{BL}}\) metrises weak convergence, we get
\[
d_{\mathrm{BL}}(\eta,\bar\pi)\ge \delta,
\]
so\(\eta\neq\bar\pi\). We set
\[
r:=\frac12\,d_{\mathrm{BL}}(\eta,\bar\pi)>0.
\]
Since \(d_{\mathrm{BL}}(\pi_{\varepsilon_n},\eta)\to0\), we have
\(d_{\mathrm{BL}}(\pi_{\varepsilon_n},\bar\pi)\ge r\) for all sufficiently
large \(n\). This implies that \(\pi_{\varepsilon_n}\) is admissible for
\(\mathcal V^{\mathrm{out}}_{\bar\pi,r}(\varepsilon_n)\). Because \(\pi_{\varepsilon_n}\) is the unique minimiser of \(\Delta+\varepsilon_n \Ent\), we obtain
\[
\mathcal V^{\mathrm{out}}_{\bar\pi,r}(\varepsilon_n)=\mathcal V(\varepsilon_n)
\qquad\text{for all sufficiently large }n.
\]
But this is a contradiction to \textit{(ii)}, which implies \(\pi_\varepsilon\Rightarrow\bar\pi\).
\end{proof}

\begin{proof}[Proof of Theorem~\ref{thm:selection-criteria}]
Part \textit{(i)} is Proposition~\ref{prop:canonical-local-gap}, and part
\textit{(ii)} is Proposition~\ref{prop:canonical-exterior-gap}.
\end{proof}

\section{Details of the \texorpdfstring{$2\times2$}{2x2} Sinkhorn scaling}\label{sec:appendix-sinkhorn}
In this section, we provide the algebraic details of the diagonal-scaling calculations for the block kernel matrix \(\mathsf K_\varepsilon\) that we used in Proposition~\ref{prop:cantor-pi-eps-explicit}.

\begin{lemma}[Diagonal scaling identities]\label{lem:appendix-sinkhorn-scaling}
We fix \(\varepsilon>0\), and let \(\mathsf K_\varepsilon\) be the block kernel
matrix in \eqref{eq:cantor-kernel-matrix}. Let \(I_0(\varepsilon),I_1(\varepsilon),J_0(\varepsilon),J_1(\varepsilon)\) be as in
Section~\ref{sec:cantor-schrodinger}, and set \(A_\varepsilon,B_\varepsilon,w_\varepsilon\) by \eqref{eq:cantor-AB-def}--\eqref{eq:cantor-what-def}. Let \(u_\varepsilon,v_\varepsilon\) be given by
\eqref{eq:cantor-sinkhorn-constants}, and write
\(U_\varepsilon:=\diag(u_\varepsilon(0),u_\varepsilon(1))\) and
\(V_\varepsilon:=\diag(v_\varepsilon(0),v_\varepsilon(1))\). It follows that the scaled
matrix \(U_\varepsilon\mathsf K_\varepsilon V_\varepsilon\) has unit row sums
and unit column sums. Moreover,
\[
u_\varepsilon(0)v_\varepsilon(0)I_0(\varepsilon)
=
u_\varepsilon(1)v_\varepsilon(1)J_0(\varepsilon)
=
w_\varepsilon,
\]
\[
u_\varepsilon(0)v_\varepsilon(1)I_1(\varepsilon)
=
u_\varepsilon(1)v_\varepsilon(0)J_1(\varepsilon)
=
1-w_\varepsilon.
\]
In particular,
\[
\frac12\Bigl(u_\varepsilon(0)v_\varepsilon(0)I_0(\varepsilon)+u_\varepsilon(1)v_\varepsilon(1)J_0(\varepsilon)\Bigr)=w_\varepsilon.
\]
\end{lemma}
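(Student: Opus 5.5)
This is a direct algebraic verification, so the plan is to compute the four entries of \(U_\varepsilon\mathsf K_\varepsilon V_\varepsilon\) explicitly from \eqref{eq:cantor-sinkhorn-constants}. Unit row and column sums then follow from the entry identities, because each row and each column contains exactly one entry equal to \(w_\varepsilon\) and one equal to \(1-w_\varepsilon\). I suppress \(\varepsilon\) and write \(I_0,I_1,J_0,J_1,A,B,w\).

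\emph{Diagonal entries.} Using \(v(0)=1\) and \(u(0)=w/I_0\), the \((0,0)\) entry is \(u(0)v(0)I_0=w\) at once. For the \((1,1)\) entry we have
\[
u(1)v(1)J_0=\frac{1-w}{J_1}\sqrt{\frac{I_0J_1}{J_0I_1}}\,J_0=(1-w)\sqrt{\frac{I_0J_0}{I_1J_1}}=(1-w)\frac{A}{B}.
\]
Since \(1-w=B/(A+B)\), this equals \(A/(A+B)=w\).

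\emph{Off-diagonal entries.} The \((1,0)\) entry is \(u(1)v(0)J_1=1-w\) immediately. For the \((0,1)\) entry we have
\[
u(0)v(1)I_1=\frac{w}{I_0}\sqrt{\frac{I_0J_1}{J_0I_1}}\,I_1=w\sqrt{\frac{I_1J_1}{I_0J_0}}=w\frac{B}{A}=\frac{B}{A+B}=1-w.
\]

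\emph{Row and column sums.} Rows correspond to \(x_1\) and columns to \(y_1\). Row \(0\) is \((w,1-w)\), row \(1\) is \((1-w,w)\), and the columns have the same entries. Hence all row and column sums equal \(1\). The final averaged identity \(\tfrac12(w+w)=w\) is then immediate.

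The only care needed is to place each of \(I_1\) and \(J_1\) correctly in the kernel matrix \eqref{eq:cantor-kernel-matrix}: the \((0,1)\) entry is \(I_1\) and the \((1,0)\) entry is \(J_1\). This placement is what makes the square-root choice of \(v(1)\) balance the two off-diagonal entries. Positivity of all four quantities, which holds since they lie in \((0,1/2]\), ensures that the square roots and quotients are well defined.
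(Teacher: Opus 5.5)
Your proposal is correct and is essentially the paper's argument: both are a direct computation that rests on the ratios \(v(1)I_1/I_0=B/A\) and \(v(1)J_0/J_1=A/B\). The only difference is the order: you compute the four entries first and read off the unit row and column sums, while the paper checks the sums first and then extracts the entries.
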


\begin{proof}
We first remove the dependence on \(\varepsilon\). In this case, we use
\(I_0,I_1,J_0,J_1,A,B,w,u,v\) to denote the respective quantities, and
\(\mathsf K,U,V\) denote \(\mathsf K_\varepsilon,U_\varepsilon,V_\varepsilon\).
By definition,
\[
A:=\sqrt{I_0J_0},\qquad B:=\sqrt{I_1J_1},\qquad w:=\frac{A}{A+B},
\]
and
\[
v(0)=1,\qquad v(1)=\sqrt{\frac{I_0J_1}{J_0I_1}},\qquad
u(0)=\frac{w}{I_0},\qquad u(1)=\frac{1-w}{J_1}.
\]

\textit{Row sums.}
We use \(v(1)I_1/I_0=\sqrt{I_1J_1/(I_0J_0)}=B/A\), and get
\[
u(0)\bigl(v(0)I_0+v(1)I_1\bigr)
=w\Bigl(1+\frac{v(1)I_1}{I_0}\Bigr)
=\frac{A}{A+B}\Bigl(1+\frac{B}{A}\Bigr)=1.
\]
Analogously, \(v(1)J_0/J_1=\sqrt{I_0J_0/(I_1J_1)}=A/B\), so
\[
u(1)\bigl(v(0)J_1+v(1)J_0\bigr)
=(1-w)\Bigl(1+\frac{v(1)J_0}{J_1}\Bigr)
=\frac{B}{A+B}\Bigl(1+\frac{A}{B}\Bigr)=1.
\]

\textit{Column sums.}
From \(u(0)I_0=w\) and \(u(1)J_1=1-w\), we have
\[
v(0)\bigl(u(0)I_0+u(1)J_1\bigr)=w+(1-w)=1.
\]
Furthermore,
\[
\begin{aligned}
v(1)u(0)I_1
&=w\,\frac{v(1)I_1}{I_0}
 =w\,\frac{B}{A}
 =1-w,\\
v(1)u(1)J_0
&=(1-w)\,\frac{v(1)J_0}{J_1}
 =(1-w)\,\frac{A}{B}
 =w,
\end{aligned}
\]
so
\[
v(1)\bigl(u(0)I_1+u(1)J_0\bigr)=(1-w)+w=1.
\]
This establishes the row and column sum identities.

It remains to consider the entries. We have \(u(0)v(0)I_0=u(0)I_0=w\), and the
last statement gives \(u(1)v(1)J_0=w\). Similarly, the previous calculation gives
\(u(0)v(1)I_1=1-w\), while \(u(1)v(0)J_1=u(1)J_1=1-w\) by the definition of \(u(1)\). Therefore,
\[
\frac12\Bigl(u(0)v(0)I_0+u(1)v(1)J_0\Bigr)=w,
\]
as required.
\end{proof}

\section{Diagonally \texorpdfstring{$H$}{H}-invariant zero-cost couplings}\label{app:H-invariant-face}
\begin{lemma}[Classification of $H$-invariant zero-cost couplings]\label{lem:cantor-h-classification}
Let \(P\in\Pi(\mu,\mu)\) be invariant under diagonal translations
\[
\tau_s(x,y):=(x+s,y+s),\qquad s\in H,
\]
and assume that \(P\) is supported on the zero-cost set \(Z\). Then there exists
\(w\in[0,1]\) such that
\[
P=w\,P_0+(1-w)\,P_{t_0}.
\]
\end{lemma}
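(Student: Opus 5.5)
Since \(Z\) is the disjoint union of the two graphs \(\Gamma_0:=\{(x,x)\}\) and \(\Gamma_{t_0}:=\{(x,x+t_0)\}\), both closed (hence Borel), the plan is to decompose \(P=P|_{\Gamma_0}+P|_{\Gamma_{t_0}}\) and identify each piece separately. Each piece is determined by its first marginal: writing \(\rho_0:=(\pr_X)_\#(P|_{\Gamma_0})\) and \(\rho_1:=(\pr_X)_\#(P|_{\Gamma_{t_0}})\), we have \(P|_{\Gamma_0}=(\Id,\Id)_\#\rho_0\) and \(P|_{\Gamma_{t_0}}=(\Id,T_{t_0})_\#\rho_1\), because on each graph the second coordinate is a continuous function of the first. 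Moreover \(\rho_0+\rho_1=\mu\), so both \(\rho_i\ll\mu\) with densities \(g_0,g_1\ge0\), \(g_0+g_1=1\) \(\mu\)-a.e. The goal is to show \(g_0\) is \(\mu\)-a.e.\ constant, equal to some \(w\in[0,1]\); then \(P=wP_0+(1-w)P_{t_0}\) directly.

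The next step uses the diagonal \(H\)-invariance. Each \(\tau_s\) maps \(\Gamma_0\) to itself and \(\Gamma_{t_0}\) to itself (since \((y+s)-(x+s)=y-x\)), so \((\tau_s)_\#P=P\) gives \((\tau_s)_\#(P|_{\Gamma_0})=P|_{\Gamma_0}\), and projecting, \(\rho_0\) is invariant under \(x\mapsto x+s\) for all \(s\in H\). Since \(\mu\) is also translation invariant, this says \(g_0(x+s)=g_0(x)\) for \(\mu\)-a.e.\ \(x\), for every \(s\in H\). Lemma~\ref{lem:cantor-Haar-averaging}(iii) then gives \(g_0=\mathsf A g_0\) \(\mu\)-a.e., and by \eqref{eq:Aq-coset-formula} \(\mathsf A g_0\) is constant on each coset \(G_0,G_1\). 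Thus \(g_0=c_0\) on \(G_0\) and \(g_0=c_1\) on \(G_1\), a.e., with \(c_0,c_1\in[0,1]\).

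The main obstacle is forcing \(c_0=c_1\), since \(H\)-invariance alone only yields coset-wise constancy; here the second-marginal constraint must be used. The second marginal of \(P\) is \(\rho_0+(T_{t_0})_\#\rho_1\), and \(T_{t_0}\) swaps \(G_0\) and \(G_1\). Evaluating on \(G_0\): \(\rho_0(G_0)=c_0/2\) and \((T_{t_0})_\#\rho_1(G_0)=\rho_1(G_1)=(1-c_1)/2\). Requiring this to equal \(\mu(G_0)=1/2\) gives \(c_0+1-c_1=1\), i.e.\ \(c_0=c_1=:w\). (Evaluating on \(G_1\) gives the same condition.) Hence \(\rho_0=w\mu\) and \(\rho_1=(1-w)\mu\), which completes the identification \(P=wP_0+(1-w)P_{t_0}\).
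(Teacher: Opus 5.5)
Your proof is correct and follows the same route as the paper: a weight function recording the mass on the diagonal branch, its \(H\)-invariance, Lemma~\ref{lem:cantor-Haar-averaging} to make it constant on the cosets \(G_0,G_1\), and the second-marginal constraint to force the two constants to agree. The only difference is how the weight is obtained. You take it as the density \(g_0=\dd\rho_0/\dd\mu\) of the first marginal of \(P\) restricted to the diagonal graph, whereas the paper uses \(q(x)=P_x(\{x\})\) from a disintegration; your version avoids the uniqueness-of-disintegration step.
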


\begin{proof}[Proof of Lemma~\ref{lem:cantor-h-classification}]
We disintegrate $P$ with respect to its first marginal $\mu$. There exists a
$\mu$-a.e.\ unique measurable kernel \(x\mapsto P_x\in\mathcal P(G)\) such that
\[
P(\dd x,\dd y)=\mu(\dd x)\,P_x(\dd y).
\]
Because $P$ is supported on $Z$, we have
\[
P(Z^c)=0
\quad\Longrightarrow\quad
P_x\bigl(Z_x\bigr)=1\ \text{ for }\mu\text{-a.e. }x,
\qquad
Z_x:=\{y:(x,y)\in Z\}=\{x,x+t_0\}.
\]
It follows that, for $\mu$-a.e.\ $x$, there exists a number \(q(x)\in[0,1]\) such that
\begin{equation}\label{eq:cantor-hclass-kernel}
P_x=q(x)\,\delta_x+(1-q(x))\,\delta_{x+t_0}.
\end{equation}
We may choose the function \(q\) to be Borel measurable. Indeed, the diagonal
\(\{(x,y):y=x\}\subset G^2\) is closed, so \(\phi(x,y):=\mathbf 1_{\{y=x\}}\) is Borel, and
\[
q(x)=P_x(\{x\})=\int_G \phi(x,y)\,P_x(\dd y)
\]
is measurable as a parametrised integral.

We begin by establishing the property that $q$ is $H$-invariant in the $\mu$-a.e.\ sense. Let us fix $s\in H$, write \(\tau_s(x,y)=(x+s,y+s)\), \(T_s(z)=z+s\), and also set \(P^s:=(\tau_s)_\#P\). This implies that \((\pr_X)_\#P^s=\mu\), as \(\mu\) is
translation-invariant and \(\pr_X\circ\tau_s=T_s\circ\pr_X\). By direct computation, we obtain the following disintegration of \(P^s\) with respect to \(\mu\):
\[
P^s(\dd x,\dd y)=\mu(\dd x)\,Q_x(\dd y),
\qquad
Q_x:=(T_s)_\#P_{x-s}.
\]
Indeed, for Borel \(A,B\subset G\), we have
\[
\begin{aligned}
P^s(A\times B)
&=P((A-s)\times(B-s))                                      \\
&=\int_{A-s}P_x(B-s)\,\mu(\dd x)                            \\
&=\int_A P_{x-s}(B-s)\,\mu(\dd x)                            \\
&=\int_A Q_x(B)\,\mu(\dd x),
\end{aligned}
\]
where in third equality, we applied the translation invariance of \(\mu\).

By assumption, \(P\) is \(\tau_s\)-invariant, so \(P^s=P\). By \(\mu\)-a.e.\
uniqueness of disintegration, \(Q_x=P_x\) for \(\mu\)-a.e.\ \(x\), that is,
\[
P_x=(T_s)_\#P_{x-s}\quad\text{for }\mu\text{-a.e. }x,
\qquad\text{equivalently}\qquad
P_{x+s}=(T_s)_\#P_x\quad\text{for }\mu\text{-a.e. }x.
\]
Let \(N\subset G\) be a \(\mu\)-null set outside which \eqref{eq:cantor-hclass-kernel} holds. Due to the fact that \(\mu\) is translation-invariant, \(N-s\) is also \(\mu\)-null. Let us intersect the full-measure set on which \(P_{x+s}=(T_s)_\#P_x\) holds with \(N^c\cap(N-s)^c\). Observe that on this set, we may evaluate both kernels by \eqref{eq:cantor-hclass-kernel}. Hence, for
\(\mu\)-a.e. \(x\),
\[
q(x+s)=P_{x+s}(\{x+s\})
=((T_s)_\#P_x)(\{x+s\})
=P_x(\{x\})
=q(x).
\]
It follows that \(q(\cdot+s)=q(\cdot)\) \(\mu\)-a.e. for every fixed \(s\in H\).

We now apply Lemma~\ref{lem:cantor-Haar-averaging} to \(q\) and set
\(\bar q:=\mathsf A q\). This implies that \(\bar q\) is pointwise \(H\)-invariant and
\(\bar q=q\) \(\mu\)-a.e. One useful consequence of this fact is that replacing \(q\) by \(\bar q\) in \eqref{eq:cantor-hclass-kernel} does not change \(P\). Recall the clopen cosets \(G_0,G_1\). If \(x,x'\in G_i\), then \(x'-x\in H\), so by pointwise \(H\)-invariance, we obtain \(\bar q(x')=\bar q(x)\). It follows that \(\bar q\) is constant on each \(G_i\). We will write these constants as \(q_0,q_1\in[0,1]\). We now show that the second marginal necessarily implies $q_0=q_1$.  Indeed, under \eqref{eq:cantor-hclass-kernel}, if \(x_1=0\), then \(y=x\) has \(y_1=0\), while \(y=x+t_0\) has \(y_1=1\). On the other hand, if \(x_1=1\), then \(y=x\) has \(y_1=1\), while \(y=x+t_0\) has \(y_1=0\). We conclude that
\[
P(\{y_1=0\})
=\int_G P_x(\{y_1=0\})\,\mu(\dd x)
=\frac12\,q_0+\frac12\,(1-q_1).
\]
Since \((\pr_Y)_\#P=\mu\) and \(\mu(\{y_1=0\})=\frac12\), this gives precisely
\(q_0=q_1=:w\). It remains to identify $P$. To this end, let \(\varphi\) be bounded measurable on \(G^2\). By \eqref{eq:cantor-hclass-kernel} with \(q\equiv w\) on both cosets, we have
\[
\begin{aligned}
\int_{G^2}\varphi\,\dd P
&=\int_G\Big(w\,\varphi(x,x)+(1-w)\,\varphi(x,x+t_0)\Big)\,\mu(\dd x) \\
&=w\int_G\varphi(x,x)\,\mu(\dd x)
 +(1-w)\int_G\varphi(x,x+t_0)\,\mu(\dd x),
\end{aligned}
\]
This equals \(\int\varphi\,\dd\bigl(wP_0+(1-w)P_{t_0}\bigr)\), which implies
\(P=wP_0+(1-w)P_{t_0}\).
\end{proof}

\bibliographystyle{plain}
\bibliography{counterexample_limit3_refs}

\end{document}